\documentclass[a4paper]{article}
%{amsart}

\usepackage{amsmath, amssymb, amsthm}
\usepackage{lineno,hyperref}
\usepackage{enumerate}
%\modulolinenumbers[5]

\newtheorem{thm}{Theorem}[section]

\newtheorem{cor}[thm]{Corollary}

\newtheorem{prop}[thm]{Proposition}
\newtheorem{lem}{Lemma}[section]
\newtheorem{rem}{Remark}[section]
\newtheorem{Def}{Definition}[section]

%%%%%%%%%%%%%%%%%%%%%%%

\allowdisplaybreaks

\begin{document}
	
		\title{Globality of the DPW construction for Smyth potentials in the case of \({\rm SU}_{1,1} \)}
		\author{Tadashi Udagawa}
	    \date{}
	    \maketitle
		
		\begin{abstract}
		We construct harmonic maps into \({\rm SU}_{1,1}/{\rm U}_1 \) starting from Smyth potentials \(\xi \), by the DPW method. In this method, harmonic maps are obtained from the Iwasawa factorization of a solution \(L \) of \(L^{-1} dL = \xi \). However, the Iwasawa factorization in the case of a noncompact group is not always global. We show that \(L \) can be expressed in terms of Bessel functions and from the asymptotic expansion of Bessel functions we solve a Riemann-Hilbert problem to give a global Iwasawa factorization. In this way we give a more direct proof of the globality of our solution than in the work of Dorfmeister-Guest-Rossman \cite{DGR2010}, while avoiding the general isomonodromy theory used by Guest-Its-Lin \cite{GIL20151}, \cite{GIL20152}.
		\end{abstract}
		\vspace{10pt}

    {\flushleft{{\it Keywords:} Iwasawa factorization, DPW method, sinh-Gordon equation, Bessel function}}

%	\linenumbers
	
	\section{Introduction}
	The DPW method (the generalized Weierestrass representation) is a way to construct harmonic maps into symmetric spaces, that was developed by J. Dorfmeister, F. Pedit, H. Wu (see \cite{DPW1998}). The Gauss map of a constant mean curvature (CMC) surface in \(\mathbb{R}^3 \) or \(\mathbb{R}^{2,1} \) is a harmonic map, thus the DPW method is useful for studying constant mean curvature (CMC)-surfaces. The Riemann metric of a CMC-surface satisfies the classical Gauss-Codazzi equation and then the DPW method is also useful for studying certain nonlinear differential equations. In the DPW method, a CMC-surface corresponds to a matrix-valued 1-form and we call this 1-form the holomorphic potential. The properties of CMC-surfaces and Riemann metrics such as symmetry can be expressed as conditions on the holomorphic potential. \vspace{3mm}
	
	The studying of CMC-surfaces with symmetry is an interesting topic and in terms of surface theory there are well-known examples such as the Delaunay surface \cite{E1993} and the Smyth surface \cite{S1993}. 
	In terms of the DPW method, there are many articles about the Delaunay surface \cite{DH1998}, \cite{DH2000}, \cite{K2004}, \cite{SKKR2007} and the Smyth surface \cite{BI1995}, \cite{O2017}, \cite{TPF1994}. In this paper, we investigate the Smyth potential
	\begin{equation}
		\xi = \frac{1}{\lambda} \left( \begin{array}{cc}
			0 & z^{k_0 } \\
			z^{k_1 } & 0
		\end{array} \right)dz,\ \ \ k_0, k_1 \in \mathbb{Z}_{\ge -1 }, \nonumber
	\end{equation}
	on \( \mathbb{C}^* \). The Smyth potential satisfies the homogeneity condition \(\xi(ze^{i a},\lambda e^{ib} ) = T^{-1} \xi(z,\lambda) T \), where \(a,b \in \mathbb{R} \) and \(T \in {\rm U}_1 \subset {\rm SU}_2 \). For the case \(k_0 = 0,\ k_1 \in \mathbb{N} \), the Smyth potential gives the \((k_1+2) \)-legged Smyth surface in \(\mathbb{R}^3 \), which has reflective symmetry with respect to \(k_1 + 2 \) geodesic plane and the metric is independent of \(\theta \) in \(z = re^{i\theta } \). This symmetry is obtained from the homogeneity condition. For the case \(k_1 = -1 \), the homogeneity condition can be interpreted from the perspective of quantum cohomology (see \cite{DGR2010} for details). We mainly discuss this case, following Dorfmeister, M. Guest, W. Rossman in \cite{DGR2010}, in order to give a more direct argument without relying on results on Painlev\'e equations. The ``generic case'' where \(k_0,k_1 \in \mathbb{Z}_{\ge 0} \) is easier, and can be treated by the same method. \vspace{3mm}
	
	For \(k_0, k_1 \in \mathbb{Z}_{\ge -1} \), we consider the Smyth potential corresponding to a CMC surface in the Minkowski space \(\mathbb{R}^{2,1 } \), then the Riemann metric satisfies the sinh-Gordon equation
	\begin{equation}
		u_{t \overline{t } } = e^{2u } - e^{-2u },\ \ u = u(|t|) : U \rightarrow \mathbb{R}, \nonumber
	\end{equation}
	where \(t = \frac{2}{k_0 + k_1 + 2 } z^{\frac{k_0 + k_1 + 2 }{2 } } \) and \(U \subset \mathbb{C}^* \) is an open subset (see section 2 of \cite{DGR2010}). \vspace{3mm}
	
	In the DPW method, we need the Iwasawa factorization of a certain matrix-valued function obtained from the holomorphic potential. In the case of \(\mathbb{R}^3 \) the Iwasawa factorization is always globally-defined on the whole domain. However, in the case of \(\mathbb{R}^{2,1} \) the Iwasawa factorization is not always globally-defined.
	Our goal in this paper is to give a suitable dressing transformation after which we have a global Iwasawa factorization. In this paper, we transform the scalar equation which corresponds to the Smyth potential to the Bessel equation. The properties of the Bessel functions are well-known and we can use them to investigate the matrix-valued function which corresponds to the Smyth potential. In particular, we use the asymptotic expansion of the Bessel functions and prove the existence of the global Iwasawa factorization. As a corollary, we also show the existence of the global solution of the sinh-Gordon equation. \vspace{3mm}
	
	The existence of the global Iwasawa factorization was proved in \cite{DGR2010}, \cite{GIL2020}, but the proof was not obtained from the DPW method directly. In \cite{DGR2010}, the authors proved the existence of the Iwasawa factorization near \(z = 0 \) by using the DPW method, then appealed to results of \cite{FIKN2006}, \cite{IN2011}. In \cite{GIL20151}, \cite{GIL20152}, \cite{GIL2020}, the authors gave a more detailed proof (and a more general result) by combining isomonodromy theory and p.d.e. theory. On the other hand, in this paper we use the DPW method and properties of Bessel functions. We do not need results for p.d.e. theory or isomonodromic deformation theory. Thus, our proof is more direct than \cite{DGR2010} and shorter than \cite{GIL20151}, \cite{GIL20152}, \cite{GIL2020}. \vspace{3mm}
	
	In terms of physics, the relation between \(N = 2 \) supersymmetric quantum field theories and the tt*-Toda equation was introduced by S. Cecotti and C. Vafa (see \cite{CV1991}). Since the sinh-Gordon equation is a special case of tt*-Toda equation, Smyth potentials have some interpretation from physics. In particular, for the case \(k_0 \in \mathbb{Z}_{\ge 0 },\ k_1 = -1 \) the CMC-surface in \(\mathbb{R}^{2,1 } \) obtained from the Smyth potential is related to the quantum cohomology of \(\mathbb{C} P^1 \) i.e. the ordinary differential operator \((\lambda q \partial_q )^2 - q \), which gives the quantum differential equation, can be identified with the Maurer-Cartan form of the resulting surface by a suitable gauging (\cite{DGR2010}). For the general case, the ``magical solution'' of tt*-Toda equation predicted by Ceccoti and Vafa is expected to be globally-defined on \(\mathbb{C}^* \). \vspace{3mm}
	
	This paper is organized as follows: In section 2, we define loop groups and review the DPW method. We review the relation between holomorphic potentials and the surface data of the resulting surfaces. In section 3, we consider a canonical solution \(L \) obtained by solving
	\begin{equation}
		L^{-1 } dL = \frac{1}{\lambda} \left(
		\begin{array}{cc}
			0 & z^{k_0} \\
			z^{k_1} & 0 
		\end{array}
		\right)dz\ \ {\rm on}\ \mathbb{C}^*, \nonumber
	\end{equation}
	where \(k_1 = -1\) i.e. the quantum cohomology case. We follow \cite{DGR2010} and choose a suitable dressing action \(\gamma_0 \) such that the Iwasawa factorization of \(\phi = \gamma_0 L \) is defined near \(z = 0 \). In section 4, we investigate the asymptotic behaviour of \(\phi \) at \(z = \infty \) (to be precise, at \(\lambda^{-1 } z^2 = \infty \)) from the asymptotic expansion of the Bessel functions. In section 5, we consider the Birkhoff factorization of
	\begin{equation}
		g(z,\lambda) = \overline{\phi(z,\overline{\lambda }^{-1} ) }^t \left( \begin{array}{cc}
			1 & 0 \\
			0 & -1
		\end{array} \right) \phi(z,\lambda), \nonumber
	\end{equation}
	which is equivalent to the Iwasawa factorization of \(\phi \). The Birkhoff factorization can be formulated as a Riemann-Hilbert problem, thus we can use results for the Riemann-Hilbert problem. We combine these results and the asymptotic expansion that we proved in section 4 and prove the existence of the global Birkhoff factorization. Here we make use of the ``Vanishing Lemma'' for Riemann-Hilbert problems (Corollary 3.2 of \cite{FIKN2006}). As a corollary, we obtain the global solution of the sinh-Gordon equation. In this paper, we give calculations only in the case \(k_0 = 3, k_1 = -1 \) for the reason that the case \(k_0 \in \mathbb{Z}_{\ge 0}, k_0 = -1 \) can be transformed to the case where \(k_0 \) is fixed, and in the case \(k_0 = 3 \) our calculations become easier. In \cite{DGR2010}, the case \(k_0=0 \) was chosen.
	
	\section{Spacelike CMC surfaces in \(\mathbb{R}^{2,1} \) }
	In this section, we review the DPW method and the construction of spacelike CMC surfaces in \(\mathbb{R}^{2,1 } \) from \(\mathfrak{sl}_2 \mathbb{C} \)-valued 1-forms.
	\subsection{Loop groups}
	First, we define the loop groups, and their loop algebras. Let
	\begin{equation} 
		I = \{\lambda \in \mathbb{C} \ : | \lambda |< 1 \}, \nonumber
	\end{equation}
	\begin{equation}
		{\rm SU}_{1,1} = \left\{g \in {\rm SL}_2 \mathbb{C}\ : \ g \left(\begin{array}{cc}
			1 & 0 \\
			0 & -1
		\end{array} \right)\overline{g}^t = \left(\begin{array}{cc}
			1 & 0 \\
			0 & -1
		\end{array} \right) \right\}, \nonumber
	\end{equation}
	and we define a map \(\sigma : {\rm M}_2 \mathbb{C} \rightarrow {\rm M}_2 \mathbb{C} \) by
	\begin{equation}
		\bf{\sigma}(\gamma) =  
		\left(
		\begin{array}{cc}
			1 & 0 \\ 0 & -1
		\end{array}
		\right)
		\gamma
		\left(
		\begin{array}{cc}
			1 & 0 \\ 0 & -1
		\end{array}
		\right). \nonumber
	\end{equation}
	The loop group of \(\rm SL_2\mathbb{C} \) is
	\begin{equation}
		(\Lambda \rm SL_2 \mathbb{C})_{\sigma} = \{ \gamma = \gamma(\lambda) \in \it C^{\infty}(S^1, \rm SL_2 \mathbb{C}) : \gamma(-\lambda) = \sigma(\gamma(\lambda)) \}. \nonumber
	\end{equation}
	The Lie algebra of the loop group is
	\begin{equation}
		(\Lambda \rm \mathfrak{sl}_2 \mathbb{C})_{\sigma} = \{ A = A(\lambda) \in \it C^{\infty}(S^1, \mathfrak{sl_2} \mathbb{C}):  A(-\lambda) = \sigma(A(\lambda)) \}. \nonumber
	\end{equation}
	We denote other loop groups by
	\begin{equation}
		(\Lambda^+ \rm SL_2 \mathbb{C})_{\sigma } = \left\{ \gamma \in  (\Lambda \rm SL_2 \mathbb{C})_{\sigma}:
		\begin{gathered}
			\gamma \ \text{can be extended holomorphically } \\ \text{to } \  \gamma : I \rightarrow {\rm SL_2 \mathbb{C} }  
		\end{gathered}\right\}.\nonumber
	\end{equation}
	\begin{equation}
		(\Lambda^- \rm SL_2 \mathbb{C})_{\sigma } = \left\{ \gamma \in  (\Lambda {}\rm SL_2 \mathbb{C})_{\sigma}:
		\begin{gathered}
			\gamma \ \text{can be extended holomorphically } \\ \text{to } \  \gamma : \mathbb{C}P^1 - I \rightarrow {\rm SL_2 \mathbb{C} }  
		\end{gathered} \right\}.\nonumber
	\end{equation}
	\begin{equation}
		(\Lambda^{+,>0} \rm SL_2 \mathbb{C})_{\sigma } = \left\{ \gamma \in  (\Lambda^+ \rm SL_2 \mathbb{C})_{\sigma }: \gamma|_{\lambda=0} = \left(
		\begin{array}{cc}
			\rho & 0 \\
			0 & \rho^{-1}
		\end{array}
		\right) \ {\rm for\ some}\ \rho > 0
		\right\}.\nonumber
	\end{equation}
	\begin{equation}
		(\Lambda^{- * } \rm SL_2 \mathbb{C})_{\sigma } = \left\{ \gamma \in  (\Lambda^- \rm SL_2 \mathbb{C})_{\sigma}: \gamma|_{\lambda = \infty } = Id \right\}, \nonumber
	\end{equation}
	\begin{equation}
		(\Lambda {\rm SU_{1,1}})_{\sigma}  = \left\{g \in (\Lambda {\rm SL}_2 \mathbb{C})_{\sigma}\ : \ g(\lambda) \left(\begin{array}{cc}
			1 & 0 \\
			0 & -1
		\end{array} \right)\overline{g(\overline{\lambda^{-1}})}^t = \left(\begin{array}{cc}
			1 & 0 \\
			0 & -1
		\end{array} \right) \right\}, \nonumber
	\end{equation}
	\begin{rem}
		For a complex semisimple Lie group \(G^{\mathbb{C}}\), the loop group \(\Lambda G^{\mathbb{C}}\) is the Banach space of maps from \(S^1\) to \(G^{\mathbb{C}}\) with some \(H^s\)-topology, \(s>1/2\) (see detail in \cite{PS1986}).
	\end{rem}
	In this paper, we use the following splitting theorems:
	\begin{thm}[The Iwasawa factorization for \((\Lambda {\rm SU}_{1,1})_{\sigma }  \), \cite{BD2001}, \cite{BRS2010}]
		There exists an open dense subset \(\mathcal{U} \subset (\Lambda {\rm SL}_2 \mathbb{C} )_{\sigma} \) such that the multiplication 
		\begin{equation}
			((\Lambda {\rm SU}_{1,1})_{\sigma} \sqcup ((\Lambda {\rm SU}_{1,1} )_{\sigma} \cdot w ) ) \times (\Lambda^{+,>0} \rm{SL}_2 \mathbb{C})_{\sigma} \rightarrow \mathcal{U}, \nonumber
		\end{equation}
		is a real-analytic bijective diffeomorphism with respect to the natural smooth manifold structure, where
		\begin{equation}
			w = \left(
			\begin{array}{cc}
				0 & \lambda \\
				-\lambda^{-1} & 0
			\end{array}
			\right). \nonumber
		\end{equation}
		The unique splitting of an element \( \phi \in \mathcal{U} \),
		\begin{equation} \phi = F\cdot B, \nonumber
		\end{equation}
		with \(F \in (\Lambda {\rm SU}_{1,1})_{ \sigma} \sqcup ( (\Lambda {\rm SU}_{1,1} )_{\sigma} \cdot w ) \) and \( B \in (\Lambda^{+,>0} \rm{SL}_2 \mathbb{C})_{\sigma}   \), will be called the Iwasawa factorization.
	\end{thm}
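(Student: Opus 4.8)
The plan is to convert the two-component Iwasawa splitting into a single Birkhoff-type (Riemann--Hilbert) factorization of an associated ``squared'' loop, to see the two components arise as a choice of sign governed by the $w$-ambiguity, and then to import existence, uniqueness and real-analyticity from the known loop-group factorization theory.

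First I would attach to each $\phi\in(\Lambda\mathrm{SL}_2\mathbb{C})_\sigma$ the loop $g(\lambda):=\overline{\phi(\bar\lambda^{-1})}^{\,t}\,\sigma_3\,\phi(\lambda)$, where $\sigma_3:=\mathrm{diag}(1,-1)$, and write $\psi^\sharp(\lambda):=\overline{\psi(\bar\lambda^{-1})}^{\,t}$. One checks that $g^\sharp=g$, that $\det g\equiv-1$, and that $g$ is $\sigma$-twisted. The elementary observation --- the same one underlying Section 5 --- is that $\phi=FB$ with $F\in(\Lambda\mathrm{SU}_{1,1})_\sigma$ and $B\in(\Lambda^{+,>0}\mathrm{SL}_2\mathbb{C})_\sigma$ holds if and only if $g=B^\sharp\sigma_3 B$, while $\phi=FB$ with $F\in(\Lambda\mathrm{SU}_{1,1})_\sigma\cdot w$ holds if and only if $g=-B^\sharp\sigma_3 B$; the sign comes from the one-line computation $w^\sharp\sigma_3 w=-\sigma_3$, combined with the fact that on $S^1$ an $\mathrm{SU}_{1,1}$-loop $\tilde F$ satisfies $\tilde F^\sharp\sigma_3\tilde F=\sigma_3$ (write $F=\tilde F w$ in the second case). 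Hence the asserted Iwasawa factorization is equivalent to factoring the self-adjoint twisted loop $g$ in the form $\varepsilon\,B^\sharp\sigma_3 B$ with $\varepsilon\in\{+1,-1\}$ and $B$ normalized so that $B|_{\lambda=0}=\mathrm{diag}(\rho,\rho^{-1})$, $\rho>0$; the two values of $\varepsilon$ are the two components of the Iwasawa splitting, the $\varepsilon=-1$ one being the $w$-translate.

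To produce such a factorization I would invoke the Birkhoff factorization in the loop group (equivalently, the cell structure of the associated restricted Grassmannian): for $\phi$ in a suitable open dense set $\mathcal{U}$, $g$ admits a factorization $g=g_-g_+$ with $g_+$ holomorphic on $I$ and $g_-$ holomorphic on $\mathbb{C}P^1\setminus I$. Using $g^\sharp=g$ and the uniqueness of this factorization up to a constant, one gets $g=g_+^\sharp\,M\,g_+$ for a constant Hermitian $M$, necessarily of signature $(1,1)$ since $\det g=-1$. The remaining freedom (the choice of $g_+$ up to a constant) must then be spent simultaneously to straighten $M$ and to push $B:=(\text{normalized }g_+)$ into $(\Lambda^{+,>0}\mathrm{SL}_2\mathbb{C})_\sigma$; this is exactly where the two admissible normal forms $\sigma_3$ and $-\sigma_3$ --- i.e., the two cells --- appear, with the locus where neither normalization succeeds forming a thin set. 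This is precisely the content of the non-compact loop-group Iwasawa decomposition developed in \cite{BD2001}, \cite{BRS2010}, where it is also shown that the two cells are open and disjoint, that their union is dense, and that the splitting maps are real-analytic: the last point follows because the block projections realizing the Birkhoff factorization on $L^2(S^1,\mathbb{C}^2)$ depend real-analytically on the loop, so one concludes by the implicit function theorem on the Banach completions of the loop groups. Uniqueness is then the standard argument: $F_1B_1=F_2B_2$ forces $F_2^{-1}F_1=B_2B_1^{-1}$ to lie in the intersection of the ``unitary'' part with $(\Lambda^{+,>0}\mathrm{SL}_2\mathbb{C})_\sigma$, which is trivial once $\rho>0$ and the twisting are taken into account, and the sign computation $w^\sharp\sigma_3 w=-\sigma_3$ shows the two components cannot overlap.

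The step I expect to be the main obstacle is this cell analysis: showing that imposing the $\mathrm{SU}_{1,1}$-reality condition cuts the loop group down to exactly two open Iwasawa cells (no more), that their union is dense, and that the ``wall'' separating them is nowhere dense. Everything else --- the algebraic equivalence of the first step, the computation $w^\sharp\sigma_3 w=-\sigma_3$, the uniqueness, and the transfer of real-analyticity from the Birkhoff factorization --- is routine once this structural input is in hand.
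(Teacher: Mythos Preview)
The paper does not prove this theorem: it is stated as background and attributed to \cite{BD2001}, \cite{BRS2010}, with no proof given. So there is no ``paper's own proof'' to compare against.

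That said, your sketch is essentially the standard route taken in those references, and it dovetails with how the paper itself uses the result later on: the reduction you describe --- passing from an Iwasawa splitting of $\phi$ to a Birkhoff-type splitting of $g(\lambda)=\overline{\phi(\bar\lambda^{-1})}^{t}\sigma_3\phi(\lambda)$, with the sign $\varepsilon=\pm1$ distinguishing the identity cell from the $w$-cell --- is exactly Proposition~5.1 of the paper, stated there for the specific $\phi$ arising from the Smyth potential rather than in full generality. Your identification of the crux (that the $\mathrm{SU}_{1,1}$ reality condition produces precisely two open cells whose union is dense, and that the Birkhoff-side projections are real-analytic on the Banach completion) is accurate: this is the substantive content of \cite{BD2001} and \cite{BRS2010}, and you are right that the rest is bookkeeping once that structural fact is in hand. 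In short, your proposal is a faithful outline of the cited proofs, and correctly flags the one genuinely non-routine step; but since the paper imports the theorem wholesale, there is nothing further to compare.
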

	\begin{thm}[The Birkhoff factorization for \((\Lambda {\rm SL}_2 \mathbb{C} )_{\sigma }  \), \cite{PS1986}]
		There exists an open dense subset \(\mathcal{V} \subset (\Lambda {\rm SL}_2 \mathbb{C} )_{\sigma} \) such that the multiplication 
		\begin{equation}
			(\Lambda^{- * } \rm{SL}_2 \mathbb{C})_{\sigma} \times (\Lambda^+ \rm{SL}_2 \mathbb{C})_{\sigma} \rightarrow \mathcal{V}, \nonumber
		\end{equation}
		is a complex-analytic bijective diffeomorphism. The splitting of an element \( \phi \in \mathcal{V} \),
		\begin{equation} \phi = \phi_- \cdot \phi_+, \nonumber
		\end{equation}
		with \(\phi_- \in (\Lambda^{- * } {\rm SL}_2 \mathbb{C} )_{ \sigma} \) and \( \phi_+ \in (\Lambda^+ \rm{SL}_2 \mathbb{C})_{\sigma}   \), will be called the Birkhoff factorization.
	\end{thm}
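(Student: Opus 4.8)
The plan is to obtain this statement as a restriction of the classical Birkhoff decomposition of \(\Lambda {\rm GL}_2\mathbb{C}\) established in \cite{PS1986}, first cutting down to \({\rm SL}_2\mathbb{C}\) and then descending to the \(\sigma\)-fixed subgroup. Recall from \cite{PS1986} that every \(\phi \in \Lambda {\rm GL}_2\mathbb{C}\) admits a factorization \(\phi = \phi_-\,\lambda^{D}\,\phi_+\) with \(D = \mathrm{diag}(d_1,d_2)\), \(d_1 \ge d_2\), where \(\phi_-\) extends holomorphically to \(\mathbb{C}P^1 - I\) with \(\phi_-|_{\lambda=\infty}=Id\) and \(\phi_+\) extends holomorphically to \(I\); the ``big cell'' \(\mathcal{V}_0\) on which \(D=0\) is open and dense, and on it the factorization \(\phi = \phi_-\phi_+\) is unique and depends complex-analytically on \(\phi\) by the implicit function theorem in the loop group (a Banach, resp.\ Fr\'echet, manifold). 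Uniqueness is the usual Liouville argument: if \(\phi_-\phi_+ = \psi_-\psi_+\) then \(\psi_-^{-1}\phi_- = \psi_+\phi_+^{-1}\) defines a holomorphic map \(\mathbb{C}P^1 \to {\rm GL}_2\mathbb{C}\), so each matrix entry is constant, and the normalization at \(\lambda=\infty\) forces this constant to be \(Id\); this already gives injectivity of the multiplication map on all of \((\Lambda^{-*}{\rm SL}_2\mathbb{C})_\sigma \times (\Lambda^+{\rm SL}_2\mathbb{C})_\sigma\), not only over the big cell.

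Next I would restrict to \({\rm SL}_2\mathbb{C}\). If \(\det\phi \equiv 1\) and \(\phi = \phi_-\lambda^D\phi_+\), then \(\det\phi_-\cdot\lambda^{d_1+d_2}\cdot\det\phi_+ \equiv 1\); since \((\det\phi_-)^{-1}\) is holomorphic and nonvanishing on \(\mathbb{C}P^1 - I\) with value \(1\) at \(\infty\), and agrees on \(S^1\) with \(\lambda^{d_1+d_2}\det\phi_+\), which is holomorphic on \(I\) away from \(0\), these glue to a rational function on \(\mathbb{C}P^1\) whose only possible zero or pole is at \(\lambda=0\) and whose value at \(\infty\) is \(1\); nonvanishing on \(\mathbb{C}P^1 - I\) together with \(d_1 \ge d_2\) then forces \(d_1+d_2 = 0\) and \(\det\phi_\pm \equiv 1\), so the \({\rm SL}_2\) big cell is \(\mathcal{V}_0 \cap \Lambda{\rm SL}_2\mathbb{C}\) and there \(\phi_\pm\) are \({\rm SL}_2\mathbb{C}\)-valued. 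To descend to the \(\sigma\)-fixed subgroup, take \(\phi \in (\Lambda {\rm SL}_2\mathbb{C})_\sigma \cap \mathcal{V}_0\) with \(\phi = \phi_-\phi_+\): the loops \(\lambda \mapsto \sigma(\phi_-(\lambda))\) and \(\lambda \mapsto \sigma(\phi_+(\lambda))\) again lie in \((\Lambda^{-*}{\rm SL}_2\mathbb{C})\) and \((\Lambda^+{\rm SL}_2\mathbb{C})\) respectively, and their product equals \(\sigma(\phi(\lambda)) = \phi(-\lambda) = \phi_-(-\lambda)\phi_+(-\lambda)\), while \(\phi_\pm(-\,\cdot\,)\) also lie in the respective subgroups; uniqueness of the Birkhoff factorization of the loop \(\lambda \mapsto \phi(-\lambda)\) therefore yields \(\phi_\pm(-\lambda) = \sigma(\phi_\pm(\lambda))\), i.e.\ \(\phi_\pm \in (\Lambda^{\pm}{\rm SL}_2\mathbb{C})_\sigma\). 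Hence the multiplication map restricts to a bijection \((\Lambda^{-*}{\rm SL}_2\mathbb{C})_\sigma \times (\Lambda^+{\rm SL}_2\mathbb{C})_\sigma \to \mathcal{V} := \mathcal{V}_0 \cap (\Lambda{\rm SL}_2\mathbb{C})_\sigma\), with complex-analytic inverse obtained by restricting the one from \cite{PS1986}.

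Openness of \(\mathcal{V}\) in \((\Lambda{\rm SL}_2\mathbb{C})_\sigma\) is immediate from that of \(\mathcal{V}_0\). The step that requires care — and the one I expect to be the main obstacle — is density: the complement of \(\mathcal{V}_0\) is a countable union of the lower cells indexed by \(D \ne 0\), each a locally closed submanifold of positive finite codimension, and one must verify that the fixed-point set of the involution \(\gamma \mapsto \sigma(\gamma(-\,\cdot\,))\) — which preserves each lower cell as well as the big cell — still meets each lower cell in a set of positive codimension, so that a Baire category argument gives density of \(\mathcal{V}\) in \((\Lambda{\rm SL}_2\mathbb{C})_\sigma\). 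Everything else is a routine transcription of \cite{PS1986}.
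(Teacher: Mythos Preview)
The paper does not supply its own proof of this theorem: it is stated as background with a citation to \cite{PS1986}, and the text moves on immediately to a remark. There is therefore no ``paper's proof'' to compare against.

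Your sketch is a sound outline of how one derives the twisted statement from the untwisted Pressley--Segal result. The Liouville uniqueness argument, the reduction from \({\rm GL}_2\) to \({\rm SL}_2\) via the scalar Birkhoff decomposition of \(\det\phi\), and the descent to the \(\sigma\)-fixed subgroup by applying uniqueness to the two factorizations \(\sigma(\phi_-)\sigma(\phi_+)\) and \(\phi_-(-\,\cdot\,)\phi_+(-\,\cdot\,)\) of the same loop are all correct. You are also right to single out density as the non-routine step: an involution-invariant open dense set need not meet the fixed-point set densely, so one cannot simply intersect. The standard way around this (for an \emph{inner} twist like \(\sigma = {\rm Ad}\,\mathrm{diag}(1,-1)\)) is either to invoke the explicit isomorphism between the twisted and untwisted loop groups and transport the cell decomposition, or to run the Birkhoff/Bruhat argument directly in \((\Lambda{\rm SL}_2\mathbb{C})_\sigma\) and observe that the lower cells there are still finite-codimensional orbits; either route is what \cite{PS1986} effectively supplies.
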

	\begin{rem}
		If we replace \((\Lambda^{- * } \rm{SL}_2 \mathbb{C})_{\sigma} \) by \((\Lambda^- \rm{SL}_2 \mathbb{C})_{\sigma} \), then the splitting is not unique, but in this paper we also call this splitting the Birkhoff factorization. A similar splitting of \((\Lambda {\rm GL}_2 \mathbb{C} )_{\sigma } \) can be obtained.
	\end{rem}
	
	\subsection{The DPW method for spacelike CMC- surfaces in \(\mathbb{R}^{2,1} \)}
	Let \(\Sigma \) be a Riemann surface with local coordinate system \(z\) and
	\begin{align}
		\xi = \frac{1}{\lambda} \left(\begin{array}{cc}
			0 & p \\
			q & 0
		\end{array} \right)dz + \sum_{j \ge 0} \xi_j \lambda^j dz \in (\Lambda \frak{sl}_2 \mathbb{C})_{\sigma}, \nonumber
	\end{align}
	where \(p,q \) are holomorphic in \(z \) and \(p \neq 0 \). We call \(\xi \) a holomorphic potential. \vspace{2mm} \\
	In the DPW method, we construct CMC-surfaces from holomorphic potentials as follows: 
	\begin{enumerate}[(1)]
		\item First, we assume that \(\Sigma\) is simply-connected. Given a holomorphic potential \(\xi \), we solve 
		\begin{equation}
			d\phi = \phi \xi, \ \ \phi(z_0)= \phi_0, \label{eqd}
		\end{equation}
		for some base point \(z_0\) and \(\phi_0 \in (\Lambda {\rm SL}_2 \mathbb{C})_{\sigma} \). Since \(\Sigma\) is simply-connected, by the local existence and uniqueness of ordinary differential equations there exists a matrix solution of this equation globally defined on \(\Sigma \). 
		\vspace{2mm}
		\item Then we can split \(\phi \in (\Lambda {\rm SL}_2 \mathbb{C})_{\sigma} \) via Iwasawa factorization into a product
		\begin{equation}
			\phi = F\cdot B\ \ \ {\rm or }\ \ \ F \cdot w \cdot B,\nonumber
		\end{equation}
		on some open neighbourhood \(U \subset \Sigma \) of \(z = z_0 \), where \(F \in (\Lambda {\rm SU }_{1,1})_{\sigma}\), \(B \in (\Lambda^{+,>0} {\rm SL}_2 \mathbb{C})_{\sigma} \). 
		\vspace{2mm}
		\item  Finally, we compute the Sym-Bobenko formula
		\begin{equation}
			f(z,\overline{z},\lambda) = \frac{-i}{2H} \left[\lambda (\partial_\lambda F)F^{-1} + \frac{1}{2}F \left( \begin{array} {cc}
				1 & 0\\
				0 & -1
			\end{array} \right)
			F^{-1} \right], \label{sym}
		\end{equation}
		where \(H \in \mathbb{R}, H \neq 0 \). Then we can see that \(f=f(z,\overline{z},\lambda): U \rightarrow \mathbb{R}^{2,1} \) is a conformal immersion with constant mean curvature \(H \) for all \(\lambda \in S^1 \) (see Theorem 3.8. of \cite{BRS2010}).
	\end{enumerate}
	
	\section{The DPW construction for Smyth potentials near \(z = 0 \) }
	From the argument in section 4 of \cite{GL2014}, a holomorphic potential on \(\mathbb{C}^* \) of the form
	\begin{equation}
		\xi = \frac{1}{\lambda} \left(
		\begin{array}{cc}
			0 & z^{k_0} \\
			z^{k_1} & 0
		\end{array}
		\right)dz, \ \ \ {\rm where}\ k_0, k_1 \in \mathbb{Z}_{ \ge -1}, \nonumber
	\end{equation}
	gives a local solution of the sinh-Gordon equation.
	
	In this paper, we consider the quantum cohomology case \(k_1 = -1 \). In this case, we can not expect \(\phi(0) = Id\). Instead, we use a modified normalization, as in the following proposition. For the case \(k_0 = 3, k_1 = -1 \), we follow section 3 and section 4 of \cite{DGR2010}. The Propositions and Corollary in this section can be proved as in \cite{DGR2010}. The parameters such as \(k_0, k_1\) need to be changed, but the method of proof is exactly the same. For this reason we omit the proofs. Instead of this, we give the proofs in Appendix A,
	\begin{prop}[section 3 of \cite{DGR2010}]\label{prop3.1}
		Let
		\begin{equation}
			L = \left(
			\begin{array}{cc}
				1 & 0 \\
				\lambda^{-1} \log{\frac{z}{2}} & 1
			\end{array}
			\right)
			\left(
			\begin{array}{cc}
				y_0 & \lambda z (y_0)_z \\
				\frac{1}{4} y_1 & y_0 + \lambda z \frac{1}{4} (y_1)_z
			\end{array}
			\right), \nonumber
		\end{equation}
		where \(y_0:\mathbb{C}^* \rightarrow \mathbb{C}\) and \(y_1:\mathbb{C}^* \rightarrow \mathbb{C}\) are given by
		\begin{equation}
			y_0(z,\lambda) = \sum_{j \ge 0} \frac{\lambda^{-2j} z^{4j} }{16^j(j!)^2} ,\ \ y_1(z,\lambda) = -2 \lambda^{-1} \sum_{j \ge 1} (1 + \cdots + \frac{1}{j} ) \frac{\lambda^{-2j} z^{4j} }{16^j (j!)^2}. \nonumber  
		\end{equation}
		Then \(L \) is a solution of
		\begin{equation}
			L^{-1} dL = \frac{1}{\lambda} \left(
			\begin{array}{cc}
				0 & z^3 \\
				z^{-1} & 0
			\end{array}
			\right)dz, \nonumber
		\end{equation}
		on \(\mathbb{C}^* \). Here, we regard \(\log{z}\) as a multi-valued function on \(\mathbb{C}^*\).
	\end{prop}
	The solution \(L \) in Proposition 3.1 is a multi-valued function on \(\mathbb{C}^* \). We can split \(\phi \) near \(z = 0 \) by the Iwasawa factorization.
	\begin{prop}[Theorem 4.3 of \cite{DGR2010}]
		For any \(a \in \mathbb{R}_{>0} \) and \(y_0, y_1 \) as in Proposition 3.1,
		\begin{equation}
			\phi = \left(
			\begin{array}{cc}
				\sqrt{-a} & -\lambda / \sqrt{-a} \\
				0 & 1 / \sqrt{-a}
			\end{array}
			\right)
			\left(
			\begin{array}{cc}
				1 & 0 \\
				\lambda^{-1} \log{\frac{z}{2}} & 1
			\end{array}
			\right) \left(
			\begin{array}{cc}
				y_0 & \lambda z (y_0)_z \\
				\frac{1}{4} y_1 & y_0 + \lambda z \frac{1}{4} (y_1)_z
			\end{array}
			\right), \nonumber
		\end{equation}
		admits an Iwasawa factorization
		\begin{equation*}
			\phi = F \left(
			\begin{array}{cc}
				0 & \lambda \\
				-\lambda^{-1} & 0
			\end{array}
			\right) B,\ \ \ F \in (\Lambda {\rm SU}_{1,1})_{\sigma},\ B \in (\Lambda^{+,>0} {\rm SL}_2 \mathbb{C})_{\sigma}, \nonumber
		\end{equation*}
		on some domain \(U = V \cap \mathbb{C} \backslash  (-\infty,0] \), where \(V \) is a neighbourhood of \(z = 0 \) in \(\mathbb{C} \).
	\end{prop}
	From the Iwasawa factorization of \(\phi \), we obtain a local solution of the sinh-Gordon equation as follows.
	\begin{prop}[Lemma 4.2 of \cite{DGR2010}]
		For any \(e^{i\theta } \in S^1 \),
		\begin{equation}
			\phi(z e^{i\theta}, \lambda e^{i2\theta} ) = \delta T^{-1} \phi(z,\lambda) T, \nonumber
		\end{equation}
		where \(\delta = \delta(\theta) \in (\Lambda {\rm SU}_{1,1 } )_{\sigma } \) and
		\begin{equation}
			T = T(\theta) = \left( \begin{array}{cc}
				e^{-i\theta } & 0 \\
				0 & e^{i\theta }
			\end{array} \right) \in U(1). \nonumber
		\end{equation} 
	\end{prop}
	\begin{cor}[Proposition 5.2 of \cite{DGR2010}]\label{cor3.4}
		Let
		\begin{equation}
			\phi = F \left(
			\begin{array}{cc}
				0 & \lambda \\
				-\lambda^{-1} & 0
			\end{array}
			\right) B, \nonumber
		\end{equation}
		be an Iwasawa factorization on some domain \(U = V \cap \mathbb{C} \backslash  (-\infty,0] \), where \(V \) is a neighbourhood of \(z = 0 \) in \(\mathbb{C} \). We put
		\begin{equation}
			B(z,\lambda = 0) = \left(
			\begin{array}{cc}
				e^{\frac{v}{2} } & 0 \\
				0 & e^{- \frac{v}{2} }
			\end{array}
			\right), \nonumber
		\end{equation}
		then \(v(z, \overline{z}) = v(|z|) \) and \(v:U \rightarrow \mathbb{R} \) satisfies
		\begin{equation}
			\frac{1}{4} \left(v_{rr} + \frac{1}{r} v_r  \right) - r^6 e^{2v} + r^{-2} e^{-2v} = 0, \nonumber
		\end{equation}
		and \(e^{\frac{v}{2}} \sim \sqrt{-a -2 \log{r} } \) as \(r \rightarrow 0\), where \(r = |z| \).
	\end{cor}
	For \(v \) in Corollary 3.4, we put \(e^u = r^2 e^v \) and \(x = \frac{r^2}{2} \), then \(u \) satisfies
	\begin{equation}
		\frac{1}{4} \left(u_{xx} + \frac{1}{x} u_x \right) = e^{2u} - e^{-2u}, \nonumber
	\end{equation} 
	and \(u \sim (1 + o(x) )\log{x} \) as \(x \rightarrow 0 \). Thus, we obtain a local solution \(u:U \rightarrow \mathbb{R} \) of the sinh-Gordon equation near \(z = 0 \). \vspace{3mm} \\
	\ \ In this paper, our goal is to obtain the global Iwasawa factorization of \(\phi \) and give a global solution of the sinh-Gordon equation. For the canonical solution \(L \), we want to find a suitable dressing action \(\gamma_0 \) such that the Iwasawa factorization of \(\gamma_0 L \) is globally defined.
	
	\section{The asymptotic behaviour of \(\phi \) at large \(z \) }
	In section 3, we consider the DPW construction near \(z = 0 \) and obtain the local solution of the sinh-Gordon equation. On the other hand, our goal is to obtain the global Iwasawa factorization. Thus, we need to consider the DPW construction at large \(z \). In this section, for \(a > 0 \) we investigate the asymptotic behaviour of 
	\begin{equation}
		\phi = \left(
		\begin{array}{cc}
			\sqrt{-a} & -\lambda / \sqrt{-a} \\
			0 & 1 / \sqrt{-a} 
		\end{array}
		\right) \left(
		\begin{array}{cc}
			1 & 0 \\
			\lambda^{-1} \log{\frac{z}{2}} & 1
		\end{array}
		\right)\left(
		\begin{array}{cc}
			y_0 & \lambda z (y_1)_z \\
			\frac{1}{4}y_1 & y_0 + \lambda z \frac{1}{4} (y_1)_z
		\end{array}
		\right), \nonumber
	\end{equation}
	at large \(z \) (more precisely, we investigate the behaviour at \(\lambda^{-1} z^2 = \infty \)). \\
	We know (section 3 of \cite{DGR2010}) that the scalar equation corresponding to the Smyth potential with \(k_0 = 3,\ k_1 = -1 \) is
	\begin{equation}
		y_{zz } + \frac{1}{z} y_z - \lambda^{-2 } z^2 y = 0. \nonumber
	\end{equation}
	Fix \(\lambda \) and by changing of the coordinate \(x = \lambda^{-1} \frac{z^2}{2} \), this equation can be transformed to the modified Bessel equation (see definition B in Appendix or \cite{WW1996}) with order zero
	\begin{equation}
		y_{xx } + \frac{1}{x} y_x - y = 0. \nonumber
	\end{equation}
	The properties of the Bessel functions are well-known and we shall use them to investigate the properties of \(\phi \). First, we consider the universal covering \(\tilde{\lambda} \) of \(\lambda \in \mathbb{C}^* \) to investigate the multi-valued function \(\phi \) and express \(\phi \) in terms of the Bessel functions:
	\begin{prop}\label{prop4.1}
		Let \(Pr: \mathbb{C} \rightarrow \mathbb{C}^*: \tilde{\lambda } \mapsto e^{\tilde{\lambda } } = \lambda \) be the universal covering of \(\mathbb{C}^*  \), \(I_0(x)\) the modified Bessel function of the first kind and \(Y_0(x) \) the Bessel function of the second kind (the definitions are given in Appendix B).\\
		For \(z \in \mathbb{R}_{>0}\), we have
		\begin{align}
			\phi(z,\lambda ) &= \left(
			\begin{array}{cc}
				1+p & \lambda p \\
				-\lambda^{-1} p & 1-p
			\end{array}
			\right)
			\left(
			\begin{array}{cc}
				\sqrt{-a} & -\lambda / \sqrt{-a} \\
				0 & 1 / \sqrt{-a}
			\end{array}
			\right) \cdot \nonumber \\
			&\ \ \ \ \ \left(
			\begin{array}{cc}
				1 & 0 \\
				- \lambda^{-1} \left( \frac{\gamma}{2} + i \frac{\pi}{4} \right) & \frac{\pi}{4}
			\end{array}
			\right) \left(
			\begin{array}{cc}
				I_0(\lambda^{-1} \frac{z^2}{2}) & \lambda z (I_0(\lambda^{-1} \frac{z^2}{2}))_z \\
				\lambda^{-1} Y_0(i\lambda^{-1} \frac{z^2}{2}) & z(Y_0(i\lambda^{-1} \frac{z^2}{2} )_z )
			\end{array}
			\right),  \nonumber
		\end{align}
		where \( p = \frac{1}{2a} \log{\lambda} = \frac{\tilde{\lambda}}{2a}\), \(\gamma \) is the Euler constant. For fixed \(z\), \(p\) and \(Y_0\) are multi-valued functions in \(\lambda \in \mathbb{C}^*\) but we can regard \(p(\lambda ), Y_0(i\lambda^{-1} \frac{z^2}{2} ) \) as single-valued functions in \(\tilde{\lambda } \in \mathbb{C}\).
		% i.e.
		%\begin{align}
		%	&p:\mathbb{C} \rightarrow : \tilde{\lambda} \mapsto \frac{\tilde{\lambda}}{2a}, \nonumber \\
		%	&Y_0:\mathbb{C} \rightarrow \mathbb{C}:\tilde{\lambda} \mapsto \frac{2}\pi\left\{i\frac{\pi}{2}-\tilde{\lambda}+\log{\left(\frac{z^2}{2}\right)}\right\}\sum_{j=0}^{\infty}\frac{1}{(j+1)!}\left(\frac{e^{-\tilde{\lambda}}z^2}{4} \right)^{2j} \nonumber \\
		%	&\hspace{2.5cm}-\frac{1}{\pi}\sum_{j=0}^{\infty}\frac{1}{(j!)^2}(-1)^j-\gamma + \left(1+\frac{1}{2}+\cdots+\frac{1}{j}\right)\left(\frac{ie^{-\tilde{\lambda}}z^2}{4}\right)^{2j}. \nonumber
		%\end{align}
	\end{prop}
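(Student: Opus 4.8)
The plan is to trace both sides back to the scalar equation $y_{zz}+\frac{1}{z}y_z-\lambda^{-2}z^2y=0$. Recall from Section 3 that $\phi=C_aL$, where $C_a=\begin{pmatrix}\sqrt{-a}&-\lambda/\sqrt{-a}\\0&1/\sqrt{-a}\end{pmatrix}$ and $L$ solves $L^{-1}dL=\xi$ with $\xi=\frac{1}{\lambda}\begin{pmatrix}0&z^3\\z^{-1}&0\end{pmatrix}dz$. A short computation shows that if $\eta(z,\lambda)$ solves the scalar equation, then the row $(\eta,\lambda z\eta_z)$ satisfies $d(\eta,\lambda z\eta_z)=(\eta,\lambda z\eta_z)\xi$; moreover, left multiplication by a $z$-independent lower-triangular matrix simply performs a row operation on the underlying pair of solutions, since its entries commute with $\partial_z$. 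So I would first put $L$ into the \emph{companion} form $L=\begin{pmatrix}\eta_1&\lambda z(\eta_1)_z\\\eta_2&\lambda z(\eta_2)_z\end{pmatrix}$: carrying out the product with the unipotent factor $\begin{pmatrix}1&0\\\lambda^{-1}\log\frac{z}{2}&1\end{pmatrix}$ gives $\eta_1=y_0$ and $\eta_2=\lambda^{-1}\log\frac{z}{2}\,y_0+\frac{1}{4}y_1$, and then identify $\eta_1,\eta_2$ with the solutions built from $I_0$ and $Y_0(i\,\cdot\,)$ together with the two explicit constant matrices.

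The identification of $\eta_1$ is immediate: with $x:=\lambda^{-1}z^2/2$ one has $(x/2)^{2j}=\lambda^{-2j}z^{4j}/16^j$, so a termwise comparison of power series gives the exact identity $y_0=I_0(x)$. For $\eta_2$ I would use the standard expansion $\sum_{j\ge1}\frac{h_j}{(j!)^2}(x/2)^{2j}=K_0(x)+(\log(x/2)+\gamma)I_0(x)$, with $h_j=1+\cdots+\frac{1}{j}$, to obtain $\frac{1}{4}y_1=-\frac{1}{2}\lambda^{-1}(K_0(x)+(\log(x/2)+\gamma)I_0(x))$.

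The key step is the substitution $\log(x/2)=2\log\frac{z}{2}-\log\lambda$ into $\eta_2$: the two $\log\frac{z}{2}$ contributions cancel, leaving $\eta_2=-\frac{1}{2}\lambda^{-1}K_0(x)+\frac{1}{2}\lambda^{-1}(\log\lambda-\gamma)I_0(x)$, which has no branch cut in $z$. Applying the connection formula $K_0(x)=\frac{\pi i}{2}I_0(x)-\frac{\pi}{2}Y_0(ix)$ turns this into $\eta_2=\frac{\log\lambda}{2\lambda}I_0(x)+\frac{\pi}{4}\lambda^{-1}Y_0(ix)-\lambda^{-1}(\frac{\gamma}{2}+i\frac{\pi}{4})I_0(x)$. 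Reading off the corresponding row operations, $L=\begin{pmatrix}1&0\\\frac{\log\lambda}{2\lambda}&1\end{pmatrix}\begin{pmatrix}1&0\\-\lambda^{-1}(\frac{\gamma}{2}+i\frac{\pi}{4})&\frac{\pi}{4}\end{pmatrix}\mathcal B$, where $\mathcal B=\begin{pmatrix}I_0(x)&\lambda z(I_0(x))_z\\\lambda^{-1}Y_0(ix)&z(Y_0(ix))_z\end{pmatrix}$ is the Bessel matrix in the statement. Multiplying on the left by $C_a$ and checking the purely algebraic identity $C_a\begin{pmatrix}1&0\\\frac{\log\lambda}{2\lambda}&1\end{pmatrix}=\begin{pmatrix}1+p&\lambda p\\-\lambda^{-1}p&1-p\end{pmatrix}C_a$ with $p=\frac{1}{2a}\log\lambda$ — which reduces to $(\sqrt{-a})^2=-a$ — yields precisely the claimed expression for $\phi$.

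The part I expect to cost the most care is the logarithmic bookkeeping: getting the harmonic-number coefficients in the $K_0$-series right, choosing the correct branch in $K_0(x)=\frac{\pi i}{2}I_0(x)-\frac{\pi}{2}Y_0(ix)$, and tracking the branches of $\log\lambda$ and of $Y_0(i\lambda^{-1}z^2/2)$. This is exactly why the identity is asserted for $z\in\mathbb{C}-(-\infty,0]$ and on the universal cover of $\mathbb{C}^*$ in $\lambda$ (coordinate $\tilde{\lambda}$, so that $\log\lambda$ is single-valued), where $p(\lambda)$ and $Y_0(i\lambda^{-1}z^2/2)$ become single-valued; once the constant $\frac{\gamma}{2}+i\frac{\pi}{4}$ is pinned down by the cancellation above, the remaining matrix algebra is routine.
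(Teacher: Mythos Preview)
Your proposal is correct and follows essentially the same approach as the paper: identify $y_0=I_0(x)$ and $\tfrac14 y_1$ via the standard series (the paper uses the $Y_0(ix)$ expansion directly, you go via $K_0$ and then the connection formula $K_0=\tfrac{\pi i}{2}I_0-\tfrac{\pi}{2}Y_0(i\,\cdot\,)$, which is equivalent), then substitute $\log(x/2)=2\log\tfrac{z}{2}-\log\lambda$ and absorb the constants into the two $z$-independent left factors. In fact you supply more detail than the paper does---the explicit cancellation of the $\log\tfrac{z}{2}$ terms and the verification of the conjugation identity $C_a\begin{pmatrix}1&0\\ \tfrac{\log\lambda}{2\lambda}&1\end{pmatrix}=H(\lambda)\,C_a$---whereas the paper simply writes down the series for $Y_0(ix)$, the resulting expression for $y_1$, and says ``thus we obtain the stated result.''
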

	\begin{proof}
		From Definition B.2 and Proposition B.1 in Appendix B, the expansion series of \(I_0(x),\ Y_0(ix) \) at \(x = 0 \) are given by
		\begin{equation}
			I_0(x) = \sum_{j \ge 0} \frac{1}{(j!)^2} \left(\frac{x}{2} \right)^{2j}, \nonumber
		\end{equation}
		and 
		\begin{equation}
			Y_0(ix) = \frac{2}{\pi} \left\{ I_0(x) \log{\frac{x}{2} } + \left( \gamma + i \frac{\pi}{2} \right) I_0(x) - \sum_{j \ge 1} \frac{1}{(j!)^2} (1+ \cdots + \frac{1}{j} ) \left(\frac{x}{2} \right)^{2j }  \right\}. \nonumber
		\end{equation}
		From the expansion series of \(y_0,\ y_1 \), we have \(y_0(z) = I_0(\lambda^{-1} \frac{z^2}{2} ) \) 
		and
		\begin{align}
			y_1(z) &= -2\lambda^{-1} \left\{ I_0\left(\lambda^{-1} \frac{z^2}{2} \right) \log{\left(\frac{\lambda^{-1} z^2 }{4} \right) } \right. \nonumber \\
			&\hspace{3cm} \left. + \left( \gamma + i\frac{\pi}{2} \right) I_0\left(\lambda^{-1} \frac{z^2}{2} \right) - \frac{\pi}{2} Y_0\left(i \lambda^{-1} \frac{z^2}{2} \right)  \right\}, \nonumber
		\end{align}
		thus, we obtain the stated result. \qed
	\end{proof}
	\begin{rem}
		We remark that \(\phi  \) is a holomorphic function of two variables \(z, \lambda \), but \(I_0, Y_0 \) can be regarded as a holomorphic function of one variable \(x = \lambda^{-1 } \frac{z^2}{2} \). We can see that \(x \) satisfies the homogeneity condition i.e. \(x \) is invariant under the transformation \(z \mapsto e^{i\theta }z,\ \lambda \mapsto e^{i2\theta }\lambda \) for all \(\theta \in \mathbb{R} \). Thus, \(I_0, Y_0 \) also satisfy the homogeneity condition.
	\end{rem}
	In order to investigate the behaviour of \(\phi \) at large \(\lambda^{-1 } z^2 \), we use the asymptotic expansion of the Bessel functions:
	\begin{prop}[17-5, 17-6 of \cite{WW1996}]
		For \(- \frac{3}{2}\pi < {\rm arg}(x) < \frac{\pi}{2}\),
		\begin{equation}
			I_0(x) = \sqrt{\frac{2 }{\pi x } } e^{- i\frac{\pi}{4} } \left\{ \left(1 + T_1(x) \right) \cos{\left(ix - \frac{\pi}{4} \right)} - T_2(x) \sin{\left(ix - \frac{\pi}{4} \right) } \right\} , \nonumber
		\end{equation}
		\begin{equation}
			Y_0(ix) = \sqrt{\frac{2 }{\pi x } } e^{- i\frac{ \pi}{4} } \left\{ \left(1 + T_1(x) \right) \sin{\left(ix - \frac{\pi}{4} \right)} + T_2(x) \cos{\left(ix - \frac{\pi}{4} \right) }  \right\} , \nonumber
		\end{equation}
		where \(- \frac{3}{4}\pi < {\rm arg}\left( \sqrt{x} \right) < \frac{\pi}{4}\) and 
		\begin{equation}
			|T_1(x)| \le  C_1 |x|^{-2},\ \ |T_2(x)| \le C_2 |x|^{-1},\ \ \  C_1,\ C_2:{\rm constant}. \nonumber 
		\end{equation}
	\end{prop}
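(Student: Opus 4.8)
\textit{Proof proposal.}
The plan is to derive both formulas from the classical asymptotic expansions of the order-zero Bessel functions $J_0$ and $Y_0$, which are the content of \cite{WW1996}, by the substitution $w=ix$. The facts I would use are: (i) $I_0(x)=J_0(ix)$, immediate from the defining power series; (ii) with the single-valued convention adopted above, $Y_0(ix)$ is the principal branch of $Y_0$ evaluated at $ix$, which is meaningful on the stated sector because $-\tfrac32\pi<\arg x<\tfrac\pi2$ forces $-\pi<\arg(ix)<\pi$, keeping $ix$ off the branch cut of $Y_0$; and (iii) the classical expansions of $J_0$ and $Y_0$, valid for $|\arg w|<\pi$, can be written as
\begin{align*}
J_0(w) &= \sqrt{\tfrac{2}{\pi w}}\,\Bigl\{P_0(w)\cos\bigl(w-\tfrac\pi4\bigr) - Q_0(w)\sin\bigl(w-\tfrac\pi4\bigr)\Bigr\}, \\
Y_0(w) &= \sqrt{\tfrac{2}{\pi w}}\,\Bigl\{P_0(w)\sin\bigl(w-\tfrac\pi4\bigr) + Q_0(w)\cos\bigl(w-\tfrac\pi4\bigr)\Bigr\},
\end{align*}
where $P_0,Q_0$ are the \emph{same} pair of functions for $J_0$ and $Y_0$ (obtained by collecting the even- and odd-order terms of the Hankel series), and where truncating each at its first term gives $|P_0(w)-1|\le C_1'|w|^{-2}$ and $|Q_0(w)|\le C_2'|w|^{-1}$, uniformly on closed subsectors of $|\arg w|<\pi$. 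These bounds are exactly what the integral form of the remainder in \cite{WW1996} provides; alternatively one may reprove (iii) from $J_0(w)=\tfrac1\pi\int_0^\pi\cos(w\sin t)\,dt$ by repeated integration by parts, or by applying Watson's lemma to the integral representation of $K_0$.

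Given (i)--(iii) the proof is a substitution. Putting $w=ix$, the hypothesis $-\tfrac32\pi<\arg x<\tfrac\pi2$ is precisely $-\pi<\arg(ix)<\pi$, so both expansions in (iii) apply; moreover $|ix|=|x|$, and fixing the square-root branch so that $\sqrt{2/(\pi w)}=\sqrt{2/(\pi x)}\,e^{-i\pi/4}$ on this sector, the $J_0$-identity turns into the asserted formula for $I_0(x)$ and the $Y_0$-identity into the asserted formula for $Y_0(ix)$, in both cases with $1+T_1(x):=P_0(ix)$ and $T_2(x):=Q_0(ix)$. The estimates $|T_1(x)|\le C_1|x|^{-2}$ and $|T_2(x)|\le C_2|x|^{-1}$ are then just the bounds on $P_0-1$ and $Q_0$ with $|w|$ replaced by $|x|$.

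The care, and the main thing to get right, is entirely in the branch and argument bookkeeping: one must verify that the principal branch of $Y_0$ at $ix$ agrees with the single-valued convention fixed earlier (so that invoking the classical $Y_0$-expansion is legitimate), that the common square-root branch really is the one producing the overall factor $e^{-i\pi/4}$ on the whole sector $-\tfrac32\pi<\arg x<\tfrac\pi2$, and that the regrouping of the Hankel series genuinely yields a single pair $(P_0,Q_0)$ shared by $J_0$ and $Y_0$. I also expect the remainder constants to be uniform only on closed subsectors, so if a uniform statement on the full open sector is wanted one should either track the $\delta$-dependence or check that the application in the next section uses only a compact range of $\arg x$. Once these points are settled, there is nothing further to compute.
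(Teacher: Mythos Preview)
Your proposal is correct and matches the paper's approach: the paper does not prove Proposition~4.2 in the main text but cites \cite{WW1996}, and in the appendix records exactly the $J_\alpha$, $Y_\alpha$ expansions you call (iii) (with $P_0,Q_0$ written there as $T_{1,n},T_{2,n}$, built from the Hankel remainders), valid on $-\pi<\arg w<\pi$. Your substitution $w=ix$, the sector translation $-\tfrac32\pi<\arg x<\tfrac\pi2\Leftrightarrow -\pi<\arg(ix)<\pi$, and the branch computation $\sqrt{2/(\pi ix)}=e^{-i\pi/4}\sqrt{2/(\pi x)}$ are precisely the steps that reduce the appendix statement to Proposition~4.2.
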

	\begin{rem}
		We remark that \(T_1(x),\ T_2(x) \) are holomorphic function on \(- \frac{3}{2} \pi < {\rm arg}(x) < \frac{\pi}{2} \) and \(T_1,T_2 \) converge to \(0 \) as \(x \rightarrow \infty \) on this domain. In Proposition B.3, we choose \(\alpha=0\) and we denote \(T_1(x)=T_1^0(ix), T_2(x)=T_2^0(ix)\). (See also 17-5, 17-6 of \cite{WW1996}. They use the symbol \(U_{\alpha}, V_{\alpha}\) instead of \(T_1^{\alpha}, T_2^{\alpha}\). More precisely, \(U_{\alpha}, V_{\alpha}\) are the asymptotic expansion of \(T_1^{\alpha}, T_2^{\alpha}\) respectively.)
	\end{rem}
	In this section, \(T_1,\ T_2 \rightarrow 0 \) as \(\lambda^{-1} z^2 \rightarrow \infty \) on \(-\frac{3}{2}\pi < {\rm arg}\left(\lambda^{-1} z^2 \right) < \frac{\pi}{2} \). From Proposition 4.1, 4.2, we split \(\phi \) into a \(z \)-independent part, a divergent part at \(\lambda^{-1 } z^2 = \infty \), a convergent part at \(\lambda^{-1 } z^2 = \infty \) and a \(\lambda \)-independent diagonal matrix. This splitting gives the asymptotic behaviour of \(\phi \) at large \(\lambda^{-1}z^2 \) with the leading term
	\begin{equation}
		\exp{\left(\lambda^{-1} \frac{z^2}{2} \left( \begin{array}{cc}
				0 & 1 \\
				1 & 0
			\end{array} \right) \right) }. \nonumber
	\end{equation}
	Since \(\phi \) is a multi-valued function for \(\lambda \), we consider the universal covering of \(\mathbb{C}^* \) as in Proposition 4.1.
	\begin{prop}
		Let \(Pr: \mathbb{C} \rightarrow \mathbb{C}^*: \tilde{\lambda } \mapsto e^{\tilde{\lambda } } = \lambda \) be the universal covering of \(\mathbb{C}^*  \). We define
		\begin{equation}
			\tilde{\Omega}_0  = \left\{\tilde{\lambda } \in \mathbb{C} \ | \ - \frac{\pi}{2} < {\rm Im}(\tilde{\lambda }) < \frac{3}{2}\pi  \right\}, \nonumber
		\end{equation}
		and \(\tilde{\Omega}_m  = \tilde{\Omega}_0 - im\pi \) for \(m \in \mathbb{Z} \). \\
		For \(z \in \mathbb{R}_{>0},\ \lambda = e^{\tilde{\lambda } }\ (\tilde{\lambda } \in \tilde{\Omega }_m ) \), we split	
		\begin{equation}
			\phi(z,\lambda) = H(\lambda) A_m(\lambda) \phi_0(z,\lambda) K_m(z,\lambda) C(z), \nonumber
		\end{equation}
		where
		\begin{enumerate}
			\item [(a)] \(H = \left(
			\begin{array}{cc}
				1+p & \lambda p \\
				-\lambda^{-1} p & 1-p
			\end{array}
			\right) \), where \(p = \frac{1}{2a} \log{\lambda} = \frac{\tilde{\lambda}}{2a},\ \tilde{\lambda} \in \tilde{\Omega}_m\), \vspace{3mm}

			\item [(b)] \(A_m =  \frac{i}{2 \sqrt{a \pi} } \left(
			\begin{array}{cc}
				i^m \sqrt{\lambda} \left\{ q \gamma e^{-i \frac{\pi}{4}} + s \right\}
				& i^{-m} \sqrt{\lambda} \left\{q \gamma e^{i\frac{\pi}{4} } - \overline{s} \right\}
				\\ i^m \sqrt{\lambda}^{-1} \left\{ \sqrt{2 } \gamma e^{-i \frac{\pi}{4}} - s \right\}
				& i^{-m} \sqrt{\lambda}^{-1} \left\{ \sqrt{2 } \gamma e^{i \frac{\pi}{4}} + \overline{s} \right\}
			\end{array} \right) \), \\
			where
			\begin{equation}
				q = 2 \left(2\sqrt{\frac{a }{\gamma} } - \sqrt{\frac{\gamma }{a } } \right) \sqrt{\frac{a}{2\gamma} } ,\ \ \ s = \left(\sqrt{2} m e^{i\frac{\pi}{4} } - 1\right)\pi, \nonumber
			\end{equation} \vspace{3mm}
			
			\item [(c)] \(\phi_0(z,\lambda) = \exp{\left( \lambda^{-1} \frac{z^2}{2}
				\left(
				\begin{array}{cc}
					0 & 1 \\
					1 & 0
				\end{array} \right)
				\right) }, \) \vspace{3mm}
			
			\item [(d)] \(K_m = \left(\begin{array}{cc}
				\tilde{T}_1(z,\lambda) & \tilde{T}_2(z,\lambda) + \lambda\left(z^{-1} \tilde{T}_2(z,\lambda) \right)_z \\
				\tilde{T}_2(z,\lambda) & \tilde{T}_1(z,\lambda) + \lambda \left(z^{-2}\tilde{T}_2(z,\lambda) \right)_z
			\end{array} \right) \vspace{2mm} \\ \hspace{5mm} = (Id + o(\lambda^{-1 } z^2 ) ) \)\ \ \ \ \ as\ \ \(\lambda^{-1}z^2 \rightarrow \infty \) for \(\tilde{\lambda} \in \tilde{\Omega}_m\),\\
			where
			\begin{align}
				&\tilde{T}_1(z,\lambda) = 1 + T_1\left(\lambda^{-1} \frac{z^2}{2}e^{-im\pi} \right), \nonumber\\
				&\tilde{T}_2(z,\lambda) = (-1)^{m+1}iT_2\left(\lambda^{-1} \frac{z^2}{2}e^{-im\pi} \right),\ \ \ \ \ \tilde{\lambda} \in \tilde{\Omega}_m, \nonumber
			\end{align}
			\item [(e)] \(C = \left(
			\begin{array}{cc}
				z^{-1 } & 0 \\
				0 & z
			\end{array}
			\right) \).
			
		\end{enumerate}
	\end{prop}
	\begin{proof}
		From Definition B2 and Proposition B1, the continuation formulas of the Bessel functions are given by, for all \(m \in \mathbb{Z} \),
		\begin{equation}
			\left(
			\begin{array}{c}
				I_0(xe^{im\pi }) \\ Y_0(ixe^{im\pi })
			\end{array}
			\right) = \left(
			\begin{array}{cc}
				1 & 0 \\
				2im & 1
			\end{array}
			\right) \left(
			\begin{array}{c}
				I_0(x) \\ Y_0(ix)
			\end{array}
			\right). \nonumber
		\end{equation}
		We use the form of \(\phi \) in Proposition 4.1. From Proposition 4.1 and the continuation formulas above, for \(-\frac{3}{2} \pi + m\pi < {\rm arg}(\lambda^{-1} z^2) < \frac{\pi}{2} + m\pi \), i.e. \(-\frac{3}{2} \pi < {\rm arg}(\lambda^{-1} z^2 e^{-im\pi }) < \frac{\pi}{2} \), we have
		\begin{align}
			&\phi(z,\lambda) \nonumber \\ &= \left(
			\begin{array}{cc}
				1+p(\lambda) & \lambda p(\lambda) \\
				-\lambda^{-1} p(\lambda) & 1-p(\lambda)
			\end{array}
			\right) \left(
			\begin{array}{cc}
				\sqrt{-a} & -\lambda / \sqrt{-a} \\
				0 & 1 / \sqrt{-a}
			\end{array}
			\right) \left(
			\begin{array}{cc}
				1 & 0 \\
				- \lambda^{-1} \left( \frac{\gamma}{2} + i \frac{\pi}{4} \right) & \frac{\pi}{4}
			\end{array}
			\right) \cdot \nonumber \\
			&\ \ \ \ \ \ \ \ \left(
			\begin{array}{cc}
				I_0(\lambda^{-1} \frac{z^2}{2} e^{-im\pi } \cdot e^{im\pi } ) & \lambda z (I_0(\lambda^{-1} \frac{z^2}{2} e^{-im\pi } \cdot e^{im\pi } ))_z \\
				\lambda^{-1} Y_0(i \lambda^{-1} \frac{z^2}{2} e^{-im\pi } \cdot e^{im\pi } ) & z(Y_0(i \lambda^{-1} \frac{z^2}{2} e^{-im\pi } \cdot e^{im\pi } ) )_z 
			\end{array}
			\right)  \nonumber \\ 	
			&= \left(
			\begin{array}{cc}
				1+p(\lambda) & \lambda p(\lambda) \\
				-\lambda^{-1} p(\lambda) & 1-p(\lambda)
			\end{array}
			\right) \left( \begin{array}{cc}
				1 & 0 \\
				0 & \lambda^{-1 }
			\end{array} \right) \left( \begin{array}{cc}
				\sqrt{-a } & -1/ \sqrt{-a } \\
				0 & 1 / \sqrt{-a }   
			\end{array} \right) \nonumber \\ 
			&\hspace{1cm}  \left(
			\begin{array}{cc}
				1 & 0 \\
				- \left( \frac{\gamma}{2} + i \frac{\pi}{4} \right) & \frac{\pi}{4}
			\end{array}
			\right) \left(
			\begin{array}{cc}
				1 & 0 \\
				2im & 1
			\end{array}
			\right) \nonumber \\
			&\hspace{2.5cm} \left(
			\begin{array}{cc}
				I_0(\lambda^{-1} \frac{z^2}{2} e^{-im\pi } ) & z (I_0(\lambda^{-1} \frac{z^2}{2} e^{-im\pi } ))_z \\
				Y_0(i \lambda^{-1} \frac{z^2}{2} e^{-im\pi } ) & z(Y_0(i \lambda^{-1} \frac{z^2}{2} e^{-im\pi } ) )_z 
			\end{array}
			\right) \left( \begin{array}{cc}
				1 & 0 \\
				0 & \lambda
			\end{array} \right).  \nonumber
		\end{align}
		We simplify the forms of \(I_0,Y_0 \) in Proposition 4.2 as a column vector, then for \(- \frac{3}{2} \pi < {\rm arg}(\lambda^{-1} \frac{z^2}{2} e^{-im\pi } ) < \frac{\pi}{2}\)
		\begin{align}
			&\left(
			\begin{array}{c}
				I_0(\lambda^{-1}\frac{z^2}{2} e^{-im\pi } ) ) \\ Y_0(\lambda^{-1}i\frac{z^2}{2} e^{-im\pi } ) )
			\end{array}
			\right) \nonumber \\ 
			&= 2 \sqrt{\frac{\lambda}{\pi} } e^{- i\frac{ \pi}{4} } i^m z^{-1} \left(
			\begin{array}{cc}
				\cos{\left((-1)^m i\lambda^{-1}\frac{z^2}{2} - \frac{\pi}{4} \right)} & - \sin{\left((-1)^m i\lambda^{-1}\frac{z^2}{2} - \frac{\pi}{4} \right)} \\
				\sin{\left((-1)^m i\lambda^{-1}\frac{z^2}{2} - \frac{\pi}{4} \right)} &
				\cos{\left((-1)^m i\lambda^{-1}\frac{z^2}{2} - \frac{\pi}{4} \right)}
			\end{array}
			\right) \nonumber \\
			&\hspace{8cm} \cdot \left(
			\begin{array}{c}
				1 + T_1 \left(
				\lambda^{-1 } \frac{z^2}{2} e^{-im\pi }
				\right) \\ T_2 \left(
				\lambda^{-1 } \frac{z^2}{2} e^{-im\pi }
				\right)
			\end{array}
			\right) \nonumber \\
			&= \sqrt{\frac{2 \lambda}{\pi} } \cdot i^m \left(
			\begin{array}{cc}
				e^{ - i\frac{\pi}{4}} & e^{ i\frac{\pi}{4}} \\
				e^{ i\frac{3}{4}\pi} & e^{ i\frac{\pi}{4}}
			\end{array}
			\right) \exp{\left((-1)^m \lambda^{-1} \frac{z^2}{2}
				\left(
				\begin{array}{cc}
					0 & 1 \\
					1 & 0
				\end{array} \right)
				\right) } \nonumber \\
			&\hspace{7cm} \cdot \left(
			\begin{array}{c}
				z^{-1 } + z^{-1} T_1 \left(
				\lambda^{-1 } \frac{z^2}{2} e^{-im\pi }
				\right) \\ -i z^{-1} T_2 \left(
				\lambda^{-1 } \frac{z^2}{2} e^{-im\pi }
				\right)
			\end{array}
			\right) \nonumber \\
			&= \sqrt{\frac{2 \lambda}{\pi} } \left(
			\begin{array}{cc}
				e^{ - i\frac{\pi}{4}} & e^{ i\frac{\pi}{4}} \\
				e^{ i\frac{3}{4}\pi} & e^{ i\frac{\pi}{4}}
			\end{array}
			\right) \left( \begin{array}{cc}
				i^m & 0 \\
				0 & i^{-m } 
			\end{array} \right) \exp{\left(\lambda^{-1} \frac{z^2}{2}
				\left(
				\begin{array}{cc}
					0 & 1 \\
					1 & 0
				\end{array} \right)
				\right) } \nonumber \\
			&\hspace{6cm} \cdot \left(
			\begin{array}{c}
				z^{-1 } + z^{-1} T_1 \left(
				\lambda^{-1 } \frac{z^2}{2} e^{-im\pi }
				\right) \\ 
				(-1)^{m+1} i z^{-1} T_2 \left(
				\lambda^{-1 } \frac{z^2}{2} e^{-im\pi }
				\right)
			\end{array}
			\right) \nonumber
		\end{align}
		and
		\begin{align}
			&\left(
			\begin{array}{c}
				z(I_0(\lambda^{-1} \frac{z^2}{2} e^{-im\pi } ) )_z \\ z(Y_0(\lambda^{-1} i \frac{z^2}{2} e^{-im\pi} ) )_z
			\end{array}
			\right) \nonumber \\ 
			&= \sqrt{\frac{2 \lambda}{\pi} } \left(
			\begin{array}{cc}
				e^{ - i\frac{\pi}{4}} & e^{ i\frac{\pi}{4}} \\
				e^{ i\frac{3}{4}\pi} & e^{ i\frac{\pi}{4}}
			\end{array}
			\right) \left( \begin{array}{cc}
				i^m & 0 \\
				0 & i^{-m } 
			\end{array} \right) \exp{\left(\lambda^{-1} \frac{z^2}{2}
				\left(
				\begin{array}{cc}
					0 & 1 \\
					1 & 0
				\end{array} \right)
				\right) } \nonumber \\
			&\hspace{2mm} \cdot \left(
			\begin{array}{c}
				(-1)^{m+1} i \lambda^{-1 } z T_2 \left(
				\lambda^{-1 } \frac{z^2}{2} e^{-im\pi }
				\right) + z \left(z^{-1 } + z^{-1} T_1 \left(
				\lambda^{-1 } \frac{z^2}{2} e^{-im\pi }
				\right) \right)_z \\ 
				\lambda^{-1 } z \left( 1 + T_1 \left(
				\lambda^{-1 } \frac{z^2}{2} e^{-im\pi }
				\right) \right) + (-1)^{m+1} i z \left(z^{-1} T_2 \left(
				\lambda^{-1 } \frac{z^2}{2} e^{-im\pi }
				\right) \right)_z
			\end{array}
			\right). \nonumber 
		\end{align}
		Thus, we obtain
		\begin{align}
			&\phi(z,\lambda) \nonumber \\
			&= \left(
			\begin{array}{cc}
				1+p(\lambda) & \lambda p(\lambda) \\
				-\lambda^{-1} p(\lambda) & 1-p(\lambda)
			\end{array}
			\right) \cdot \sqrt{\frac{2\lambda }{\pi } } \left( \begin{array}{cc}
				1 & 0 \\
				0 & \lambda^{-1 }
			\end{array} \right) \left(
			\begin{array}{cc}
				\sqrt{-a} & -1 / \sqrt{-a} \\
				0 & 1 / \sqrt{-a}
			\end{array}
			\right) \nonumber \\
			&\hspace{5mm} \left(
			\begin{array}{cc}
				1 & 0 \\
				- \left( \frac{\gamma}{2} + i \frac{\pi}{4} \right) & \frac{\pi}{4}
			\end{array}
			\right) \left( \begin{array}{cc}
				i^m & 0 \\
				0 & i^{-m } 
			\end{array} \right)
			\left(
			\begin{array}{cc}
				1 & 0 \\
				2im & 1
			\end{array}
			\right) \left(
			\begin{array}{cc}
				e^{ - i\frac{\pi}{4}} & e^{ i\frac{\pi}{4}} \\
				e^{ i\frac{3}{4}\pi} & e^{ i\frac{\pi}{4}}
			\end{array}
			\right) \nonumber \\
			&\hspace{3cm} \cdot \exp{\left(\lambda^{-1} \frac{z^2}{2}
				\left(
				\begin{array}{cc}
					0 & 1 \\
					1 & 0
				\end{array} \right)
				\right) } \nonumber \\
			&\hspace{3mm} \cdot \left( \begin{array}{c}
				z^{-1 } + z^{-1} T_1 \left(
				\lambda^{-1 } \frac{z^2}{2} e^{-im\pi }
				\right) \\
				(-1)^{m+1} i z^{-1} T_2 \left(
				\lambda^{-1 } \frac{z^2}{2} e^{-im\pi }
				\right)
			\end{array} \right. \nonumber \\
			&\hspace{5mm} \left. \begin{array}{c}
				(-1)^{m+1} i \lambda^{-1 } z T_2 \left(
				\lambda^{-1 } \frac{z^2}{2} e^{-im\pi }
				\right) + z \left(z^{-1 } + z^{-1} T_1 \left(
				\lambda^{-1 } \frac{z^2}{2} e^{-im\pi }
				\right) \right)_z \\ 
				\lambda^{-1 } z \left( 1 + T_1 \left(
				\lambda^{-1 } \frac{z^2}{2} e^{-im\pi }
				\right) \right) + (-1)^{m+1} i z \left(z^{-1} T_2 \left(
				\lambda^{-1 } \frac{z^2}{2} e^{-im\pi }
				\right) \right)_z
			\end{array} \right) \nonumber \\
			&\hspace{8cm} \left( \begin{array}{cc}
				1 & 0 \\
				0 & \lambda
			\end{array} \right) \nonumber
		\end{align}
		We have
		\begin{align}
			&\sqrt{\frac{2\lambda }{\pi } } \left( \begin{array}{cc}
				1 & 0 \\
				0 & \lambda^{-1 }
			\end{array} \right) \left(
			\begin{array}{cc}
				\sqrt{-a} & -1 / \sqrt{-a} \\
				0 & 1 / \sqrt{-a}
			\end{array}
			\right) \left(
			\begin{array}{cc}
				1 & 0 \\
				- \left( \frac{\gamma}{2} + i \frac{\pi}{4} \right) & \frac{\pi}{4}
			\end{array}
			\right) \nonumber \\
			&\hspace{2cm} \left( \begin{array}{cc}
				i^m & 0 \\
				0 & i^{-m}
			\end{array} \right) \left( \begin{array}{cc}
				1 & 0 \\
				2im & 1
			\end{array} \right) \left(
			\begin{array}{cc}
				e^{ - i\frac{\pi}{4}} & e^{ i\frac{\pi}{4}} \\
				e^{ i\frac{3}{4}\pi} & e^{ i\frac{\pi}{4}}
			\end{array}
			\right) = A_m(\lambda), \nonumber
		\end{align}
		\begin{align}
			&\left( \begin{array}{c}
				z^{-1 } + z^{-1} T_1 \left(
				\lambda^{-1 } \frac{z^2}{2} e^{-im\pi }
				\right) \\
				(-1)^{m+1} i z^{-1} T_2 \left(
				\lambda^{-1 } \frac{z^2}{2} e^{-im\pi }
				\right)
			\end{array} \right. \nonumber \\
			&\hspace{3mm} \left. \begin{array}{c}
				(-1)^{m+1} i \lambda^{-1 } z T_2 \left(
				\lambda^{-1 } \frac{z^2}{2} e^{-im\pi }
				\right) + z \left(z^{-1 } + z^{-1} T_1 \left(
				\lambda^{-1 } \frac{z^2}{2} e^{-im\pi }
				\right) \right)_z \\ 
				\lambda^{-1 } z \left(1 + T_1 \left(
				\lambda^{-1 } \frac{z^2}{2} e^{-im\pi }
				\right) \right) + (-1)^{m+1} i z \left(z^{-1} T_2 \left(
				\lambda^{-1 } \frac{z^2}{2} e^{-im\pi }
				\right) \right)_z
			\end{array} \right) \nonumber \\
			&\hspace{5cm} \left( \begin{array}{cc}
				1 & 0 \\
				0 & \lambda
			\end{array} \right) = K_m(z,\lambda) C(z), \nonumber
		\end{align}
		and then we obtain
		\begin{equation}
			\phi(z,\lambda) =  H(\lambda) A_m(\lambda) \phi_0(z,\lambda) K_m(z,\lambda) C(z). \nonumber
		\end{equation}
		\qed
	\end{proof}
	\begin{rem}
		We remark that \(H,\ A_m \) are \(z \)-independent and \(H\) satisfies
		\begin{equation}
			\overline{H(\overline{\lambda}^{-1})}^t \left(\begin{array}{cc}
				1 & 0 \\
				0 & -1
			\end{array}\right)H(\lambda) =\left(\begin{array}{cc}
				1 & 0 \\
				0 & -1
			\end{array}\right), \nonumber
		\end{equation}
		on \(\tilde{\Omega}_m \). But
		\begin{equation}
			\overline{A(\overline{\lambda}^{-1})}^t \left(\begin{array}{cc}
				1 & 0 \\
				0 & -1
			\end{array}\right)A(\lambda) =\left(\begin{array}{cc}
				-1 & 0 \\
				0 & 1
			\end{array}\right), \nonumber
		\end{equation}
		if and only if \(a = \gamma \). Obviously, \(\phi_0 \) is divergent at \(\lambda^{-1 } z^2 = \infty \). On the other hand, \(K_m \) is holomorphic in \(\tilde{\lambda } \) on \(\tilde{\Omega}_m \) and converges to \(Id \) at \(\lambda^{-1 } z^2 = 0 \) on \(\tilde{\Omega}_m \).
	\end{rem}
	From the asymptotic behaviour of \(\phi \), in next section we investigate the behaviour of the Iwasawa factorization of \(\phi \) at \(\lambda^{-1} z^2  = \infty \).
	
	\section{Global Iwasawa factorization}
	In section 3, we proved that the holomorphic potential
	\begin{equation}
		\xi = \frac{1}\lambda \left(
		\begin{array}{cc}
			0 & z^3 \\
			z^{-1 } & 0
		\end{array}
		\right)dz, \nonumber
	\end{equation}
	gives a local solution of the sinh-Gordon equation near \(z=0 \) (see Corollary 3.4) by using the Iwasawa factorization
	\begin{equation}
		\phi = \left(
		\begin{array}{cc}
			\sqrt{-a} & -\lambda / \sqrt{-a} \\
			0 & 1 / \sqrt{-a} 
		\end{array}
		\right) L = F \left( \begin{array}{cc}
			0 & \lambda \\
			-\lambda^{-1 } & 0
		\end{array} \right) B. \nonumber
	\end{equation}
	
	\subsection{The Birkhoff factorization from the Iwasawa factorization}
	In this section, we want to find a suitable \(a > 0 \) such that the Iwasawa factorization is global on \(\mathbb{C} \backslash  (-\infty,0] \). To investigate this problem, we consider the following well-known relation with the Birkhoff factorization:
	\begin{prop}
		Fix \(z \in \mathbb{C} \backslash (-\infty,0] \). Let
		\begin{equation}
			g = \overline{\phi(z,\overline{\lambda}^{-1} ) }^t \left(
			\begin{array}{cc}
				1 & 0 \\
				0 & -1
			\end{array}
			\right) \phi(z,\lambda). \nonumber
		\end{equation}
		Then \(\phi \) admits an Iwasawa factorization
		\begin{equation}
			\phi = F w B,\ \ \
			\ \ F \in (\Lambda {\rm SU}_{1,1} )_{\sigma }, \ \ B \in (\Lambda^{+, >0 } {\rm SL}_2 \mathbb{C} )_{\sigma },  \nonumber 
		\end{equation}
		where
		\begin{equation}
			w = Id\ \ \ {\rm or }\ \ \ \left( \begin{array}{cc}
				0 & \lambda \\
				-\lambda^{-1 } & 0
			\end{array} \right), \nonumber
		\end{equation}
		at \(z \) if and only if \(g \) admits a Birkhoff factorization
		\begin{equation}
			g = B_- \left( \begin{array}{cc}
				1 & 0 \\
				0 & -1
			\end{array} \right) B_+,\ \ \
			\ \ B_- \in (\Lambda^- {\rm SL}_2 \mathbb{C} )_{\sigma },\ \ B_+ \in (\Lambda^+ {\rm SL}_2 \mathbb{C} )_{\sigma }, \nonumber
		\end{equation}
		at \(z \).
	\end{prop}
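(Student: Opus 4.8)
\emph{Setup.} Fix $z$, write $J=\mathrm{diag}(1,-1)$, and for a loop $\gamma$ put $\gamma^{\star}(\lambda):=\overline{\gamma(\bar\lambda^{-1})}^{\,t}$. Then $\star$ is an anti-automorphism of $(\Lambda\mathrm{SL}_2\mathbb{C})_{\sigma}$ interchanging $(\Lambda^{+}\mathrm{SL}_2\mathbb{C})_{\sigma}$ and $(\Lambda^{-}\mathrm{SL}_2\mathbb{C})_{\sigma}$, which on $S^1$ is just $\gamma\mapsto\gamma(\lambda)^{*}$ (conjugate transpose); hence $(\Lambda\mathrm{SU}_{1,1})_{\sigma}=\{\gamma:\gamma^{\star}J\gamma=J\}$. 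I will use the elementary identities $\mathrm{Id}^{\star}J\,\mathrm{Id}=J$, and $w^{\star}=w^{-1}$, $w^{\star}Jw=wJw^{-1}=-J$ for the twist matrix $w$, together with the facts that $g^{\star}=g$ and $\det g=-1$ always (because $J^{\star}=J$, $\det J=-1$, $\det\phi=1$). Let $\varepsilon=+1$ if $w=\mathrm{Id}$ and $\varepsilon=-1$ if $w$ is the twist matrix.

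\emph{The direction ``Iwasawa $\Rightarrow$ Birkhoff''.} If $\phi=FwB$ with $F\in(\Lambda\mathrm{SU}_{1,1})_{\sigma}$ and $B\in(\Lambda^{+,>0}\mathrm{SL}_2\mathbb{C})_{\sigma}$, then $g=\phi^{\star}J\phi=B^{\star}\bigl(w^{\star}(F^{\star}JF)w\bigr)B=B^{\star}(w^{\star}Jw)B=\varepsilon\,B^{\star}JB$. For $\varepsilon=1$ take $B_{-}=B^{\star}$, $B_{+}=B$. For $\varepsilon=-1$ absorb the sign using the constant $\sigma$-twisted matrix $D=\mathrm{diag}(i,-i)$, which has $\det D=1$ and $DJD=-J$: then $g=(B^{\star}D)\,J\,(DB)$, so $B_{-}=B^{\star}D\in(\Lambda^{-}\mathrm{SL}_2\mathbb{C})_{\sigma}$ and $B_{+}=DB\in(\Lambda^{+}\mathrm{SL}_2\mathbb{C})_{\sigma}$ work. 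Either way $g$ has the asserted Birkhoff factorization.

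\emph{The direction ``Birkhoff $\Rightarrow$ Iwasawa''.} Suppose $g=B_{-}JB_{+}$. Applying $\star$ and using $g^{\star}=g$ gives $B_{-}JB_{+}=B_{+}^{\star}JB_{-}^{\star}$, hence $M:=(B_{+}^{\star})^{-1}B_{-}J=JB_{-}^{\star}B_{+}^{-1}$. The first expression is holomorphic on $\{|\lambda|\ge 1\}\cup\{\infty\}$ and the second on $\{|\lambda|\le 1\}$, so by Liouville $M$ is a constant matrix; as a constant $\sigma$-twisted loop it is diagonal, $M^{\star}=M$ makes it real, and $\det M=\det g=-1$ forces $M=\mathrm{diag}(t,-t^{-1})$ with $t\in\mathbb{R}\setminus\{0\}$. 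Put $\varepsilon=\mathrm{sign}(t)$ and $P=\mathrm{diag}(|t|^{1/2},|t|^{-1/2})\in(\Lambda^{+,>0}\mathrm{SL}_2\mathbb{C})_{\sigma}$, so that $M=\varepsilon\,PJP$. From $B_{-}=B_{+}^{\star}MJ$ and $J^{2}=\mathrm{Id}$ we get $g=B_{+}^{\star}MB_{+}=\varepsilon\,(PB_{+})^{\star}J(PB_{+})$; with $B:=PB_{+}$ this reads $\phi^{\star}J\phi=\varepsilon\,B^{\star}JB$, i.e. $(\phi B^{-1})^{\star}J(\phi B^{-1})=\varepsilon J$. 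If $\varepsilon=1$, then $F:=\phi B^{-1}$ is $\sigma$-twisted, unimodular and satisfies $F^{\star}JF=J$, hence $F\in(\Lambda\mathrm{SU}_{1,1})_{\sigma}$ and $\phi=FB$. If $\varepsilon=-1$, set $F:=\phi(wB)^{-1}$; a short computation using $w^{\star}=w^{-1}$ and $wJw^{-1}=-J$ gives $F^{\star}JF=(w^{\star})^{-1}(\varepsilon J)w^{-1}=J$, so again $F\in(\Lambda\mathrm{SU}_{1,1})_{\sigma}$ and $\phi=FwB$. Finally, since $B$ is $\sigma$-twisted, $B|_{\lambda=0}=\mathrm{diag}(\rho,\rho^{-1})$ for some $\rho\in\mathbb{C}^{*}$; absorbing a suitable constant diagonal $\mathrm{SU}_{1,1}$ factor into $F$ (in the $\varepsilon=-1$ case one first commutes it through $w$, under which $\mathrm{diag}(e^{i\alpha},e^{-i\alpha})$ becomes $\mathrm{diag}(e^{-i\alpha},e^{i\alpha})$, still in $\mathrm{SU}_{1,1}$) we arrange $\rho>0$, so that $B\in(\Lambda^{+,>0}\mathrm{SL}_2\mathbb{C})_{\sigma}$ and $\phi=FwB$ is a genuine Iwasawa factorization.

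\emph{Main obstacle.} All the content is in the second direction: one has to play the reality relation $g^{\star}=g$ against the Birkhoff factorization to extract the constant connecting matrix $M$, and the entire dichotomy ``$w=\mathrm{Id}$ versus $w=$ twist matrix'' is encoded in the sign of the eigenvalues of $M$ — equivalently, in whether $M$ and $J$ are congruent via a diagonal $(\Lambda^{+,>0}\mathrm{SL}_2\mathbb{C})_{\sigma}$ factor. Carrying the $\sigma$-twisting through every step (it is precisely what makes $M$ diagonal and what lets the final phase normalization of $B$ go through) is the delicate part; everything else is routine loop-group algebra.
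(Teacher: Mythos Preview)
Your proof is correct and follows essentially the same route as the paper's: both directions hinge on the reality relation $g^{\star}=g$, which forces $B_-$ and $B_+^{\star}$ to differ by a constant real diagonal matrix whose sign selects between $w=\mathrm{Id}$ and the twist matrix. Your Liouville argument makes this step more explicit than the paper (which simply asserts $\overline{B_+(\bar\lambda^{-1})}^t=B_-\Theta$ with $\Theta=\mathrm{diag}(k,k^{-1})$, $k\in\mathbb{R}^*$), and the cosmetic differences --- absorbing the sign via $D=\mathrm{diag}(i,-i)$ rather than into $B_-$, and normalizing $B$ to $(\Lambda^{+,>0})_\sigma$ at the end rather than normalizing $B_+$ at the start --- are inessential.
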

	\begin{proof}
		Suppose that \(\phi \) admits an Iwasawa factorization \(\phi = FwB \) at \(z \). Let \( \epsilon = 1 \) if \(w = Id \) , \(\epsilon = -1 \) if \(w \neq Id \). We put \(B_+(\lambda) = B(z,\lambda) \) and \(B_-(\lambda) = \epsilon \overline{B(z, \overline{\lambda^{-1} } ) }^t \). Since \(B(z,\lambda) \in (\Lambda^{+, > 0 } {\rm SL}_2 \mathbb{C} )_{\sigma } \), we have \(B_+\in (\Lambda^+ {\rm SL}_2 \mathbb{C} )_{\sigma }, \ B_- \in (\Lambda^- {\rm SL}_2 \mathbb{C} )_{\sigma } \). From \(F \in (\Lambda {\rm SU}_{1,1})_{\sigma} \), we obtain a Birkhoff factorization
		\begin{align}
			g &= \overline{B(z,\overline{\lambda^{-1}} )}^t \overline{w}^t \left(\begin{array}{cc}
				1 & 0 \\
				0 & -1
			\end{array} \right) w B(z,\lambda) \nonumber \\
			&= \overline{B(z,\overline{\lambda^{-1}} )}^t \epsilon \left(\begin{array}{cc}
				1 & 0 \\
				0 & -1
			\end{array} \right) B(z,\lambda) \nonumber \\
			&= B_-(\lambda ) \left(
			\begin{array}{cc}
				1 & 0 \\
				0 & -1
			\end{array}
			\right) B(\lambda ). \nonumber
		\end{align}
		Conversely, if \(g \) admits a Birkhoff factorization 
		\begin{equation}
			g = B_- \left( \begin{array}{cc}
				1 & 0 \\
				0 & -1
			\end{array} \right) B_+,\ \ \ {\rm where }\ \ B_- \in (\Lambda^- {\rm SL}_2 \mathbb{C} )_{\sigma }, \ \ B_+ \in (\Lambda^+ {\rm SL}_2 \mathbb{C} )_{\sigma }, \nonumber
		\end{equation}
		at \(z \). Without loss of generality we can assume \(B_+ \in (\Lambda^{+, > 0 } {\rm SL}_2 \mathbb{C} )_{\sigma } \). \\
		From \(\overline{g(z,\overline{\lambda}^{-1})}^t = g(z,\lambda) \) and the uniqueness of the Birkhoff factorization, there exists a diagonal matrix \(\Theta = {\rm diag}(k,k^{-1}) \in {\rm SL}_2 \mathbb{R} \) such that
		\begin{equation}
			\overline{B_+(\overline{\lambda }^{-1 } ) }^t =B_-(\lambda ) \Theta, \ \ 
			B_+(\lambda ) = \Theta \overline{B_-(\overline{\lambda }^{-1 } ) }^t. \nonumber
		\end{equation}
		We put
		\begin{equation}
			B =  \left( \begin{array}{cc}
				\sqrt{|k| }^{-1 } & 0 \\
				0 & \sqrt{|k| }
			\end{array} \right) B_+,\ \ \ F = \phi B^{-1} w^{-1 } \nonumber
		\end{equation}
		where
		\begin{equation}
			w = \left\{ \begin{array}{cc}
				Id, & {\rm if }\ k > 0, \\
				\left( \begin{array}{cc}
					0 & \lambda \\
					- \lambda^{-1 } & 0
				\end{array} \right), & {\rm if }\ k < 0,
			\end{array}
			\right. \nonumber
		\end{equation}
		then we have \(B \in (\Lambda^{+, > 0 } {\rm SL}_2 \mathbb{C} )_{\sigma } \) and \(F \in (\Lambda {\rm SU}_{1,1} )_{\sigma } \). Hence, we obtain an Iwasawa factorization \(\phi = FwB \) at \(z \). \qed
	\end{proof}
	From Proposition 5.1, we investigate the Birkhoff factorization of \(g(z,\lambda) \) instead of the Iwasawa factorization of \(\phi \). First, we use the homogeneity condition on \(\phi \).
	\begin{lem}
		For \(z \in \mathbb{C} \backslash  (-\infty,0] \), \(g(|z|, \lambda) \) admits a Birkhoff factorization
		\begin{equation}
			g(|z|,\lambda) = B_- \left( \begin{array}{cc}
				1 & 0 \\
				0 & -1
			\end{array} \right) B_+,\ \ \ B_- \in (\Lambda^- {\rm SL}_2 \mathbb{C} )_{\sigma },\ \ B_+ \in (\Lambda^+ {\rm SL}_2 \mathbb{C} )_{\sigma }, \nonumber
		\end{equation}
		if and only if \(g(z,\lambda) \) admits a Birkhoff factorization
		\begin{equation}
			g(z,\lambda) = \tilde{B}_- \left( \begin{array}{cc}
				1 & 0 \\
				0 & -1
			\end{array} \right) \tilde{B}_+,\ \ \ \tilde{B}_- \in (\Lambda^- {\rm SL}_2 \mathbb{C} )_{\sigma },\ \ \tilde{B}_+ \in (\Lambda^+ {\rm SL}_2 \mathbb{C} )_{\sigma }. \nonumber
		\end{equation}
	\end{lem}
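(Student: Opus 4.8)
The plan is to exploit the homogeneity of $\phi$ in the pair $(z,\lambda)$: a rotation of $z$ changes $g(z,\lambda)$ only by a rotation of the spectral parameter $\lambda$ followed by conjugation by a constant diagonal loop, and both of these operations are harmless for Birkhoff factorizability. Thus I would deduce the Birkhoff factorization of $g(z,\lambda)$ from the one assumed for $g(|z|,\lambda)$.

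First I would write $z = re^{i\theta}$ with $r = |z| > 0$ and $\theta = \arg z \in (-\pi,\pi)$, so that both $r$ and $z$ lie in $\mathbb{C}-(-\infty,0]$. Applying the homogeneity relation $\phi(z e^{i\theta},\lambda e^{i2\theta}) = \delta(\theta)\,T(\theta)^{-1}\phi(z,\lambda)\,T(\theta)$ of Section~3 with base point $r$ and, after relabelling the spectral variable, one gets
\[
	\phi(z,\lambda) \;=\; \delta(\theta)\, T(\theta)^{-1}\,\phi\bigl(|z|,\lambda e^{-i2\theta}\bigr)\, T(\theta),
\]
where $\delta(\theta)\in(\Lambda {\rm SU}_{1,1})_\sigma$ and $T(\theta) = {\rm diag}(e^{-i\theta},e^{i\theta})$ is a constant (i.e.\ $\lambda$-independent) matrix with $\overline{T(\theta)}^t = T(\theta)^{-1}$, commuting with ${\rm diag}(1,-1)$. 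Substituting this into the definition of $g$ and restricting to $\lambda\in S^1$ (where $\overline{\lambda}^{-1}=\lambda$ and where the loop $\lambda\mapsto g(z,\lambda)$ lives), the factor $\delta(\theta)$ cancels against its conjugate transpose via the ${\rm SU}_{1,1}$ reality identity $\overline{\delta(\theta)}^t\,{\rm diag}(1,-1)\,\delta(\theta) = {\rm diag}(1,-1)$ (valid because $\delta(\theta)$ takes values in ${\rm SU}_{1,1}$ on $S^1$), and one is left with the covariance identity
\[
	g(z,\lambda) \;=\; T(\theta)^{-1}\, g\bigl(|z|,\lambda e^{-i2\theta}\bigr)\, T(\theta), \qquad \lambda\in S^1 .
\]

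Given this, I would take the hypothesised Birkhoff factorization $g(|z|,\lambda) = B_-\,{\rm diag}(1,-1)\,B_+$ with $B_\pm\in(\Lambda^\pm {\rm SL}_2\mathbb{C})_\sigma$, replace $\lambda$ by $\lambda e^{-i2\theta}$, and conjugate by $T(\theta)$, obtaining
\[
	g(z,\lambda) \;=\; \bigl(T(\theta)^{-1}B_-(\lambda e^{-i2\theta})\bigr)\,{\rm diag}(1,-1)\,\bigl(B_+(\lambda e^{-i2\theta})\,T(\theta)\bigr).
\]
It then remains to verify that $\tilde B_- := T(\theta)^{-1}B_-(\lambda e^{-i2\theta})\in(\Lambda^- {\rm SL}_2\mathbb{C})_\sigma$ and $\tilde B_+ := B_+(\lambda e^{-i2\theta})\,T(\theta)\in(\Lambda^+ {\rm SL}_2\mathbb{C})_\sigma$. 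This rests on two elementary facts: the rotation $\lambda\mapsto\lambda e^{-i2\theta}$ is a biholomorphism of $\mathbb{C}P^1$ fixing $0$, $\infty$ and $S^1$, carrying $I$ and $\mathbb{C}P^1-I$ onto themselves and commuting with $\lambda\mapsto-\lambda$, so that $\lambda\mapsto B_\pm(\lambda e^{-i2\theta})$ again lies in $(\Lambda^\pm {\rm SL}_2\mathbb{C})_\sigma$; and the constant loop $T(\theta)^{\pm 1}$ has determinant $1$, extends holomorphically over all of $\mathbb{C}P^1$, and is $\sigma$-equivariant because it is diagonal, hence lies in $(\Lambda^+ {\rm SL}_2\mathbb{C})_\sigma\cap(\Lambda^- {\rm SL}_2\mathbb{C})_\sigma$. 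Together these give $\tilde B_\pm\in(\Lambda^\pm {\rm SL}_2\mathbb{C})_\sigma$, which is the asserted Birkhoff factorization.

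There is no serious obstacle here; the whole content is turning the rotational symmetry of $\phi$ into a statement about the loop groups $(\Lambda^\pm {\rm SL}_2\mathbb{C})_\sigma$. The one place that demands a little care is the cancellation of the $\delta(\theta)$-factors in passing from $\phi$ to $g$: one must keep track of the point of $S^1$ at which the loop $\delta(\theta)$ is being evaluated (which depends on whether the spectral variable is taken to be $\lambda$ or $\lambda e^{-i2\theta}$), but in every case it is a point of $S^1$, so the ${\rm SU}_{1,1}$ reality condition --- rather than merely $\det\delta(\theta)=1$ --- does apply and the cancellation goes through.
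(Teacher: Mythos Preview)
Your proof is correct and follows essentially the same route as the paper: both use Proposition~3.3 to obtain $g(z,\lambda)=T(\theta)^{-1}g(|z|,\lambda e^{-2i\theta})T(\theta)$ and then rotate and conjugate the given factorization. The only cosmetic difference is that the paper distributes the $T$-factors symmetrically, defining $\tilde B_\pm(\lambda)=T^{-1}B_\pm(\lambda e^{-2i\theta})T$, whereas you place $T^{-1}$ on the left of $\tilde B_-$ and $T$ on the right of $\tilde B_+$; since $T$ is diagonal and commutes with ${\rm diag}(1,-1)$, the two choices are equivalent.
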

	\begin{proof}
		Suppose that \(g(|z|,\lambda) \) admits a Birkhoff factorization. We put \(r = |z| \in \mathbb{R}^* \) and \(e^{i\theta } = \frac{z}{|z|} \in S^1 \).
		From Proposition 3.3, we have \(\phi(z, \lambda ) = \delta T^{-1 } \phi(r,\lambda e^{-2i\theta }) T \). Since \(\delta \in (\Lambda {\rm SU}_{1,1 } )_{\sigma } \) and \(T \in U(1) \), we obtain \(g(z,\lambda ) =  T^{-1 } g(r,\lambda e^{-2i\theta } ) T \). We put \(\tilde{B}_-(z,\lambda) = T^{-1 } B_-(r,\lambda e^{-2i\theta } ) T \) and \(\tilde{B}_+(z,\lambda) = T^{-1 } B_+(r,\lambda e^{-2i\theta } ) T \), then we obtain a Birkhoff factorization of \(g(z,\lambda) \). \\
		The inverse is trivial. \qed
	\end{proof}
	From Lemma 5.1, without loss of generality we can assume \(z = r \in \mathbb{R}_{> 0} \). From now on, for fixed \(r \) we omit \(z \)-component in functions. For example, we write \(\phi(r,\lambda) = \phi(\lambda) \). %\vspace{3mm} \\
	
	\subsection{The behaviour of \(g\)}
	%\indent 
	Next, we investigate the behaviour of \(g \) by using Proposition 4.3. Let \(Pr : \mathbb{C} \rightarrow \mathbb{C}^* : \tilde{\lambda} \mapsto e^{\tilde{\lambda} }  = \lambda  \) be the universal covering of \(\mathbb{C}^* \) and we put \(\Omega_m = Pr(\tilde{\Omega }_m ) \subset \mathbb{C}^* \) for all \(m \in \mathbb{Z } \). Then
	\begin{equation}
		\Omega_0 = \left\{ \lambda  \in \mathbb{C}^* \ | \ - \frac{\pi}{2} < {\rm arg}(\lambda ) < \frac{3}{2}\pi \right\}, \nonumber
	\end{equation}
	and \(\Omega_m = e^{-im\pi } \Omega_0 \). We have \(\Omega_{m+2 } = \Omega_m \) fo all \(m \in \mathbb{Z} \).

	We use the result of Proposition 4.3 and we split \(g \) on \(\Omega_m \) so that the splitting looks like the Birkhoff factorization.
	\begin{prop}
		Fix \(r \in \mathbb{R}_{>0 } \), we define \(k_m:\Omega_m \rightarrow {\rm SL}_2 \mathbb{C}\ (m=0,1)\) by
		\begin{equation}
			k_0 = \tilde{k}_{2m} \circ (Pr|_{\tilde{\Omega }_{2m } } )^{-1} ,\ \ \ k_1 = \tilde{k}_{2m+1}  \circ (Pr|_{\tilde{\Omega }_{2m+1 } } )^{-1}. \nonumber
		\end{equation}
		where
		\begin{equation}
			\tilde{k}_m = F_0^{-1 } A_m^{-1 } H^{-1} \phi,\ \ \ \ \ m \in \mathbb{Z}, \nonumber 
		\end{equation}
		and
		\begin{equation}
			F_0 = \exp{\left( (\lambda^{-1 } + \lambda ) \frac{r^2}{2} \left( \begin{array}{cc}
					0 & 1 \\
					1 & 0
				\end{array} \right) \right) }. \nonumber
		\end{equation}
		Then we have
		\begin{align}
			g &= \overline{k_m(\overline{\lambda }^{-1 } ) }^t \left\{ \left(
			\begin{array}{cc}
				-1 & 0 \\
				0 & 1
			\end{array}
			\right) + 2\left(\frac{a^2}{\gamma^2} -1 \right) \frac{\gamma}{\pi} \left\{ (F_0)^2 + \left(
			\begin{array}{cc}
				0 & i \\
				-i & 0
			\end{array}
			\right) \right\} \right\} k_m(\lambda ), \nonumber
		\end{align}
		and \(k_m \) satisfies
		\begin{equation}
			k_m(\lambda) = (Id + o(\lambda) ) \left( \begin{array}{cc}
				r^{-1 } & 0\\
				0 & r
			\end{array} \right) \ \ {\rm as}\ \ \lambda \rightarrow 0 \ \ {\rm on}\ \Omega_m, \ \ \ m=0,1. \nonumber
		\end{equation}
	\end{prop}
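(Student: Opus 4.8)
Write $J=\begin{pmatrix}0&1\\1&0\end{pmatrix}$, so that $F_0=\exp\!\big((\lambda^{-1}+\lambda)\tfrac{r^2}{2}J\big)$ and $\phi_0(r,\lambda)=\exp\!\big(\lambda^{-1}\tfrac{r^2}{2}J\big)$ commute, being powers of $J$. The plan is to reduce both assertions to the factorization $\phi=HA_m\phi_0K_mC$ of Proposition 4.3 (taken at $z=r$). From $\tilde{k}_m=F_0^{-1}A_m^{-1}H^{-1}\phi$ and $\phi=HA_m\phi_0K_mC$ one reads off $\tilde{k}_m=F_0^{-1}\phi_0K_mC$, and since $F_0^{-1}\phi_0=\exp\!\big(-\lambda\tfrac{r^2}{2}J\big)$ this says $\tilde{k}_m(\lambda)=\exp\!\big(-\lambda\tfrac{r^2}{2}J\big)\,K_m(r,\lambda)\,C(r)$, equivalently $\phi=HA_mF_0\tilde{k}_m$.

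For the identity for $g$, substitute $\phi=HA_mF_0\tilde{k}_m$ into $g=\overline{\phi(\overline{\lambda}^{-1})}^t\begin{pmatrix}1&0\\0&-1\end{pmatrix}\phi(\lambda)$. Since $H\in(\Lambda{\rm SU}_{1,1})_{\sigma}$, the relation $\overline{H(\lambda)}^t\begin{pmatrix}1&0\\0&-1\end{pmatrix}H(\lambda)=\begin{pmatrix}1&0\\0&-1\end{pmatrix}$ on $S^1$ continues analytically and removes the $H$-factors. As $J$ is real symmetric, $\overline{F_0(\overline{\lambda}^{-1})}^t=F_0(\lambda)$; writing $A_m=\begin{pmatrix}\sqrt{\lambda}&0\\0&\sqrt{\lambda}^{-1}\end{pmatrix}A_m^0$ with $A_m^0$ independent of $\lambda$, the diagonal $\sqrt{\lambda}$-factor cancels against $\begin{pmatrix}1&0\\0&-1\end{pmatrix}$ inside $\overline{(\cdot)(\overline{\lambda}^{-1})}^t\begin{pmatrix}1&0\\0&-1\end{pmatrix}(\cdot)$. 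What remains is $g=\overline{\tilde{k}_m(\overline{\lambda}^{-1})}^t\,F_0(\lambda)\,N_0\,F_0(\lambda)\,\tilde{k}_m(\lambda)$ with the constant matrix $N_0=\overline{A_m^0}^t\begin{pmatrix}1&0\\0&-1\end{pmatrix}A_m^0$. A direct evaluation of this $2\times2$ product from the explicit $A_m$ of Proposition 4.3 gives, after the bookkeeping, $N_0=\begin{pmatrix}-1&0\\0&1\end{pmatrix}+2\big(\tfrac{a^2}{\gamma^2}-1\big)\tfrac{\gamma}{\pi}\big(\mathrm{Id}+\begin{pmatrix}0&i\\-i&0\end{pmatrix}\big)$, the $m$-dependence dropping out. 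Finally $J$ anticommutes with both $\begin{pmatrix}1&0\\0&-1\end{pmatrix}$ and $\begin{pmatrix}0&i\\-i&0\end{pmatrix}$, so $F_0XF_0=X$ for every $X$ anticommuting with $J$, while $F_0\cdot\mathrm{Id}\cdot F_0=F_0^2$; conjugating $N_0$ by $F_0$ therefore produces exactly the bracketed matrix in the statement.

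For the asymptotics, start from $\tilde{k}_m(\lambda)=\exp\!\big(-\lambda\tfrac{r^2}{2}J\big)K_m(r,\lambda)C(r)$. As $\lambda\to0$ inside $\tilde{\Omega}_m$ the first factor tends to $\mathrm{Id}$ (it is entire in $\lambda$, with value $\mathrm{Id}$ at $\lambda=0$), while $x=\lambda^{-1}\tfrac{r^2}{2}e^{-im\pi}$ goes to infinity within the sector $-\tfrac{3}{2}\pi<\arg x<\tfrac{\pi}{2}$ where Proposition 4.2 applies. Each entry of $K_m(r,\lambda)$ is a polynomial in $T_1,T_2$ and their first derivatives evaluated at $x$, with coefficients bounded as $\lambda\to0$; since $|T_1(x)|\le C_1|x|^{-2}$, $|T_2(x)|\le C_2|x|^{-1}$ and, by Cauchy estimates on those bounds, $|T_1'(x)|=O(|x|^{-3})$, $|T_2'(x)|=O(|x|^{-2})$, we conclude $K_m(r,\lambda)\to\mathrm{Id}$. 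Hence $\tilde{k}_m(\lambda)\to C(r)$ as $\lambda\to0$ on $\tilde{\Omega}_m$; more precisely $K_m-\mathrm{Id}$ and $\exp(-\lambda\tfrac{r^2}{2}J)-\mathrm{Id}$ are both $O(\lambda)$, so $\tilde{k}_m(\lambda)C(r)^{-1}-\mathrm{Id}=O(\lambda)$, which is the asserted expansion $\tilde{k}_m=(\mathrm{Id}+o(\lambda))C(r)$.

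The structural steps above are straightforward; the laborious one is the explicit computation of $N_0=\overline{A_m^0}^t\begin{pmatrix}1&0\\0&-1\end{pmatrix}A_m^0$, which intertwines the radicals $\sqrt{a/\gamma},\sqrt{\gamma/a},\sqrt{2\gamma/a}$, the phases $e^{\pm i\pi/4}$, and the $\begin{pmatrix}i^m&0\\0&i^{-m}\end{pmatrix}$ and $\begin{pmatrix}1&0\\2im&1\end{pmatrix}$ factors that make up $A_m$ according to Proposition 4.3. The two substantive checks there are that the $m$-dependence cancels and that the coefficient comes out precisely as $2\big(\tfrac{a^2}{\gamma^2}-1\big)\tfrac{\gamma}{\pi}$ times $\mathrm{Id}+\begin{pmatrix}0&i\\-i&0\end{pmatrix}$. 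Note that $\det\big(F_0^2+\begin{pmatrix}0&i\\-i&0\end{pmatrix}\big)=0$, so the middle matrix for $g$ is $\begin{pmatrix}-1&0\\0&1\end{pmatrix}$ plus a correction of rank at most one, vanishing identically exactly when $a=\gamma$, in accordance with the Remark following Proposition 4.3.
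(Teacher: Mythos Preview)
Your proof is correct and follows essentially the same route as the paper's own proof: both start from the factorization $\phi=HA_m\phi_0K_mC$ of Proposition~4.3, compute the middle block $\overline{A_m}^t\,\overline{H}^t\,\mathrm{diag}(1,-1)\,H\,A_m=\mathrm{diag}(-1,1)+2(\tfrac{a^2}{\gamma^2}-1)\tfrac{\gamma}{\pi}\bigl(\mathrm{Id}+\begin{smallmatrix}0&i\\-i&0\end{smallmatrix}\bigr)$, and then pass the $\phi_0$-factors across via $F_0\,X\,F_0$ (equivalently, the paper's $B_0^{-1}$ rewriting), with the asymptotic claim following from $K_m\to\mathrm{Id}$. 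Your account is in fact more explicit than the paper's at each step (the $\sqrt{\lambda}$-cancellation in $A_m$, the anticommutation argument for $F_0XF_0=X$, and the Cauchy-estimate justification for the derivative bounds on $T_1,T_2$), but the underlying argument is the same.
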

	\begin{proof}
		Since \(\tilde{k}_{m+2}(\lambda e^{i 2\pi } ) = \tilde{k}_m(\lambda ) \) for \(\lambda = e^{\tilde{\lambda } }\ (\tilde{\lambda } \in \tilde{\Omega }_m) \), \(k_0,k_1\) are well-defined. From Proposition 4.3, for fixed \(r \) we have
		\begin{align}
			g &= C \overline{K_m(\overline{\lambda}^{-1} ) }^t \overline{\phi_0(\overline{\lambda}^{-1} ) }^t \left\{ \left(
			\begin{array}{cc}
				-1 & 0 \\
				0 & 1
			\end{array}
			\right) + 2\left(\frac{a^2}{\gamma^2} -1 \right) \frac{\gamma}{\pi} \left(
			\begin{array}{cc}
				1 & i \\
				-i & 1
			\end{array}
			\right) \right\} \nonumber \\
			&\hspace{6cm} \cdot \phi_0(\lambda) K_m(\lambda) C \nonumber \\
			&= C \overline{K_m(\overline{\lambda}^{-1} ) }^t \overline{B_0(\overline{\lambda}^{-1} ) }^{t -1} \nonumber \\
			&\ \ \ \ \ \cdot  \left\{ \left(
			\begin{array}{cc}
				-1 & 0 \\
				0 & 1
			\end{array}
			\right) + 2\left(\frac{a^2}{\gamma^2} -1 \right) \frac{\gamma}{\pi} \left\{ (F_0)^2 + \left(
			\begin{array}{cc}
				0 & i \\
				-i & 0
			\end{array}
			\right) \right\} \right\} \cdot \nonumber \\
			&\hspace{5cm} \cdot B_0(\lambda)^{-1} K_m(\lambda) C, \nonumber
		\end{align}
		where
		\begin{equation}
			B_0(r,\lambda) = \exp{\left( \lambda \frac{r^2}{2} \left( \begin{array}{cc}
					0 & 1 \\
					1 & 0
				\end{array} \right) \right) }. \nonumber
		\end{equation}
		Since
		\begin{align}
			\tilde{k}_m(\lambda) &= B_0(\lambda)^{-1} K_m(\lambda) C \nonumber \\ 
			&= F_0^{-1} A_m^{-1} H^{-1} \phi \nonumber \\
			&= (Id + o(\lambda) ) C \ \ {\rm as}\ \ \lambda \rightarrow 0 \ \ {\rm on}\ \tilde{\Omega }_m, \nonumber
		\end{align}
		we obtain the stated results. \qed
	\end{proof}

	From Proposition 5.2 and observation of Remark 4.3, if we choose \(a = \gamma \)
	\begin{equation}
		g(\lambda) = \overline{k_m(\overline{\lambda}^{-1} ) }^t \left(
		\begin{array}{cc}
			-1 & 0 \\
			0 & 1
		\end{array}
		\right) k_m(\lambda) \ \ \ {\rm on} \ \ \Omega_m,\ \ \ (m = 0,1), \nonumber
	\end{equation}
	where \(k_m = (Id + o(\lambda) )C \) as \(\lambda \rightarrow 0 \) on \(\Omega_m \).\vspace{2mm}
	
	This appears to have the form of the Birkhoff factorization of \(g \), but in fact \(k_m \) is only defined on \(\Omega_m \) (and is multivalued on \(\mathbb{C}^* \)). However, the above calculation shows that \(a = \gamma \) is a ``good candidate'' for constructing a Birkhoff factorization. From this observation, we construct \(B_- \in (\Lambda^- {\rm SL}_2 \mathbb{C})_{\sigma} \) and \(B_+ \in (\Lambda^+ {\rm SL}_2 \mathbb{C})_{\sigma} \) of the Birkhoff factorization \(g = B_- {\rm diag}(1,-1)B_+ \) by using \(k_0,k_1 \). \vspace{1mm}
	
	To summarize so far,
	\begin{enumerate}
		\item [(R1)] we showed the equivalence between the Iwasawa factorization of \(\phi\) and the Birkhoff factorization of \(g\),\vspace{2mm}
		
		\item [(R2)] we found a ``good candidate'' \(a=\gamma\) for giving the Birkhoff factorization and gave a ``local factoirzation'' \(g(\lambda) = \overline{k_m(\overline{\lambda}^{-1})}^t {\rm diag}(-1,1) k_m(\lambda)\),
	\end{enumerate}\vspace{2mm}
	
	From the results (R1), (R1), we give the Iwasawa factorization of \(\phi\). Our strategy to give the Iwasawa factorization is given as follows.
	\begin{enumerate}
		\item [(S1)] under the assumption of the existence of the Birkhoff factorization, we investigate the properties of \(h_{\pm} = B_+k_m^{-1}\),\vspace{1mm}
		
		\item [(S2)] we show the inverse of (a), i.e. if there exists sectionally-holomorphic functions \(h_{\pm}\) such that \(h_{\pm}\) satisfy the properties in (a) then \(B_+ = h_{\pm}k_m\) give the Birkhoff factorization of \(g \),\vspace{1mm}
		
		\item [(S3)] we show that the existence of \(h_{\pm}\) in (b) is equivalent to the existence of solution of certain Riemann-Hilbert problem,\vspace{1mm}
		
		\item [(S4)] we solve the Riemann-Hilbert problem by using the Vanishing Lemma,\vspace{1mm}
		
		\item [(S5)] we construct the desired \(h_{\pm}\) from the solution of Riemann-Hilbert problem.
	\end{enumerate}\vspace{2mm}
	
	First, we consider the problem (S1). The difference between \(k_0,k_1\) and \(B_+\) is given as follows.
	\begin{prop}
		If \(g \) admits a Birkhoff factorization
		\begin{equation}
			g = B_- \left(\begin{array}{cc}
				1 & 0 \\
				0 & -1
			\end{array}\right) B_+,\ \ \ \ \ B_+ \in (\Lambda^{+,>0} {\rm SL}_2 \mathbb{C})_{\sigma}, \nonumber
		\end{equation}
		for \(\gamma = a\), then
		\begin{align}
			&h_+ = B_+k_0^{-1}:\Omega_0 \rightarrow {\rm SL}_n\mathbb{C},\ \ \ h_- = B_+k_1^{-1}:\Omega_1 \rightarrow {\rm SL}_n\mathbb{C}, \nonumber
		\end{align} 
		%\(h_+(\lambda) = B_+(\lambda)k_0(\lambda)^{-1}\) (\(\lambda \in \Omega_0\)) and \(h_-(\lambda) = B_+(\lambda)k_1(\lambda)^{-1}\) (\(\lambda \in \Omega_1\)) 
		satisfiy the following conditions:
		\begin{enumerate}
			\item [(i)] \(h_+ k_0 = h_- k_1 \) on \(\Omega_0 \cap \Omega_1 \),\vspace{1mm}
			
			\item [(ii)] \(h_{\pm} k_m \) satisfies the twisted condition \((h_{\pm } k_m)(-\lambda) = \sigma((h_{\mp } k_m)(\lambda)) \),\vspace{1mm}
			
			\item [(iii)] \(h_{\pm } k_m \) can be extended to \(\lambda = 0 \),\vspace{1mm}
			
			\item [(iv)] \(g(\lambda) = \overline{(h_{\pm} k_m)(\overline{\lambda }^{-1} ) }^t \left( \begin{array}{cc}
				k^{-1} & 0 \\
				0 & -k
			\end{array} \right) (h_{\pm} k_m)(\lambda),\ \ \ k \in \mathbb{R}\).
		\end{enumerate}
	\end{prop}
	\begin{proof}
		Since \(B_+ \in (\Lambda^+ {\rm SL}_2 \mathbb{C})_{\sigma}\), we obtain (i), (ii), (iii). From the proof of Proposition 5.1, we have
		\begin{equation}
			\overline{B_+(\overline{\lambda}^{-1})}^t = B_-(\lambda)\left(\begin{array}{cc}
				k & 0 \\
				0 & k^{-1}
			\end{array}\right),\ \ \ \ \ k \in \mathbb{R}^*. \nonumber
		\end{equation}
		Thus, we obtain (iv).
		\qed
	\end{proof}
	\begin{rem}
		If \(g\) admits a Birkhoff factorization \(g=B_-{\rm diag}(1,-1)B_+\), \(B_+ \in (\Lambda^+ {\rm SL}_2 \mathbb{C})_{\sigma}\) then \(g\) admits a Birkhoff factorization \(g=\tilde{B}_-{\rm diag}(1,-1)\tilde{B}_+\), \(\tilde{B}_+ \in (\Lambda^{+,>0} {\rm SL}_2 \mathbb{C})_{\sigma}\).
	\end{rem}
	\begin{rem}
		In Proposition 5.3, we put
		\begin{equation}
			\tilde{h}_{\pm} = \left(\begin{array}{cc}
				\sqrt{|k|^{-1}} & 0 \\
				0 & \sqrt{|k|}
			\end{array}\right)h_{\pm}, \nonumber
		\end{equation}
		then \(\tilde{h}_{\pm}\) satisfy the conditions (i), (ii), (iii) and
		\begin{equation}
			g = \frac{k}{|k|}\overline{(\tilde{h}_{\pm} k_m)(\overline{\lambda }^{-1} ) }^t \left( \begin{array}{cc}
				1 & 0 \\
				0 & -1
			\end{array} \right) (\tilde{h}_{\pm} k_m)(\lambda). \nonumber
		\end{equation}
		Thus, without loss of generality we can assume \(K \in \{-1,1\}\) in (iv) of Proposition 5.3.
	\end{rem}
	
	\subsection{The Riemann-Hilbert problem from the Birkhoff factorization}
	We consider the problem (S2). Given holomorphic functions \(h_{\pm}\) which satisfies the conditions in Proposition 5.3 then we obtain the global Iwasawa factorization as follows. Here, the conditions (1), (2), (3), (4) in Proposition 5.4 are induced by the conditions in Proposition 5.3 and the propserties of \(k_0, k_1\).
	%To obtain the Birkhoff factorization of \(g \), we seek \({\rm SL}_2 \mathbb{C} \)-valued holomorphic functions \(h_{\pm } \) such that \(h_+, h_- \) satisfy the conditions
	%\begin{itemize}
	%	\item \(h_{\pm}k_m \in \Lambda {\rm SL}_2 \mathbb{C} \), i.e. \(h_+ k_0 = h_- k_1 \) on \(\Omega_0 \cap \Omega_1 \),
	
	%	\item \(h_{\pm} k_m \in (\Lambda {\rm SL}_2 \mathbb{C} )_{\sigma } \), \\
	%	 i.e. \(h_{\pm} k_m \) satisfies the twisted condition \((h_{\pm } k_m)(-\lambda) = \sigma((h_{\mp } k_m)(\lambda)) \),
	
	%	 \item \(h_{\pm } k_m \in (\Lambda^+ {\rm SL}_2 \mathbb{C} )_{\sigma } \), i.e. \(h_{\pm } k_m \) can be extended to \(\lambda = 0 \),
	
	%	 \item \(g(\lambda) = \overline{(h_{\pm} k_m)(\overline{\lambda }^{-1} ) }^t \left( \begin{array}{cc}
		%	 	-1 & 0 \\
		%	 	0 & 1
		%	 \end{array} \right) (h_{\pm} k_m)(\lambda) \).
	%\end{itemize}
	
	%Suppose that there exist holomophic functions \(h_{\pm} \) which satisfy the conditions above, then we obtain the global Birkhoff factorization as follows.
	\begin{prop}
		Let \(H_+, H_- \) be the upper-half plane and lower-half plane of \(\mathbb{C}^* \) respectively. We put
		\begin{equation}
			\Gamma_+ = \{\lambda \in \mathbb{C}^*\ | \ {\rm Re}(\lambda) > 0 \},\ \ \Gamma_- = \{\lambda \in \mathbb{C}^*\ | \ {\rm Re}(\lambda) < 0 \}. \nonumber
		\end{equation}
		
		Fix \(z = r \in \mathbb{R}_{> 0 } \) and we choose \(a = \gamma \). Suppose that there exist holomorphic maps \(h_{\pm } : H_{\pm } \rightarrow {\rm SL}_2 \mathbb{C} \) such that \(h_{\pm } \) are continuous in \(\overline{H}_{\pm } \) and \(h_{\pm } \) satisfy the conditions
		\begin{enumerate}
			\item[(1)] \(h_+ k_0 = h_- k_1 \) on \(\Gamma_+ \cup \Gamma_- \),
			\item[(2)]  \( h_{\pm }(\lambda e^{i\pi } ) = \left(
			\begin{array}{cc}
				1 & 0 \\
				0 & -1
			\end{array}
			\right) h_{\mp }(\lambda) \left(
			\begin{array}{cc}
				1 & 0 \\
				0 & -1
			\end{array}
			\right) \),
			\item[(3)] \(h_{\pm } \rightarrow \left(
			\begin{array}{cc}
				\rho & 0 \\
				0 & \rho^{-1}
			\end{array}
			\right) \) as \(\lambda \rightarrow 0 \) on \(\overline{H}_{\pm } \), where \(\rho \in \mathbb{R}^* \),
			\item [(4)] \( \overline{ h_{\pm }(\overline{\lambda}^{-1 } ) }^t \left(
			\begin{array}{cc}
				1 & 0 \\
				0 & -1
			\end{array}
			\right) h_{\pm }(\lambda) = \epsilon \left(
			\begin{array}{cc}
				1 & 0 \\
				0 & -1
			\end{array}
			\right),\ \ \ \epsilon \in \{-1, 1 \} \),
		\end{enumerate}
		then \(g \) admits a Birkhoff factorization
		\begin{equation}
			g = B_- \left( \begin{array}{cc}
				1 & 0 \\
				0 & -1
			\end{array} \right) B_+,\ \ \ {\rm where}\ \ B_- \in (\Lambda^- {\rm SL}_2 \mathbb{C} )_{\sigma },\ \ B_+ \in (\Lambda^+ {\rm SL}_2 \mathbb{C} )_{\sigma }, \nonumber
		\end{equation}
		at \(z = r \).
	\end{prop}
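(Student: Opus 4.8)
The plan is to build the two factors of the Birkhoff factorization of $g$ directly from the patching maps $h_\pm$ and the local data $k_0,k_1$: conditions (1)--(3) will produce a globally-defined holomorphic $B_+\in(\Lambda^+{\rm SL}_2\mathbb{C})_\sigma$, and condition (4) together with the identity $g(\lambda)=\overline{k_m(\overline\lambda^{-1})}^t\,{\rm diag}(-1,1)\,k_m(\lambda)$ on $\Omega_m$ (valid once $a=\gamma$, as recorded just before the statement) will let me read off $B_-$.

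First I would glue: set $B_+:=h_+k_0$ on $\overline H_+$ and $B_+:=h_-k_1$ on $\overline H_-$. Comparing the argument ranges shows $\overline H_+\subset\Omega_0$ and $\overline H_-\subset\Omega_1$, so each piece is defined and continuous on $\overline H_\pm$ and holomorphic on the open half-plane $H_\pm$; by (1) the two pieces agree on the common boundary $\mathbb{R}^*=\overline H_+\cap\overline H_-$, so $B_+$ is a well-defined continuous function on $\mathbb{C}^*$ that is holomorphic off $\mathbb{R}^*$, hence (Morera's theorem, applied to small contours straddling the real axis) holomorphic on all of $\mathbb{C}^*$. Since $k_0,k_1$ both satisfy $k_m=(Id+o(\lambda))\,{\rm diag}(r^{-1},r)$ as $\lambda\to0$, condition (3) gives $B_+(\lambda)\to{\rm diag}(\rho,\rho^{-1})\,{\rm diag}(r^{-1},r)$ as $\lambda\to0$ from either side, so $B_+$ is bounded near $0$ and extends holomorphically across $\lambda=0$ by Riemann's removable-singularity theorem. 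Hence $B_+$ is holomorphic on the whole $\lambda$-plane and $\det B_+=\det h_\pm\cdot\det k_m=1$.

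Next I would check the twisting $B_+(-\lambda)=\sigma(B_+(\lambda))$. For $\lambda\in H_+$ one has $-\lambda\in H_-$, so by (2) (which says $h_\pm(-\lambda)=\sigma(h_\mp(\lambda))$), $B_+(-\lambda)=h_-(-\lambda)\,k_1(-\lambda)=\sigma(h_+(\lambda))\,k_1(-\lambda)$; comparing with $\sigma(B_+(\lambda))=\sigma(h_+(\lambda))\,\sigma(k_0(\lambda))$, the claim reduces to $k_1(-\lambda)=\sigma(k_0(\lambda))$. This identity I would deduce from $\phi(-\lambda)=\sigma(\phi(\lambda))$ and the explicit factorization $\phi=HA_m\phi_0K_mC$ of Propositions 4.3 and 5.2: the map $\lambda\mapsto-\lambda$ carries the sheet $\tilde\Omega_m$ to $\tilde\Omega_{m-1}$, i.e. exchanges $\Omega_0\leftrightarrow\Omega_1$, and one checks (using $F_0(-\lambda)=\sigma(F_0(\lambda))=F_0(\lambda)^{-1}$ and the explicit forms of $H,A_m$) that the induced transformation is exactly conjugation by $\sigma$. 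Granting this, $B_+\in(\Lambda^+{\rm SL}_2\mathbb{C})_\sigma$; for the present statement only membership, not normalization at $\lambda=0$, is needed.

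Finally I would produce $B_-$. For $\lambda\in H_\pm$ one also has $\overline\lambda^{-1}\in H_\pm$, so
\begin{align*}
\overline{B_+(\overline\lambda^{-1})}^t\,{\rm diag}(-1,1)\,B_+(\lambda)
&=\overline{k_m(\overline\lambda^{-1})}^t\,\overline{h_\pm(\overline\lambda^{-1})}^t\,{\rm diag}(-1,1)\,h_\pm(\lambda)\,k_m(\lambda)\\
&=\epsilon\,\overline{k_m(\overline\lambda^{-1})}^t\,{\rm diag}(-1,1)\,k_m(\lambda)=\epsilon\,g(\lambda),
\end{align*}
where the middle equality is condition (4) rewritten via ${\rm diag}(-1,1)=-{\rm diag}(1,-1)$ and the last is the identity for $g$ recalled above; since the two half-plane computations give the same relation, it holds on all of $\mathbb{C}^*$ (in particular on $S^1$). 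Setting $B_-:=-\epsilon\,\overline{B_+(\overline\lambda^{-1})}^t$ therefore gives $g=B_-\,{\rm diag}(1,-1)\,B_+$, and $B_-\in(\Lambda^-{\rm SL}_2\mathbb{C})_\sigma$ because $B_+$ is holomorphic near $\lambda=0$ (so $\overline{B_+(\overline\lambda^{-1})}^t$ is holomorphic near $\lambda=\infty$), $\det B_-=\overline{\det B_+}=1$, and the twisting of $B_-$ follows from that of $B_+$ ($\sigma$ being defined by a real symmetric matrix). This gives the Birkhoff factorization of $g$ at $z=r$. The step I expect to be the real obstacle is the twisting identity $k_1(-\lambda)=\sigma(k_0(\lambda))$ --- the only place where one must keep careful track of the branch of $\log\lambda$ buried in $H$ and $A_m$ and of the sheet index $m$; the remaining steps are the routine gluing / Morera / removable-singularity package, and the genuinely hard analytic input (existence of $h_\pm$ satisfying (1)--(4), via the Vanishing Lemma for Riemann--Hilbert problems) is not needed for this proposition.
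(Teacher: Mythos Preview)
Your proof is correct and follows essentially the same route as the paper's: glue $B_+:=h_+k_0$ on $\overline{H}_+$ and $h_-k_1$ on $\overline{H}_-$, use (1) plus Painlev\'e/Morera for holomorphicity, (2) plus the twist relation $k_0(\lambda e^{i\pi})=\sigma(k_1(\lambda))$ for the $\sigma$-twisting, (3) plus the asymptotic $k_m=(Id+o(\lambda))C$ for the extension across $\lambda=0$, and (4) plus the identity $g=\overline{k_m(\overline\lambda^{-1})}^t\,{\rm diag}(-1,1)\,k_m$ to produce $B_-=\pm\epsilon\,\overline{B_+(\overline\lambda^{-1})}^t$. The only noteworthy difference is that you flag the twisting identity for $k_m$ as the delicate branch-tracking step, whereas the paper simply uses it without comment; your caution there is well placed, but the argument you sketch (via $\phi(-\lambda)=\sigma(\phi(\lambda))$ and the factorization of Proposition~4.3) is exactly the right one.
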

	\begin{proof}
		From (1), we define a continuous map \(B_+ : \mathbb{C}^* \rightarrow {\rm SL}_2 \mathbb{C} \)  by
		\begin{align}
			B_+ = \left\{ \begin{array}{cc}
				h_+ k_0 & {\rm on}\ \overline{H}_+, \\
				h_- k_1 & {\rm on}\ \overline{H}_-.
			\end{array}  \right. \nonumber
		\end{align}
		Since \(h_+ k_0, h_- k_1 \) are holomorphic on \(H_+, H_- \) and continuous on \(\overline{H}_+, \overline{H }_- \) respectively, from Painleve's Theorem we obtain \(B_+ \in \Lambda {\rm SL }_2 \mathbb{C} \).
		For \(\lambda \in \overline{H }_+ \), from (2) we obtain
		\begin{align}
			B_+(\lambda e^{i\pi } ) &= h_+(\lambda e^{i\pi } ) k_0(\lambda e^{i\pi } ) \nonumber \\
			&= \left(
			\begin{array}{cc}
				1 & 0 \\
				0 & -1
			\end{array}
			\right) h_-(\lambda) \left(
			\begin{array}{cc}
				1 & 0 \\
				0 & -1
			\end{array}
			\right) \left(
			\begin{array}{cc}
				1 & 0 \\
				0 & -1
			\end{array}
			\right) k_1(\lambda) \left(
			\begin{array}{cc}
				1 & 0 \\
				0 & -1
			\end{array}
			\right) \nonumber \\
			&= \left(
			\begin{array}{cc}
				1 & 0 \\
				0 & -1
			\end{array}
			\right) B_+(\lambda) \left(
			\begin{array}{cc}
				1 & 0 \\
				0 & -1
			\end{array}
			\right). \nonumber
		\end{align}
		Similarly, for \(\lambda \in \overline{H }_- \) we obtain
		\begin{equation}
			B_+(\lambda e^{i\pi } ) = \left(
			\begin{array}{cc}
				1 & 0 \\
				0 & -1
			\end{array}
			\right) B_+(\lambda) \left(
			\begin{array}{cc}
				1 & 0 \\
				0 & -1
			\end{array}
			\right). \nonumber
		\end{equation}
		Thus, we have \(B_+ \in (\Lambda {\rm SL}_2 \mathbb{C} )_{\sigma } \).
		
		From the condition (3),
		\begin{align}
			B_+ &= h_{\pm } k_m \nonumber \\
			&= (Id + o(\lambda))  \left( \begin{array}{cc}
				\rho r^{-1 } & 0 \\
				0 & \rho^{-1 } r
			\end{array} \right)\ \ \ {\rm as}\ \ \lambda \rightarrow 0\ \ {\rm on}\ \overline{H}_{\pm },\ \ \ (m = 0,1), \nonumber
		\end{align}
		and then \(B_+ \in (\Lambda^+ {\rm SL}_2 \mathbb{C} )_{\sigma } \). We put \(B_-(\lambda ) = \epsilon \overline{B_+(\overline{\lambda }^{-1} ) }^t \in (\Lambda^- {\rm SL }_2 \mathbb{C} )_{\sigma } \), then from (4) and Proposition 5.2, we have
		\begin{align}
			g(\lambda) &= \overline{k_m(\overline{\lambda}^{-1} ) }^t \left(
			\begin{array}{cc}
				1 & 0 \\
				0 & -1
			\end{array}
			\right) k_m(\lambda) \nonumber \\
			&= \epsilon \overline{k_m(\overline{\lambda}^{-1} ) }^t  \overline{ h_{\pm }(1/\overline{\lambda} ) }^t \left(
			\begin{array}{cc}
				1 & 0 \\
				0 & -1
			\end{array}
			\right) h_{\pm }(\lambda) k_m(\lambda) \nonumber \\
			&= \epsilon \overline{B_+(\overline{\lambda }^{-1} ) }^t \left(
			\begin{array}{cc}
				1 & 0 \\
				0 & -1
			\end{array}
			\right) B_+(\lambda ) \nonumber \\
			&= B_-(\lambda) \left( \begin{array}{cc}
				1 & 0 \\
				0 & -1
			\end{array} \right) B_+(\lambda ). \nonumber
		\end{align}
		Hence, we obtain a Birkhoff factorization of \(g \) at \(z = r \). \qed
	\end{proof}
	From Proposition %5.3,
	5.4, we obtain a global Birkhoff factorization if we find \(h_{\pm } \) which satisfy (1), (2), (3), (4). Thus, it is enough to seek \(h_{\pm} \) in Proposition 5.3 for giving the Birkhoff factorization of \(g \). To simplify the problem, we consider the following setting. It is the problem (S3).\vspace{2mm}
	
	Suppose that there exists a solution \(h = \{h_{\pm } \} \) in Proposition 5.3, we put
	\begin{equation}
		Y_+ = \epsilon \left(
		\begin{array}{cc}
			\rho^{-1} & 0 \\
			0 & \rho
		\end{array}
		\right) (h_+^t)^{-1},\ \ \ Y_- = \epsilon \left(
		\begin{array}{cc}
			\rho^{-1} & 0 \\
			0 & \rho
		\end{array}
		\right) (h_-^t)^{-1},\ \ \ \epsilon \in \{-1,1 \}, \nonumber
	\end{equation}
	and
	\begin{align}
		G &= (k_0 k_1^{-1} )^t \nonumber \\
		&= \left\{ \begin{array}{cc}
			\left(
			\begin{array}{cc}
				1 + ie^{- \frac{r^2}{2} (\lambda^{-1} + \lambda ) } & -i e^{- \frac{r^2}{2} (\lambda^{-1} + \lambda ) } \\
				i e^{- \frac{r^2}{2} (\lambda^{-1} + \lambda ) } & 1 - ie^{- \frac{r^2}{2} (\lambda^{-1} + \lambda ) }
			\end{array}
			\right) & {\rm on}\ \ \Gamma_+, \\
			\left(
			\begin{array}{cc}
				1 - ie^{\frac{r^2}{2} (\lambda^{-1} + \lambda ) } & -i e^{\frac{r^2}{2} (\lambda^{-1} + \lambda ) } \\
				i e^{\frac{r^2}{2} (\lambda^{-1} + \lambda ) } & 1 + ie^{\frac{r^2}{2} (\lambda^{-1} + \lambda ) }
			\end{array}
			\right)& {\rm on}\ \ \Gamma_-.  \end{array} \right. \nonumber
	\end{align}
	Since \(h_{\pm } \) satisfy the condition (1), the condition (2) is equivalent to
	\begin{enumerate}
		\item[(2)']	\( h_{\pm } \rightarrow \epsilon \left(
		\begin{array}{cc}
			\rho^{-1} & 0 \\
			0 & \rho
		\end{array}
		\right)\ {\rm as}\ \lambda \rightarrow \infty, \ \ \ \epsilon \in \{-1,1 \} \). 
	\end{enumerate}
	From the conditions (1), (2)', (3), \(Y = \{Y_{\pm} \} \) satisfy
	\begin{enumerate}
		\item[(i)] \(Y_{\pm } \) are holomorphic in \(H_{\pm } \) and continuous in \(\overline{H_{\pm } } \) respectively.
		\vspace{2mm} 
		
		\item[(ii)] \(Y_+ = Y_- G \) on \(\Gamma = \Gamma_+ \cup \Gamma_- \), \vspace{2mm}
		
		\item[(iii)]  \(Y_{\pm } \rightarrow Id \) as \(\lambda \rightarrow \infty \).
	\end{enumerate}
	This is the Riemann-Hilbert problem for \(G \) on \(\Gamma \) in the manner of chapter 3 of \cite{FIKN2006}. If there exists a solution of the Riemann-Hilbert problem, then we construct \(h_{\pm} \) satisfying (1), (2), (3) in Proposition %5.3
	5.4 from the solution \(Y = \{Y_{\pm}\} \). Furthermore, we prove that \(h_{\pm} \) also satisfy (4) in Proposition %5.3.
	5.4. \vspace{3mm}
	
	\subsection{The Iwasawa factorization from the Rimann-Hilbert problem}
	We consider the problem (S4). To obtain a solution of this problem, we use the Vanishing Lemma (Corollary 3.2 of \cite{FIKN2006}):
	\begin{lem} [The Vanishing Lemma]
		The Riemann Hilbert problem determined by a pair \((\Gamma, G) \) has a solution if and only if the corresponding homogenous Riemann-Hilbert problem (in which the condition \(Y \rightarrow Id \) is replaced by \(Y \rightarrow 0 \)) has only the trivial solution. 
	\end{lem}
	To use the Vanishing Lemma, we show the following Lemma by the same way as section 5 of \cite{GIL20152}:
	\begin{lem}
		Let \(Y_{\pm } \) be a solution of the homogenous Riemann-Hilbert problem for \(G \) on \(\Gamma \), then
		\begin{enumerate}
			\item [(a)] \(\int_{\Gamma} Y_+(\lambda) \overline{Y_-(\overline{\lambda} ) }^t d\lambda = 0  \),
			\item [(b)] \(\int_{\Gamma} Y_-(\lambda) \overline{Y_+(\overline{\lambda} ) }^t d\lambda = 0  \).
		\end{enumerate}  
	\end{lem}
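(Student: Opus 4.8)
The plan is to prove both identities by the classical contour-integration and reflection argument, as in section 5 of \cite{GIL20152}. Only the holomorphy of $Y_\pm$, their continuity up to $\Gamma$, and the normalization $Y_\pm \to 0$ at infinity enter; the jump relation $Y_+ = Y_- G$ is not used for the Lemma itself (it is needed only afterwards, when (a) and (b) are combined with the explicit form of $G$ to run the Vanishing Lemma).

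Consider (a). Since $Y_-$ is holomorphic on the lower half-plane $H_-$ and continuous on $\overline{H_-}$, the reflected map $\lambda \mapsto \overline{Y_-(\overline{\lambda})}^{\,t}$ is holomorphic on the upper half-plane $H_+$ (the complex conjugate of $f(\overline{\lambda})$ is holomorphic whenever $f$ is), continuous on $\overline{H_+}$, and tends to $0$ as $\lambda \to \infty$ in $\overline{H_+}$. Hence
\[
W_1(\lambda) := Y_+(\lambda)\,\overline{Y_-(\overline{\lambda})}^{\,t}
\]
is holomorphic on $H_+$, continuous on $\overline{H_+}$, and decays at infinity; by the normalization of the homogeneous Riemann-Hilbert problem (its solution is $O(\lambda^{-1})$ at infinity in the sense of chapter 3 of \cite{FIKN2006}) the product is in fact $O(\lambda^{-2})$. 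Applying Cauchy's theorem on $H_+ \cap \{\,|\lambda| < R\,\}$ and letting $R \to \infty$ --- the semicircular arc contributes $O(R^{-1}) \to 0$, and no arc around $\lambda = 0$ is needed because $W_1$ is continuous, hence bounded, there --- we obtain $\int_\Gamma W_1(\lambda)\,d\lambda = 0$, which is precisely (a) once one recalls that the boundary value of $W_1$ on $\Gamma$ is $Y_+(\lambda)\overline{Y_-(\overline{\lambda})}^{\,t}$.

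For (b) the same argument applies with $H_+$ and $H_-$ interchanged: $W_2(\lambda) := Y_-(\lambda)\,\overline{Y_+(\overline{\lambda})}^{\,t}$ is holomorphic on $H_-$, continuous up to $\Gamma$, and decays at infinity, so closing the contour in the lower half-plane yields $\int_\Gamma W_2(\lambda)\,d\lambda = 0$ (the reversed orientation of $\partial H_-$ only flips an overall sign).

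The only delicate points concern the two ends of the contour: one needs the homogeneous solution to decay fast enough at $\lambda = \infty$ for the large arc to drop out --- this is built into the standard formulation of the Riemann-Hilbert problem in \cite{FIKN2006} --- and to stay bounded at $\lambda = 0$, which follows from the assumed continuity of $Y_\pm$ on $\overline{H_\pm}$. Everything else is the reflection principle together with Cauchy's theorem.
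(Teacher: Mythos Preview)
Your argument is correct and is essentially the same as the paper's: both proofs observe that $Y_+(\lambda)$ and $\overline{Y_-(\overline{\lambda})}^{\,t}$ are holomorphic on the upper half-plane, continuous up to $\Gamma$, and decay at infinity, so Cauchy's theorem applied to a large semicircle gives (a), with (b) following symmetrically. You supply more detail on the decay rate (the $O(\lambda^{-2})$ bound for the product, and the bounded behaviour at $\lambda=0$), whereas the paper simply remarks that the exponential decay of $G-\mathrm{Id}$ forces sufficient decay of $Y_\pm$; either justification suffices.
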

	\begin{proof}
		Since \(G \rightarrow Id \) exponentially as \(\lambda \rightarrow \infty \), \( Y_+ \) and \(Y_- \) also do so. \(Y_+(\lambda) \), \( \overline{Y_-(\overline{\lambda}^{-1} )}^t \) are holomorphic on \( {\rm Im}(\lambda > 0) \), thus by using Cauchy's Theorem, we obtain (a). Similarly, we prove (b). \qed
	\end{proof}
	From the Vanishing Lemma and Lemma 5.2, we prove:
	\begin{cor}
		There is a solution \(Y = \{Y_{\pm } \} \) of the Riemann-Hilbert problem for \(G \) on \(\Gamma \). 
	\end{cor}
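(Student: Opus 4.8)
\emph{Proof proposal.} The plan is to verify, via the Vanishing Lemma (Corollary 5.1), that the \emph{homogeneous} Riemann--Hilbert problem for $(\Gamma,G)$ --- conditions (i), (ii) together with $Y_\pm\to 0$ (rather than $Id$) as $\lambda\to\infty$ --- has only the trivial solution; Corollary 5.1 then immediately furnishes a solution of the normalized problem. So let $Y=\{Y_\pm\}$ solve the homogeneous problem and apply Lemma 5.1, which supplies $\int_\Gamma Y_+(\lambda)\overline{Y_-(\overline\lambda)}^{\,t}\,d\lambda=0$ and $\int_\Gamma Y_-(\lambda)\overline{Y_+(\overline\lambda)}^{\,t}\,d\lambda=0$. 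Since $\lambda$ is real on $\Gamma$ we have $\overline\lambda=\lambda$, so (writing $A^{\ast}:=\overline{A}^{\,t}$) these read $\int_\Gamma Y_+Y_-^{\ast}\,d\lambda=0$ and $\int_\Gamma Y_-Y_+^{\ast}\,d\lambda=0$. Inserting the jump relation $Y_+=Y_-G$ (so $Y_+^{\ast}=G^{\ast}Y_-^{\ast}$) and adding the two identities yields the single matrix identity
\[
\int_\Gamma Y_-(\lambda)\bigl(G(\lambda)+G(\lambda)^{\ast}\bigr)Y_-(\lambda)^{\ast}\,d\lambda=0 .
\]

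The decisive step is an elementary computation with the explicit $G$: on each of $\Gamma_+$ and $\Gamma_-$ one finds
\[
\tfrac12\bigl(G(\lambda)+G(\lambda)^{\ast}\bigr)=\begin{pmatrix}1 & -iE(\lambda)\\ iE(\lambda) & 1\end{pmatrix},\qquad E(\lambda)=e^{-\frac{r^2}{2}\lvert\lambda+\lambda^{-1}\rvert},
\]
and since $\lvert\lambda+\lambda^{-1}\rvert\ge 2$ on $\Gamma$ while $r>0$, we get $0<E(\lambda)<1$; hence $G(\lambda)+G(\lambda)^{\ast}$ is Hermitian with eigenvalues $2\bigl(1\pm E(\lambda)\bigr)>0$, i.e.\ positive definite at every point of $\Gamma$. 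Consequently, for each $\lambda$ the integrand above is a positive semidefinite Hermitian matrix; writing $M=G+G^{\ast}$ and taking the trace converts the identity into $\int_\Gamma\lVert Y_-(\lambda)M(\lambda)^{1/2}\rVert^2\,d\lambda=0$ with a nonnegative continuous integrand, which forces $Y_-M^{1/2}\equiv 0$ and hence, $M^{1/2}$ being invertible, $Y_-\equiv 0$ on $\Gamma$. Since $Y_-$ is holomorphic in $H_-$ and continuous up to $\Gamma=\partial H_-$, the Schwarz reflection principle (equivalently Morera's theorem plus the identity theorem) gives $Y_-\equiv 0$ on $H_-$; then $Y_+=Y_-G\equiv 0$ on $\Gamma$, and likewise $Y_+\equiv 0$ on $H_+$. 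Thus the homogeneous problem admits only $Y\equiv 0$, and Corollary 5.1 yields the solution $Y=\{Y_\pm\}$ of the Riemann--Hilbert problem for $G$ on $\Gamma$.

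The one genuinely substantive point is the positivity of $G+G^{\ast}$, and I expect the verification of the displayed formula for it (a short but slightly fiddly matrix computation on each of $\Gamma_\pm$) to be the only place requiring care. It is worth emphasizing where this positivity comes from: the clean jump matrix $G$ is available precisely because of the choice $a=\gamma$ made before Proposition 5.3, and the bound $\lvert\lambda+\lambda^{-1}\rvert\ge 2$ on the real axis together with $r^2>0$ is exactly what keeps the exponential factor $E(\lambda)$ strictly below $1$, hence $G+G^{\ast}$ positive definite. Everything surrounding this is the standard ``vanishing-lemma'' packaging already prepared by Lemma 5.1 and Corollary 5.1.
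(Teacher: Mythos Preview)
Your argument is correct and is essentially the same as the paper's: both compute $N=G+\overline{G(\overline\lambda)}^{\,t}$ on $\Gamma_\pm$, observe that it is positive definite because the exponential factor lies in $(0,1)$, and combine this with the integral identities and the Vanishing Lemma to conclude. Your write-up is in fact more explicit than the paper's (which simply asserts ``Since $N$ are positive definite, from Lemma~5.1 we obtain $Y=0$''), spelling out the integral manipulation $\int_\Gamma Y_-(G+G^\ast)Y_-^\ast\,d\lambda=0$ and the passage from vanishing on $\Gamma$ to vanishing on $H_\pm$.
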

	\begin{proof}
		Let \(Y_{\pm } \) be a solution of the homogenous Riemann-Hilbert problem for \(G \) on \(\Gamma \),then we have
		\begin{align}
			N &= G(\lambda) + \overline{G(\overline{\lambda} )}^t \nonumber \\
			&= \left\{ \begin{array}{cc} \left(
				\begin{array}{cc}
					2 & -2i e^{-\frac{r^2}{2} (\lambda^{-1} + \lambda ) } \\
					2ie^{-\frac{r^2}{2} (\lambda^{-1} + \lambda )  } & 2
				\end{array}
				\right) & {\rm on } \ \ \Gamma_+, \\
				\left(
				\begin{array}{cc}
					2 & -2i e^{\frac{r^2}{2} (\lambda^{-1} + \lambda ) } \\
					2ie^{\frac{r^2}{2} (\lambda^{-1} + \lambda )  } & 2
				\end{array}
				\right) & {\rm on } \ \ \Gamma_-.   \end{array} \right. \nonumber
		\end{align} Since \(N \) are positive definite, from Lemma 5.2 we obtain
		\(Y = 0 \). By using the Vanishing Lemma, we obtain the solution of the Riemann Hilbert problem for \(G \) on \(\Gamma \). \qed
	\end{proof}
	We follow section 3 of \cite{FIKN2006} and prove the following Lemmas:
	\begin{lem}
		Let \(Y \) be a solution of Riemann-Hilbert problem for (i), (ii), (iii). \\
		Then \(Y \) is unique.
	\end{lem}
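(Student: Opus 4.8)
The plan is to run the standard uniqueness argument for Riemann--Hilbert problems, in the spirit of section 3 of \cite{FIKN2006}: if $Y^{(1)}=\{Y^{(1)}_\pm\}$ and $Y^{(2)}=\{Y^{(2)}_\pm\}$ both satisfy (i), (ii), (iii), then the ``ratio'' $R:=Y^{(1)}(Y^{(2)})^{-1}$ is forced to extend to an entire function normalized to $Id$ at infinity, hence $R\equiv Id$ and $Y^{(1)}=Y^{(2)}$. The only point requiring a little care is the junction point $\lambda=0$ of the contour $\Gamma=\Gamma_+\cup\Gamma_-$, which is not covered by (i)--(iii) directly.

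First I would record two properties of the jump matrix $G$. Computing the $2\times2$ determinant on $\Gamma_+$ and on $\Gamma_-$ gives $\det G\equiv1$ on all of $\Gamma$. Moreover, since $\lambda$ is real on $\Gamma_\pm$, we have ${\rm Re}(\lambda^{-1}+\lambda)\to-\infty$ both as $\lambda\to0$ and as $\lambda\to\infty$ along $\Gamma$, so the off-diagonal entries of $G$ decay exponentially at both ends and $G(\lambda)\to Id$ as $\lambda\to0$, exactly as $G(\lambda)\to Id$ as $\lambda\to\infty$ (the latter was already used in proving the integral identities). For a Riemann--Hilbert problem posed ``in the manner of chapter 3 of \cite{FIKN2006}'' this means that an admissible solution $Y$ is bounded near $\lambda=0$, and this boundedness is the extra input the argument will use.

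Given two solutions $Y^{(1)},Y^{(2)}$, I would proceed as follows. First, I would show $\det Y^{(j)}\equiv1$ for $j=1,2$: by (ii), $\det Y^{(j)}_+=\det Y^{(j)}_-\cdot\det G=\det Y^{(j)}_-$ on $\Gamma$, so $\det Y^{(j)}$ glues across $\Gamma$ (condition (i) and Painleve's Theorem) to a function holomorphic on $\mathbb{C}^*$, which tends to $1$ at $\infty$ by (iii) and is bounded near $0$; hence the singularity at $0$ is removable, $\det Y^{(j)}$ is a bounded entire function, and $\det Y^{(j)}\equiv1$ by Liouville. In particular $Y^{(2)}$ is invertible everywhere, and $(Y^{(2)})^{-1}$ — the adjugate of $Y^{(2)}$ — is bounded wherever $Y^{(2)}$ is. Next, I would put $R:=Y^{(1)}(Y^{(2)})^{-1}$ on $\mathbb{C}^*\setminus\Gamma=H_+\sqcup H_-$; by (ii) its boundary values agree across $\Gamma$, since $R_+=Y^{(1)}_+(Y^{(2)}_+)^{-1}=(Y^{(1)}_-G)(Y^{(2)}_-G)^{-1}=Y^{(1)}_-(Y^{(2)}_-)^{-1}=R_-$, so (again by condition (i) and Painleve's Theorem) $R$ extends holomorphically across $\Gamma$ to $\mathbb{C}^*$, and boundedness near $\lambda=0$ makes that point removable too, so $R$ is entire on $\mathbb{C}$. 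Finally, by (iii) $R\to Id\cdot Id=Id$ as $\lambda\to\infty$, so $R-Id$ is a bounded entire function vanishing at infinity, and Liouville gives $R\equiv Id$, i.e. $Y^{(1)}=Y^{(2)}$.

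I expect the only genuine obstacle to be the behaviour at the junction $\lambda=0$: one must appeal to the precise formulation of the problem (chapter 3 of \cite{FIKN2006}) to know that admissible solutions are bounded there — which is legitimate precisely because $G\to Id$ at $0$ — and this single fact feeds both removable-singularity steps above; the rest is routine complex analysis. As an alternative that avoids the determinant and ratio steps altogether, one may observe that $Y^{(1)}-Y^{(2)}$ satisfies the \emph{homogeneous} Riemann--Hilbert problem for $(\Gamma,G)$ (same jump on $\Gamma$, but tending to $0$ at $\infty$), so the argument already used to produce the existence corollary — positive definiteness of $N=G(\lambda)+\overline{G(\overline{\lambda})}^t$ combined with the integral identities — forces $Y^{(1)}-Y^{(2)}\equiv0$.
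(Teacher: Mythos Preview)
Your argument is essentially the paper's own proof --- form the ratio of two solutions, observe that its jump across $\Gamma$ is trivial, and apply Liouville's theorem with the normalization at $\infty$ --- only carried out with more care about invertibility and about the junction point $\lambda=0$, which the paper handles implicitly by deferring to the framework of \cite{FIKN2006}. One small correction: on $\Gamma_+$ you have $\lambda>0$, so in fact ${\rm Re}(\lambda^{-1}+\lambda)\to+\infty$ rather than $-\infty$; the off-diagonal entries of $G$ still decay there because on $\Gamma_+$ the exponent already carries a minus sign, so your conclusion $G\to Id$ at $0$ is unaffected.
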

	\begin{proof}
		Let \(\tilde{Y} \) be an another solution of Riemann-Hilbert problem with \\
		\(\lim_{\lambda \rightarrow \infty} \tilde{Y } = Id \) for \(G \) on \(\Gamma \). We put \(X = \tilde{Y}^{-1} Y \), then on \(\Gamma \) 
		\begin{equation}
			X_+ = \tilde{Y}_+^{-1} Y_+ = \tilde{Y}_-^{-1 } Y_- = X_-, \nonumber
		\end{equation}
		and \(X \) is holomorphic on \(\mathbb{C}^* \). Since \(X \rightarrow Id \ \ {\rm as}\ \lambda \rightarrow \infty \), by using the Liouville's theorem we obtain \(X = Id \) i.e. \(\tilde{Y} = Y \). \qed
	\end{proof}
	From the uniqueness of solution, we prove:
	\begin{lem}
		We have
		\begin{equation}
			Y(0) = \left(
			\begin{array}{cc}
				a & 0 \\
				0 & a^{-1}
			\end{array}
			\right),\ \ \ {\rm where }\ \ a \in \mathbb{R}^*. \nonumber
		\end{equation}
	\end{lem}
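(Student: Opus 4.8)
The plan is to pin down $Y(0)$ from the uniqueness of the solution of the Riemann--Hilbert problem (Lemma 5.3) together with two discrete symmetries of the jump matrix $G$. Write $J={\rm diag}(1,-1)$ and $f(\lambda)=\frac{r^2}{2}(\lambda^{-1}+\lambda)$, and abbreviate $\overline{M}^{-t}=(\overline{M}^{\,t})^{-1}$. A short direct computation from the explicit formulas for $G$ on $\Gamma_+$ and $\Gamma_-$ — using $f(-\lambda)=-f(\lambda)$, $f(\overline{\lambda}^{-1})=\overline{f(\lambda)}$ and $(G-Id)^2=0$ — gives
\begin{equation*}
  G(-\lambda)=J\,G(\lambda)^{-1}J,\qquad \overline{G(\overline{\lambda}^{-1})}^{\,t}=J\,G(\lambda)^{-1}J,
\end{equation*}
valid on $\Gamma_+\cup\Gamma_-$, in particular on the contour. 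One also checks that $G-Id\to 0$ exponentially both as $\lambda\to 0$ and as $\lambda\to\infty$ along the contour (the relevant exponent $\mp f$ tends to $-\infty$ at both ends); hence $Y$ extends holomorphically across $\lambda=0$, so that $Y(0):=\lim_{\lambda\to0}Y_\pm(\lambda)\in{\rm SL}_2\mathbb{C}$ is well defined and $\det Y(0)=1$.

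Diagonality would come from the first symmetry. Since $\lambda\mapsto-\lambda$ interchanges $H_+$ and $H_-$, put $\widetilde Y_+(\lambda)=J\,Y_-(-\lambda)J$ on $H_+$ and $\widetilde Y_-(\lambda)=J\,Y_+(-\lambda)J$ on $H_-$; these are holomorphic on $H_\pm$ and continuous up to the real axis, on the contour the jump relation $Y_+(-\lambda)=Y_-(-\lambda)G(-\lambda)$ together with $J\,G(-\lambda)J=G(\lambda)^{-1}$ gives $\widetilde Y_+=\widetilde Y_-\,G$, and $\widetilde Y_\pm\to Id$ at $\infty$ because $Y_\mp(-\lambda)\to Id$. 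So $\widetilde Y$ solves the same Riemann--Hilbert problem, and by Lemma 5.3 $\widetilde Y=Y$, i.e. $Y_+(\lambda)=J\,Y_-(-\lambda)J$. Letting $\lambda\to 0$ yields $Y(0)=J\,Y(0)J$, so $Y(0)$ is diagonal.

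Reality would come from the second symmetry. The map $\lambda\mapsto\overline{\lambda}^{-1}$ preserves $H_+$ and $H_-$, so $\widehat Y_\pm(\lambda):=J\,\overline{Y_\pm(\overline{\lambda}^{-1})}^{\,-t}J$ is holomorphic on $H_\pm$, and using $\overline{G(\lambda^{-1})}^{\,-t}=J\,G(\lambda)J$ on the contour one gets $\widehat Y_+=\widehat Y_-\,G$. Here $\widehat Y(0)=J\,\overline{Y(\infty)}^{\,-t}J=Id$ (because $\overline{\lambda}^{-1}\to\infty$ as $\lambda\to0$), while $\widehat Y_\pm\to D:=J\,\overline{Y(0)}^{\,-t}J$ as $\lambda\to\infty$ (because $\overline{\lambda}^{-1}\to0$ there), with $D$ a constant invertible matrix. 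Thus $D^{-1}\widehat Y$ solves the Riemann--Hilbert problem with normalization $Id$ at $\infty$, so $D^{-1}\widehat Y=Y$ by Lemma 5.3; evaluating at $\lambda=0$ gives $D^{-1}=Y(0)$, that is $Y(0)^{-1}=J\,\overline{Y(0)}^{\,-t}J$. Combined with the diagonality already shown, writing $Y(0)={\rm diag}(a,a^{-1})$ (with $a\ne 0$) this identity reads ${\rm diag}(a^{-1},a)={\rm diag}(\overline a^{-1},\overline a)$, hence $a\in\mathbb{R}^*$, which is the assertion.

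The two symmetry identities for $G$ and the verification that $\widetilde Y$, $\widehat Y$, $D^{-1}\widehat Y$ lie in the correct classes of functions are routine bookkeeping; the one point that genuinely needs care is the behaviour at the endpoints $\lambda=0,\infty$ — one must know that $Y$ extends holomorphically (equivalently, continuously) across $\lambda=0$, so that $Y(0)$ is meaningful and the limits used in the two applications of uniqueness are legitimate. This rests on the exponential decay of $G-Id$ at $\lambda=0$, a standard feature of Riemann--Hilbert problems whose jump is trivial to infinite order at an endpoint (cf. Chapter~3 of \cite{FIKN2006}); it is the same fact that makes the integrals in the proof of Lemma 5.2 and the argument of Corollary 5.5 converge.
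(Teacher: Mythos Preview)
Your proof is correct. The diagonality step, via the symmetry $G(-\lambda)=J\,G(\lambda)^{-1}J$ and uniqueness, is exactly what the paper does.

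For reality the paper takes a shorter route: it uses the simpler conjugation symmetry $\overline{G(\overline{\lambda})}=G(\lambda)^{-1}$ (the map $\lambda\mapsto\overline{\lambda}$ swaps $H_+$ and $H_-$), from which uniqueness gives $\overline{Y_\pm(\overline{\lambda})}=Y_\mp(\lambda)$ and hence $\overline{Y(0)}=Y(0)$ immediately, with no auxiliary constant $D$ to peel off. Your choice of the conjugate-inverse symmetry $\lambda\mapsto\overline{\lambda}^{-1}$ works too, but it forces you to renormalize by $D=J\,\overline{Y(0)}^{-t}J$ before invoking uniqueness. The upside of your route is that the identity $D^{-1}\widehat Y=Y$ you derive is precisely the relation the paper records separately as Lemma~5.5(II); so you have effectively merged that lemma into the present one. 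Either way the existence of $Y(0)$ rests on the exponential decay of $G-Id$ at $\lambda=0$, which both you and the paper treat as a standard Riemann--Hilbert fact.
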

	\begin{proof}
		From \(\lim_{\lambda \rightarrow 0} G = Id \), there exists \(Y(0) \).
		Since \(\overline{G(\overline{\lambda}^{-1} ) } = G(\lambda)^{-1} \), we have
		\begin{equation}
			\overline{Y(\overline{\lambda}) } = \overline{Y(\overline{\lambda}) G(\overline{\lambda} ) } = \overline{Y(-\overline{\lambda})} G(\lambda )^{-1}, \nonumber 
		\end{equation}
		and then from the uniqueness of \(Y \), we obtain \( \overline{Y_{\pm}(\overline{\lambda}) } = Y_{\mp}(\lambda) \). Thus,  we have \( \overline{Y(0)} = Y(0) \in {\rm SL}_2 \mathbb{R} \). Similarly, from
		\begin{equation}
			\left(
			\begin{array}{cc}
				1 & 0 \\
				0 & -1
			\end{array}
			\right) G(-\lambda ) \left(
			\begin{array}{cc}
				1 & 0 \\
				0 & -1
			\end{array}
			\right) = G(\lambda)^{-1}. \nonumber
		\end{equation}
		we have
		\begin{equation}
			\left( \begin{array}{cc}
				1 & 0 \\
				0 & -1
			\end{array} \right) Y(-\lambda) \left( \begin{array}{cc}
				1 & 0 \\
				0 & -1
			\end{array} \right) = Y(\lambda). \nonumber
		\end{equation}
		Hence, we obtain the stated result. \qed
	\end{proof}
	\begin{lem}
		We have
		\begin{enumerate}
			\item[(I)] \(Y(\lambda e^{i\pi } ) = \left( \begin{array}{cc}
				1 & 0 \\
				0 & -1
			\end{array} \right) Y(\lambda) \left( \begin{array}{cc}
				1 & 0 \\
				0 & -1
			\end{array} \right) \).
			\item[(II)] \(Y(\lambda) = \left(
			\begin{array}{cc}
				a & 0 \\
				0 & -a^{-1}
			\end{array}
			\right) \left(\overline{Y(\overline{\lambda}^{-1} )}^t \right)^{-1} \left(
			\begin{array}{cc}
				1 & 0 \\
				0 & -1
			\end{array}
			\right) \). \\
		\end{enumerate}
	\end{lem}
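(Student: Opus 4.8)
The plan is to deduce both identities from the uniqueness of the solution of the Riemann--Hilbert problem (i)--(iii) for $(\Gamma,G)$: in each case I would manufacture from $Y$ and a constant gauge an auxiliary matrix, show it again solves (i)--(iii) for the \emph{same} pair $(\Gamma,G)$, and invoke uniqueness. This is exactly the device already used in the proof of the preceding lemma, so the content is reduced to checking the jump relation and the behaviour at $\infty$ for the auxiliary matrices, together with a handful of elementary symmetries of $G$. Throughout write $J={\rm diag}(1,-1)$, and let $a\in\mathbb{R}^*$ be the constant with $Y(0)={\rm diag}(a,a^{-1})$ furnished by the preceding lemma.

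Identity (I) was in fact already obtained inside the proof that $Y(0)={\rm diag}(a,a^{-1})$: from $J\,G(-\lambda)\,J=G(\lambda)^{-1}$ on $\Gamma$ one got $J\,Y(-\lambda)\,J=Y(\lambda)$, and since $\lambda e^{i\pi}=-\lambda$ this is (I). A self-contained version: put $\Psi_+(\lambda)=J\,Y_-(-\lambda)\,J$ on $\overline{H_+}$ and $\Psi_-(\lambda)=J\,Y_+(-\lambda)\,J$ on $\overline{H_-}$ --- the subscripts are swapped because $\lambda\mapsto-\lambda$ interchanges $H_+$ and $H_-$. Then $\Psi_\pm$ is holomorphic in $H_\pm$, continuous up to the boundary, $\Psi_\pm(\lambda)\to J\,{\rm Id}\,J={\rm Id}$ as $\lambda\to\infty$ by (iii), and on $\Gamma$ one computes $\Psi_-^{-1}\Psi_+=J\,G(-\lambda)^{-1}\,J$, which equals $G(\lambda)$ since $G(-\lambda)=J\,G(\lambda)^{-1}\,J$ on $\Gamma$; this last identity is immediate from the block form of $G$, because replacing $\lambda$ by $-\lambda$ sends $\lambda^{-1}+\lambda$ to $-(\lambda^{-1}+\lambda)$ and hence interchanges the two pieces of $G$, while conjugation by $J$ transposes the nilpotent block. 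Uniqueness gives $\Psi=Y$, i.e. (I).

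For (II) the new ingredient is the adjoint symmetry $\overline{g(\overline{\lambda}^{-1})}^{\,t}=g(\lambda)$, which is immediate from $g=\overline{\phi(\overline{\lambda}^{-1})}^{\,t}J\,\phi(\lambda)$. I would set
\[
	\Phi_\pm(\lambda)=\begin{pmatrix} a & 0 \\ 0 & -a^{-1} \end{pmatrix}\bigl(\,\overline{Y_\pm(\overline{\lambda}^{-1})}^{\,t}\,\bigr)^{-1}\begin{pmatrix} 1 & 0 \\ 0 & -1 \end{pmatrix}\qquad\text{on }\overline{H_\pm}.
\]
Since $\lambda\mapsto\overline{\lambda}^{-1}$ maps each of $H_+,H_-$ to itself, $\Phi_\pm$ is built from $Y_\pm$; and since that map is anti-holomorphic, $\overline{Y_\pm(\overline{\lambda}^{-1})}^{\,t}$ --- hence $\Phi_\pm$ --- is holomorphic in $\lambda$, with continuity up to $\Gamma$ and up to $\lambda=0$ inherited from continuity of $Y_\pm$ on $\overline{H_\pm}$ and $Y_\pm(\infty)={\rm Id}$. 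As $\lambda\to\infty$ we have $\overline{\lambda}^{-1}\to 0$, so $Y_\pm(\overline{\lambda}^{-1})\to{\rm diag}(a,a^{-1})$ and, using $a\in\mathbb{R}^*$, $\Phi_\pm(\lambda)\to{\rm diag}(a,-a^{-1})\,{\rm diag}(a^{-1},a)\,J=J^2={\rm Id}$. Finally, writing $M^\sharp(\lambda):=\overline{M(\overline{\lambda}^{-1})}^{\,t}$ (an anti-homomorphism, $(AB)^\sharp=B^\sharp A^\sharp$), the jump $Y_+=Y_-G$ gives $Y_+^\sharp=G^\sharp Y_-^\sharp$ on $\Gamma$, whence $\Phi_-^{-1}\Phi_+=J\,(G^\sharp)^{-1}\,J$, so it remains to check $G^\sharp=J\,G^{-1}\,J$ on $\Gamma$. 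On $\Gamma$ one has $G(\overline{\lambda}^{-1})=G(\lambda^{-1})=G(\lambda)$ (because $\lambda^{-1}+\lambda$ is invariant under $\lambda\mapsto\lambda^{-1}$), $\overline{G(\lambda)}=G(\lambda)^{-1}$, and $G(\lambda)^t=J\,G(\lambda)\,J$, all read off the explicit block form; combining them, $G^\sharp=\overline{G(\lambda)}^{\,t}=(G(\lambda)^t)^{-1}=J\,G(\lambda)^{-1}\,J$. Hence $\Phi$ solves (i)--(iii) for $(\Gamma,G)$, and by uniqueness $\Phi=Y$, which is (II).

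The only computations are the three symmetries of $G$ just used --- each a one-line check on the form $G={\rm Id}\pm iE\,N$ with $N$ nilpotent and $N^t=JNJ$ --- and routine conjugate/transpose/inverse bookkeeping. The single point requiring care is that $\lambda\mapsto-\lambda$ swaps $H_+$ and $H_-$ whereas $\lambda\mapsto\overline{\lambda}^{-1}$ preserves each, which dictates whether $Y_+$ or $Y_-$ enters a given auxiliary factor; beyond that I expect no genuine obstacle, since the uniqueness of the Riemann--Hilbert solution does all the real work.
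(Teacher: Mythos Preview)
Your proposal is correct and follows essentially the same approach as the paper: both deduce (I) and (II) from the uniqueness of the Riemann--Hilbert solution together with the symmetries $G(\lambda e^{i\pi})=J\,G(\lambda)^{-1}J$ and $J\bigl(\overline{G(\overline{\lambda}^{-1})}^{\,t}\bigr)^{-1}J=G(\lambda)$. The paper's proof merely states these two identities and appeals to uniqueness, whereas you spell out the auxiliary matrices $\Psi$, $\Phi$, verify their jump and normalisation explicitly, and record the elementary checks on $G$; but the underlying argument is identical.
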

	\begin{proof}
		By Lemma 5.5, we obtain (I) from 
		\begin{equation}
			G(\lambda e^{i\pi } ) = \left( \begin{array}{cc}
				1 & 0 \\
				0 & -1
			\end{array} \right) G(\lambda)^{-1 } \left( \begin{array}{cc}
				1 & 0 \\
				0 & -1
			\end{array} \right). \nonumber
		\end{equation}
		and we obtain (II) from
		\begin{equation}
			\left(
			\begin{array}{cc}
				1 & 0 \\
				0 & -1
			\end{array}
			\right) \left( \overline{G(\overline{\lambda}^{-1} ) }^t \right)^{-1} \left(
			\begin{array}{cc}
				1 & 0 \\
				0 & -1
			\end{array}
			\right) = G(\lambda), \nonumber
		\end{equation}
		\qed
	\end{proof}
	
	Finally, we consider the problem (S5). From Proposition 5.1, 5.2, 5.4, Corollary 5.5 and Lemma 5.6, we obtain the global Iwasawa factorization of \(\phi \):
	\begin{thm}
		\begin{equation}
			\phi = \left(
			\begin{array}{cc}
				\sqrt{-\gamma} & -\lambda / \sqrt{-\gamma} \\
				0 & 1 / \sqrt{-\gamma}
			\end{array}
			\right) 
			\left(
			\begin{array}{cc}
				1 & 0 \\
				\lambda^{-1} \log{\frac{z}{2}} & 1
			\end{array}
			\right) \left(
			\begin{array}{cc}
				y_0 & \lambda z (y_0)_z \\
				\frac{1}{4} y_1 & y_0 + \lambda z \frac{1}{4} (y_1)_z
			\end{array}
			\right), \nonumber
		\end{equation}
		where \(\gamma \) is the Euler constant and
		\begin{equation}
			y_0(z) = \sum_{j \ge 0} \frac{\lambda^{-2j} z^{4j} }{16^j(j!)^2} ,\ \ y_1(z) = -2 \lambda^{-1} \sum_{j \ge 1} (1 + \cdots + \frac{1}{j} ) \frac{\lambda^{-2j} z^{4j} }{16^j (j!)^2}, \nonumber  
		\end{equation}
		admits an Iwasawa factorization
		\begin{equation*}
			\phi = F \left(
			\begin{array}{cc}
				0 & \lambda \\
				-\lambda^{-1} & 0
			\end{array}
			\right) B, \nonumber
		\end{equation*}
		on \(\mathbb{C} \backslash  (-\infty,0] \).
	\end{thm}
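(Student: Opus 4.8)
\emph{Proof plan.}

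The plan is to assemble the ingredients already established. Proposition 5.1 reduces the desired global Iwasawa factorization of $\phi$ to a global Birkhoff factorization of $g$; the homogeneity lemma (Lemma 5.1) reduces the latter to the case $z = r \in \mathbb{R}_{>0}$; and Proposition 5.3 reduces that in turn to exhibiting holomorphic maps $h_{\pm}:H_{\pm}\to{\rm SL}_2\mathbb{C}$, continuous up to $\overline{H}_{\pm}$, satisfying the four conditions (1)--(4). So the theorem comes down to manufacturing such $h_{\pm}$ out of the solution $Y = \{Y_{\pm}\}$ of the Riemann--Hilbert problem for $(\Gamma, G)$, whose existence was secured by the Vanishing Lemma together with the positive-definiteness of $N = G + \overline{G(\overline{\lambda})}^t$ (Corollary 5.5). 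All of this is available precisely because the parameter has been fixed to $a = \gamma$: that is the unique value making $A_m \in {\rm SU}_{1,1}$ and killing the obstruction term $2(a^2/\gamma^2 - 1)\gamma/\pi$ of Proposition 5.2, so that $g = \overline{k_m(\overline{\lambda}^{-1})}^t\,{\rm diag}(-1,1)\,k_m(\lambda)$ on $\Omega_m$.

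Concretely I would invert the substitution $Y_{\pm} = \epsilon\,{\rm diag}(\rho^{-1},\rho)\,(h_{\pm}^t)^{-1}$ used to pass from $h$ to $Y$: set $h_{\pm}(\lambda) = {\rm diag}(c^{-1},c)\,(Y_{\pm}(\lambda)^t)^{-1}$ with a real constant $c$, and let ${\rm diag}(a,a^{-1})$, $a\in\mathbb{R}^*$, be the value $Y(0)$ supplied by the lemma that computes it. Choosing $c = |a|^{-1/2}$ and $\epsilon = {\rm sgn}(a)$ (with $\rho$ then determined) makes the $\lambda\to\infty$ limit of $h_{\pm}$ match condition (2)$'$, equivalently (2) given (1), while the same lemma forces the $\lambda\to 0$ limit of $h_{\pm}$ to be the real diagonal matrix required by (3). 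Condition (1), $h_+k_0 = h_-k_1$ on $\Gamma_+\cup\Gamma_-$, is exactly the jump relation $Y_+ = Y_-G$, $G = (k_0k_1^{-1})^t$, transported through the substitution; the twisting condition (2) translates into the symmetry (I) of $Y$ (the one under $\lambda\mapsto\lambda e^{i\pi}$); and the reality condition (4) translates into the symmetry (II), the constants being forced to agree by the choices of $c$ and $\epsilon$. Each verification is a one-line matrix manipulation once the dictionary is written down, but the dependency order matters: the symmetries (I), (II) and the form of $Y(0)$ are themselves consequences of the uniqueness of the Riemann--Hilbert solution, so those must be invoked first.

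With $h_{\pm}$ in hand, Proposition 5.3 yields a Birkhoff factorization $g = B_-\,{\rm diag}(1,-1)\,B_+$ at $z = r$; Proposition 5.1 converts it into an Iwasawa factorization $\phi = FwB$ with $w = Id$ or $w = \left(\begin{smallmatrix}0 & \lambda\\ -\lambda^{-1} & 0\end{smallmatrix}\right)$; and the homogeneity lemma propagates this to every $z\in\mathbb{C} - (-\infty,0]$. To pin down $w$, observe that near $z = 0$ the factorization is already the one with $w = \left(\begin{smallmatrix}0 & \lambda\\ -\lambda^{-1} & 0\end{smallmatrix}\right)$ (the local Iwasawa factorization of Section 3), that the two possibilities $w = Id$ and $w\neq Id$ correspond to the two components of the domain of the Iwasawa multiplication map, so that $w$ is locally constant in $z$, and that $\mathbb{C} - (-\infty,0]$ is connected; hence $w = \left(\begin{smallmatrix}0 & \lambda\\ -\lambda^{-1} & 0\end{smallmatrix}\right)$ throughout. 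Finally, the matrix $\phi$ displayed in the theorem, with $\sqrt{-\gamma}$ in place of $\sqrt{-a}$, is literally the $a = \gamma$ specialization of the $\phi$ of Section 4, which is exactly the claim.

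The genuinely hard analysis has already been done --- the Bessel asymptotics of Section 4 feeding Proposition 4.3, and the positivity argument driving the Vanishing Lemma --- so what is left is organisational rather than deep. The point requiring care is the simultaneous bookkeeping of $\epsilon$, $c$ and $\rho$ so that all of (1)--(4) hold at once, together with checking that the Riemann--Hilbert solution $Y$ is regular enough (continuous up to $\Gamma$, with genuine limits $Y(0)$ and $Y(\infty) = Id$) to be admissible as input to Proposition 5.3. It is worth stressing that the Euler constant is not an artifact of the normalization: $a = \gamma$ is forced, being the only value under which the twisted reality structure of $g$ is compatible with that of an ${\rm SU}_{1,1}$ Iwasawa factor.
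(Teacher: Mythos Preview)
Your proposal is correct and follows essentially the same route as the paper: reduce to $z=r>0$ via Lemma~5.1, take the Riemann--Hilbert solution $Y$ from Corollary~5.5, set $h_{\pm}={\rm diag}(\sqrt{|a|},\sqrt{|a|}^{-1})(Y_{\pm}^t)^{-1}$ (your $c=|a|^{-1/2}$ is exactly this), verify (1)--(4) from the jump relation, the symmetries (I),(II) of Lemma~5.5, and the form of $Y(0)$ in Lemma~5.4, then apply Propositions~5.3 and~5.1 and fix $w$ by matching with the local factorization near $z=0$. Your connectedness argument for pinning down $w$ is a slightly more explicit version of the paper's appeal to Theorem~2.2 and the local result; otherwise the two proofs coincide.
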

	\begin{proof}
		From Lemma 5.1, we can assume \(z = r \in \mathbb{R}_{> 0 } \) and we fix \(r \). From Corollary 5.5, there exists a solution \(Y = \{Y_{\pm } \} \) of the Riemann-Hilbert problem (i), (ii), (iii). We put
		\begin{align}
			&h_+(\lambda) = \left(
			\begin{array}{cc}
				\sqrt{|a| } & 0 \\
				0 & \sqrt{|a| }^{-1}
			\end{array}
			\right) \left(Y_+(\lambda)^t \right)^{-1}, \nonumber \\
			&h_-(\lambda)  = \left(
			\begin{array}{cc}
				\sqrt{|a| } & 0 \\
				0 & \sqrt{|a| }^{-1}
			\end{array}
			\right) \left(Y_-(\lambda)^t \right)^{-1}, \nonumber
		\end{align}
		where \(Y(0) = {\rm diag}(a, a^{-1 }) \). We prove that \(h_{\pm } \) satisfy the conditions
		\begin{enumerate}
			\item[(1)] \(h_+ k_0 = h_- k_1 \) on \(\Gamma_+ \cup \Gamma_- \),
			\item[(2)]  \( h_{\pm }(\lambda e^{i\pi } ) = \left(
			\begin{array}{cc}
				1 & 0 \\
				0 & -1
			\end{array}
			\right) h_{\mp }(\lambda) \left(
			\begin{array}{cc}
				1 & 0 \\
				0 & -1
			\end{array}
			\right) \).
			\item[(3)] \(h_{\pm } \rightarrow \left(
			\begin{array}{cc}
				\rho & 0 \\
				0 & \rho^{-1}
			\end{array}
			\right) \) as \(\lambda \rightarrow 0 \) on \(\overline{H}_{\pm } \), where \(\rho \in \mathbb{R}^* \),
			\item [(4)] \( \overline{ h_{\pm }(\overline{\lambda}^{-1 } ) }^t \left(
			\begin{array}{cc}
				1 & 0 \\
				0 & -1
			\end{array}
			\right) h_{\pm }(\lambda) = \epsilon \left(
			\begin{array}{cc}
				1 & 0 \\
				0 & -1
			\end{array}
			\right), \ \ \ \epsilon \in \{-1,1 \} \).
		\end{enumerate}
		From (ii) and \(G = (k_0 k_1^{-1 } )^t \), we have
		\begin{align}
			h_+ k_0 &= \left( \begin{array}{cc}
				\sqrt{|a| } & 0 \\
				0 & \sqrt{|a| }^{-1}
			\end{array}
			\right) \left( Y_+^t \right)^{-1} k_0 \nonumber \\
			&= \left( \begin{array}{cc}
				\sqrt{|a| } & 0 \\
				0 & \sqrt{|a| }^{-1}
			\end{array}
			\right) \left( Y_-^t \right)^{-1} \left( G^t \right)^{-1} k_0 \nonumber \\
			&=\left( \begin{array}{cc}
				\sqrt{|a| } & 0 \\
				0 & \sqrt{|a| }^{-1}
			\end{array}
			\right) \left( Y_-^t \right)^{-1} k_1 = h_- k_1, \ \ \ {\rm on}\ \ \Gamma_+ \cup \Gamma_-. \nonumber
		\end{align}
		Thus, we obtain (1).

		From (I) of Lemma 5.6, we have
		\begin{align}
			h_{\pm }(\lambda e^{i\pi }) &= \left(
			\begin{array}{cc}
				\sqrt{|a| } & 0 \\
				0 & \sqrt{|a| }^{-1 }
			\end{array}
			\right) \left( Y_{\pm }(\lambda e^{i\pi } )^t \right)^{-1} \nonumber \\
			&= \left(
			\begin{array}{cc}
				\sqrt{|a| } & 0 \\
				0 & \sqrt{|a| }^{-1 }
			\end{array}
			\right) \left( \begin{array}{cc}
				1 & 0 \\
				0 & -1
			\end{array} \right) \left( Y_{\mp }(\lambda)^t \right)^{-1} \left( \begin{array}{cc}
				1 & 0 \\
				0 & -1
			\end{array} \right) \nonumber \\
			&= \left( \begin{array}{cc}
				1 & 0 \\
				0 & -1
			\end{array} \right) h_{\mp }(\lambda) \left( \begin{array}{cc}
				1 & 0 \\
				0 & -1
			\end{array} \right), \nonumber
		\end{align}
		then we obtain (2).
		
		From (iii) and (II) of Lemma 5.6, we have
		\begin{align}
			h_{\pm }(\lambda ) &= \left( \begin{array}{cc}
				\sqrt{|a| } & 0 \\
				0 & \sqrt{|a| }^{-1}
			\end{array}
			\right) \left( Y_{\pm }(\lambda )^t \right)^{-1} \nonumber \\
			&= \left( \begin{array}{cc}
				\sqrt{|a| } & 0 \\
				0 & \sqrt{|a| }^{-1}
			\end{array}
			\right)
			\left( \begin{array}{cc}
				a^{-1 } & 0 \\
				0 & -a
			\end{array} \right) \overline{Y_{\pm }(\overline{\lambda }^{-1 } ) } \left( \begin{array}{cc}
				1 & 0 \\
				0 & -1
			\end{array} \right) \nonumber \\
			&\rightarrow \frac{a}{|a|} \left( \begin{array}{cc}
				\sqrt{|a| }^{-1 } & 0 \\
				0 & \sqrt{|a| } 
			\end{array} \right) \ \ \ {\rm as}\ \ \lambda \rightarrow 0, \ \ (m = 0,1). \nonumber
		\end{align}
		Let \(\rho = \frac{a}{|a|} \sqrt{|a| }^{-1} \in \mathbb{R}^* \), then we obtain (3).
		
		From (II) of Lemma 5.6, we have
		\begin{align}
			\overline{h_{\pm }(\overline{\lambda}^{-1} ) }^t 
			&= \overline{Y_{\pm }(\overline{\lambda}^{-1} ) }^{-1} \left(
			\begin{array}{cc}
				\sqrt{|a| } & 0 \\
				0 & \sqrt{|a| }^{-1 }
			\end{array}
			\right) \nonumber \\
			&= \left(
			\begin{array}{cc}
				1 & 0 \\
				0 & -1
			\end{array}
			\right) Y_{\pm }(\lambda)^t \left(
			\begin{array}{cc}
				a^{-1} & 0 \\
				0 & -a
			\end{array}
			\right) \left(
			\begin{array}{cc}
				\sqrt{|a| } & 0 \\
				0 & \sqrt{|a| }^{-1 }
			\end{array}
			\right)  \nonumber \\
			&= \frac{a}{|a|} \left(
			\begin{array}{cc}
				1 & 0 \\
				0 & -1
			\end{array}
			\right) h_{\pm }(\lambda)^{-1} \left(
			\begin{array}{cc}
				1 & 0 \\
				0 & -1
			\end{array}
			\right). \nonumber
		\end{align}
		Let \(\epsilon = \frac{a}{|a|} \), then we obtain (4).
		
		Thus, from Proposition 5.1 and Proposition 5.4, \(\phi \) admits an Iwasawa factorization at \(z = r\). Since we choose an arbitrary \(z \), we obtain the global Iwasawa factorization of \(\phi  \) on \(\mathbb{C} \backslash  (-\infty,0] \).
		From Proposition 3.2 and the Iwasawa factorization near \(z = 0 \), we have
		\begin{equation}
			\phi = F \left(
			\begin{array}{cc}
				0 & \lambda \\
				-\lambda^{-1} & 0
			\end{array}
			\right) B. \nonumber
		\end{equation}
		Hence, we obtain the stated result. \qed
	\end{proof}
	Let
	\begin{align}
		&\left(
		\begin{array}{cc}
			\sqrt{-\gamma} & -\lambda / \sqrt{-\gamma} \\
			0 & 1 / \sqrt{-\gamma}
		\end{array}
		\right) 
		\left(
		\begin{array}{cc}
			1 & 0 \\
			\lambda^{-1} \log{\frac{z}{2}} & 1
		\end{array}
		\right) \left(
		\begin{array}{cc}
			y_0 & \lambda z (y_0)_z \\
			\frac{1}{4} y_1 & y_0 + \lambda z \frac{1}{4} (y_1)_z
		\end{array}
		\right) \nonumber \\
		&\hspace{5cm} =  F \left(
		\begin{array}{cc}
			0 & \lambda \\
			-\lambda^{-1} & 0
		\end{array}
		\right) B, \nonumber
	\end{align}
	be the Iwasawa factorization and
	\begin{equation}
		B|_{\lambda  = 0 } = \left( \begin{array}{cc}
			e^{\frac{v}{2} } & 0 \\
			0 & e^{-\frac{v}{2} }
		\end{array} \right). \nonumber 
	\end{equation}
	We put \(e^u = |z|^2 e^v \) and \(t = \frac{z^2}{2} \), then from Corollary 3.4 and Theorem 5.6, we obtain the global solution \(u : \mathbb{C}^* \rightarrow \mathbb{R} \) of the sinh-Gordon equation
	\begin{equation}
		u_{t \overline{t} } = e^{2u } - e^{-2u }, \nonumber
	\end{equation}
	with \(u \sim (1 + o(t)) \log{|t|} \) as \(t \rightarrow 0 \).
	
	\appendix
	\section{Proofs of section 3}
	\def\thesection{\Alph{section}}
	In this section, we give proofs of Propositions in section 3. We follow \cite{DGR2010}.
	\begin{prop}[Proposition 3.1]\label{prop3.1}
		Let
		\begin{equation}
			L = \left(
			\begin{array}{cc}
				1 & 0 \\
				\lambda^{-1} \log{\frac{z}{2}} & 1
			\end{array}
			\right)
			\left(
			\begin{array}{cc}
				y_0 & \lambda z (y_0)_z \\
				\frac{1}{4} y_1 & y_0 + \lambda z \frac{1}{4} (y_1)_z
			\end{array}
			\right), \nonumber
		\end{equation}
		where
		\begin{equation}
			y_0(z,\lambda) = \sum_{j \ge 0} \frac{\lambda^{-2j} z^{4j} }{16^j(j!)^2} ,\ \ y_1(z,\lambda) = -2 \lambda^{-1} \sum_{j \ge 1} (1 + \cdots + \frac{1}{j} ) \frac{\lambda^{-2j} z^{4j} }{16^j (j!)^2}. \nonumber  
		\end{equation}
		Then \(L \) is a solution of
		\begin{equation}
			L^{-1} dL = \frac{1}{\lambda} \left(
			\begin{array}{cc}
				0 & z^3 \\
				z^{-1} & 0
			\end{array}
			\right)dz, \nonumber
		\end{equation}
		on \(\mathbb{C}^* \). Here, we regard \(\log{z}\) as a multi-valued function on \(\mathbb{C}^*\).
	\end{prop}
	\begin{proof}
		Let \(f_0,  \lambda^{-1} f_0 \log{z} + f_1 \) be fundamental solutions of
		\begin{equation}
			f'' + z^{-1}f' - \lambda^{-2} z^{-1} f = 0, \nonumber
		\end{equation}
		where
		\begin{equation}
			f_0(z) = \sum_{i \ge 0} \frac{z^i}{(i!)^2 \lambda^{2i} },\ \ f_1(z) = -2 \lambda^{-1} \sum_{i \ge 1} (1 + \cdots + \frac{1}{i} ) \frac{z^i}{(i!)^2  \lambda^{2i}}. \nonumber  
		\end{equation}
		We put \(y_0(z) = f_0(\frac{z^4}{16} ) \), then \(y_0'(z) =  \frac{z^3}{4} f_0'(\frac{z^4}{16} ) \) and
		\begin{align}
			y_0''(z) &= \frac{z^6}{16} f_0''(\frac{z^4}{4} ) + \frac{3}{4} z^2 f_0'(\frac{z^4}{16} ) \nonumber \\
			&= \frac{z^6}{16} (- 16 z^{-4} f_0'(\frac{z^4}{16} ) + \lambda^{-2} 16 z^{-4} f(\frac{z^4}{16}) ) + \frac{3}{4} z^2 f_0'(\frac{z^4}{16} )  \nonumber \\
			&= - \frac{1}{4} z^2 f_0'(\frac{z^4}{16} ) + \lambda^{-2} z^2 f(\frac{z^4}{16} ) \nonumber \\
			&= - z^{-1} y_0(z) + \lambda^{-2} z^2 y_0(z). \nonumber 
		\end{align}
		Similarly, \(\lambda^{-1} f_0(\frac{z^4}{16} ) \log{\frac{z}{2} } + \frac{1}{4} f_1(\frac{z^4}{16}  ) \) satisfies \(f'' + z^{-1} f - \lambda^{-2} z^2 f = 0  \). \\
		Thus, 
		\begin{equation}
			L^{-1} dL = \frac{1}{\lambda} \left(
			\begin{array}{cc}
				0 & z^3 \\
				z^{-1} & 0
			\end{array}
			\right)dz. \nonumber
		\end{equation}
		\qed
	\end{proof}
	\begin{lem}
		Let
		\begin{equation}
			E = \left(
			\begin{array}{cc}
				\sqrt{-a} & -\lambda / \sqrt{-a} \\
				0 & 1 / \sqrt{-a}
			\end{array}
			\right) \left(
			\begin{array}{cc}
				1 & 0 \\
				\lambda^{-1} \log{\frac{z}{2}} & 1
			\end{array}
			\right) , \nonumber
		\end{equation}
		where \(a \in \mathbb{R}_{ > 0 } \).\\ Then for any \(e^{i\theta} \in S^1  \), there exists some \(\delta(\theta) \in {\rm SU}_{1,1} \) such that \(E(z e^{i\theta }, \lambda e^{i2\theta } ) = \delta(e^{i\theta}) T(\theta)^{-1} E(z,\lambda) T(\theta) \), where
		\begin{equation}
			T(\theta) = \left(
			\begin{array}{cc}
				e^{-i\theta } & 0 \\
				0 & e^{i\theta }
			\end{array}
			\right). \nonumber
		\end{equation}
	\end{lem}
	\begin{proof}
		Since
		\begin{align}
			(E^{-1} dE)(ze^{i\theta} , \lambda e^{i2\theta } ) &= \frac{1}{\lambda e^{i2\theta} } \left(
			\begin{array}{cc}
				0 & 0 \\
				1 & 0
			\end{array}
			\right) \frac{dz}{z} = \frac{1}{\lambda } \left(
			\begin{array}{cc}
				0 & 0 \\
				1 & 0
			\end{array}
			\right) \frac{dz}{z} \cdot T \nonumber \\
			&= (E^{-1} dE)(z,\lambda) \cdot T, \nonumber
		\end{align}
		there exists a \(\delta = \delta(\theta) \in {\rm SL}_2 \mathbb{C}  \) such that \(E(ze^{i\theta}, \lambda e^{i2\theta}) = \delta E(z,\lambda) T \). Then
		\begin{align}
			\delta &= E(ze^{i\theta}, \lambda e^{i2\theta} ) T^{-1} E(z,\lambda)^{-1} T \nonumber \\
			&= \left(
			\begin{array}{cc}
				\sqrt{-a} & -\lambda e^{i2\theta} / \sqrt{-a} \\
				0 & 1 / \sqrt{-a}
			\end{array}
			\right) \left(
			\begin{array}{cc}
				1 & 0 \\
				\lambda^{-1} e^{-i2\theta} \log{\frac{ze^{i\theta}}{2}} & 1
			\end{array}
			\right) T^{-1} \nonumber \\
			&\ \ \ \ \ \ \ \ \ \ \ \ \ \ \ \ \ \left(
			\begin{array}{cc}
				1 & 0 \\
				-\lambda^{-1} \log{z} & 1	
			\end{array}
			\right) \left(
			\begin{array}{cc}
				1 / \sqrt{-a} & \lambda / \sqrt{-a} \\
				0 & 
				\sqrt{-a}
			\end{array}
			\right) T. \nonumber 
		\end{align}
		and
		\begin{equation}
			\delta \left(
			\begin{array}{cc}
				1 & 0 \\
				0 & -1
			\end{array}
			\right) \delta^* = \left(
			\begin{array}{cc}
				1 & 0 \\
				0 & -1
			\end{array}
			\right). \nonumber
		\end{equation}
		Thus, we obtain \(\delta \in (\Lambda {\rm SU}_{1,1})_{\sigma} \). \qed
	\end{proof}
	\begin{lem}
		For any \(a \in \mathbb{R}_{>0} \), the Iwasawa factorization \(E = F_E w B_E \) near \(z = 0 \) is given by
		\begin{equation}
			w = \left(
			\begin{array}{cc}
				0 & \lambda \\
				-\lambda^{-1} & 0
			\end{array}
			\right), \ \ B_E = \left(
			\begin{array}{cc}
				\sqrt{-a - t - \overline{t}} & - \lambda / \sqrt{-a - t - \overline{t}} \\
				0 & 1 / \sqrt{-a - t - \overline{t}}
			\end{array}
			\right), \nonumber
		\end{equation}
		where \(t = \log{\frac{z}{2}} \).
	\end{lem}
	\begin{proof}
		Since
		\begin{align}
			F_E \left(
			\begin{array}{cc}
				1 & 0 \\
				0 & -1
			\end{array}
			\right) F_E^* &= E B_E^{-1} w^{-1} \left(
			\begin{array}{cc}
				1 & 0 \\
				0 & -1
			\end{array}
			\right) w B_E^{* -1} E^* \nonumber \\
			&= \left(
			\begin{array}{cc}
				1 & 0 \\
				0 & -1
			\end{array}
			\right), \nonumber
		\end{align}
		we obtain \(F_E \in (\Lambda {\rm SU}_{1,1,} )_{\sigma} \) and then \(E = F_E w B_E \).\qed
	\end{proof}
	\begin{prop}[Proposition 3.2]
		For any \(a \in \mathbb{R}_{>0} \) and \(y_0, y_1 \) as in Proposition 3.1,
		\begin{equation}
			\phi = \left(
			\begin{array}{cc}
				\sqrt{-a} & -\lambda / \sqrt{-a} \\
				0 & 1 / \sqrt{-a}
			\end{array}
			\right)
			\left(
			\begin{array}{cc}
				1 & 0 \\
				\lambda^{-1} \log{\frac{z}{2}} & 1
			\end{array}
			\right) \left(
			\begin{array}{cc}
				y_0 & \lambda z (y_0)_z \\
				\frac{1}{4} y_1 & y_0 + \lambda z \frac{1}{4} (y_1)_z
			\end{array}
			\right), \nonumber
		\end{equation}
		admits an Iwasawa factorization
		\begin{equation*}
			\phi = F \left(
			\begin{array}{cc}
				0 & \lambda \\
				-\lambda^{-1} & 0
			\end{array}
			\right) B,\ \ \ F \in (\Lambda {\rm SU}_{1,1})_{\sigma},\ B \in (\Lambda^{+,>0} {\rm SL}_2 \mathbb{C})_{\sigma}, \nonumber
		\end{equation*}
		on some domain \(U = V \cap \mathbb{C} \backslash  (-\infty,0] \), where \(V \) is a neighbourhood of \(z = 0 \) in \(\mathbb{C} \).
	\end{prop}
	\begin{proof}
		We put 
		\begin{equation}
			H = E^{-1} 
			\left( 	
			\begin{array}{cc}
				1 & 0 \\ 0 & -1 
			\end{array}
			\right) E^{-1*} = \left(
			\begin{array}{cc}
				0 & \lambda \\ \lambda^{-1} & -t-\overline{t} -%\gamma
				a
			\end{array}
			\right). \nonumber
		\end{equation}
		Since \(L_0(0) = Id \) and \(\lim_{z \rightarrow 0} z\log{|z|} = 0 \), \(\lim_{ z \rightarrow 0 } H^{-1} L_0^{-1} H L_0^{-1 *} = Id\).
		Hence we can split \(H^{-1} L_0^{-1} H L_0^{-1 *} = U_{+} U_{-} \) in a neighbourhood of \(z = 0 \) by Birkhoff factorization, where \(U_{+}(0) = U_{-}(0) =Id  \). 
		\begin{align}
			L_0^{-1} H L_0^{-1 *} &= H U_{+} U_{-} = E^{-1} 
			\left( 	
			\begin{array}{cc}
				1 & 0 \\ 0 & -1 
			\end{array}
			\right) E^{-1*} U_{+} U_{-} \nonumber\\
			&= B_E^{-1} \left(
			\begin{array}{cc}
				-1 & 0 \\ 0 & 1
			\end{array}
			\right) B_E^{-1 *} U_{+} U_{-}. \nonumber
		\end{align}
		Now \(B_E = B_E^{(1)} B_E^{(2)}\), where
		\begin{equation}
			B_E^{(1)} = 
			\left(
			\begin{array}{cc}
				\sqrt{-a - t - \overline{t} } & 0 \\ 0 & \frac{1}{\sqrt{-a - t  - \overline{t}} }
			\end{array}
			\right), \ \
			B_E^{(2)} = \left(
			\begin{array}{cc}
				1 & \frac{ \lambda }{-a - t - \overline{t}} \\ 0 & 1
			\end{array}
			\right). \nonumber
		\end{equation}
		Since \(\lim_{z \rightarrow 0} B_E^{(2)} U_{+}  = Id \), there exists a Birkhoff factorization of  \( (B_E^{(2)})^{ -1 *} U_+  \) in a neighbourhood of \(z = 0\). We also split \(B_E^{-1 *} U_+ = V_{+} V_{-} \) by Birkhoff factorization in a neighbourhood of \(z = 0 \) and hence  
		\begin{equation}
			L_0^{-1} H L_0^{-1 *} = B_E^{-1} \left(
			\begin{array}{cc}
				-1 & 0 \\ 0 & 1
			\end{array}
			\right) V_+ V_- U_-, \nonumber
		\end{equation}  
		near \(z = 0 \). We put
		\begin{equation}
			W_+ = B_E^{-1} \left(
			\begin{array}{cc}
				-1 & 0 \\ 0 & 1
			\end{array}
			\right)V_+,\ \ \ W_- = V_- U_-. \nonumber
		\end{equation}
		Since
		\begin{equation}
			W_-^* W_+^* = (L_0^{-1} H L_0^{-1 *} )^* = L_0^{-1} H L_0^{-1 *} = W_+ W_-, \nonumber
		\end{equation}
		\(W_-^* W_0^* = W_+ , \ W_0^{-1 *} W_+^*  = W_-  \), where 
		\begin{equation}
			W_+|_{\lambda = 0 } = W_0 = \left(
			\begin{array}{cc}
				-\rho^{-2} & 0 \\ 0 & \rho^2
			\end{array}
			\right), \nonumber
		\end{equation}
		\(\rho \in \mathbb{R}_{>0} \). We put \(F = \phi B^{-1} w^{-1} \), where
		\begin{equation}
			B = \left(
			\begin{array}{cc}
				-\rho^{-1} & 0 \\ 0 & \rho
			\end{array}
			\right) W_+^{-1}. \nonumber
		\end{equation}
		Then
		\begin{align}
			F \left(
			\begin{array}{cc}
				1 & 0 \\ 0 & -1
			\end{array}
			\right) F^* &= E L_0 B^{-1} w^{-1}
			\left(
			\begin{array}{cc}
				1 & 0 \\ 0 & -1
			\end{array}
			\right) 
			w B^{-1 *} L_0^* E^* \nonumber\\
			&= E L_0 W_+ W_- L_0^* E^* \nonumber\\
			&= E H E^* \nonumber\\
			&= \left(
			\begin{array}{cc}
				1 & 0 \\ 0 & -1
			\end{array}
			\right). \nonumber
		\end{align}
		Hence \(F \in ( \Lambda {\rm SU}_{1,1})_{\sigma},\ B \in (\Lambda^{+,>0} {\rm SL}_2 \mathbb{C} )_{\sigma} \) and we can split \(\phi = F w B  \) near \(z=0 \) via Iwasawa factorization.\qed
	\end{proof}
	\begin{prop}[Proposition 3.3]
		For any \(e^{i\theta } \in S^1 \),
		\begin{equation}
			\phi(z e^{i\theta}, \lambda e^{i2\theta} ) = \delta T^{-1} \phi(z,\lambda) T, \nonumber
		\end{equation}
		where \(\delta = \delta(\theta) \in (\Lambda {\rm SU}_{1,1 } )_{\sigma } \) and
		\begin{equation}
			T = T(\theta) = \left( \begin{array}{cc}
				e^{-i\theta } & 0 \\
				0 & e^{i\theta }
			\end{array} \right) \in U(1). \nonumber
		\end{equation} 
	\end{prop}
	\begin{proof}
		\begin{equation}
			L_0^{-1} dL_0 + \frac{1}{\lambda} L_0^{-1} N L_0 \frac{dz}{z} = L_0^{-1} dL_0 = \eta, \nonumber
		\end{equation}
		with the initial condition \(L_0(0) = Id \).  Since \(T L_0(ze^{i\theta },\lambda e^{i 2\theta }) T^{-1} \) satisfies the above equation, \( L_0(ze^{ i\theta },\lambda e^{i 2\theta }) = T^{-1} L_0(z,\lambda) T  \). Thus
		\begin{align}
			\phi(ze^{i \theta},\lambda e^{i 2\theta}) &=
			E(z e^{ i\theta},\lambda e^{i 2\theta}) L_0(z e^{ i\theta},\lambda e^{i 2\theta})
			= \delta T^{-1} E(z,\lambda) T T^{-1} L_0 T \nonumber \\
			&= \delta T^{-1} \phi(z,\lambda) T. \nonumber
		\end{align}
		\qed
	\end{proof}
	\begin{cor}[Corollary 3.4]\label{cor3.4}
		Let
		\begin{equation}
			\phi = F \left(
			\begin{array}{cc}
				0 & \lambda \\
				-\lambda^{-1} & 0
			\end{array}
			\right) B, \nonumber
		\end{equation}
		be an Iwasawa factorization on some domain \(U = V \cap \mathbb{C} \backslash  (-\infty,0] \), where \(V \) is a neighbourhood of \(z = 0 \) in \(\mathbb{C} \). We put
		\begin{equation}
			B(z,\lambda = 0) = \left(
			\begin{array}{cc}
				e^{\frac{v}{2} } & 0 \\
				0 & e^{- \frac{v}{2} }
			\end{array}
			\right), \nonumber
		\end{equation}
		then \(v(z, \overline{z}) = v(|z|) \) and \(v:U \rightarrow \mathbb{R} \) satisfies
		\begin{equation}
			\frac{1}{4} \left(v_{rr} + \frac{1}{r} v_r  \right) - r^6 e^{2v} + r^{-2} e^{-2v} = 0, \nonumber
		\end{equation}
		and \(e^{\frac{v}{2}} \sim \sqrt{-a -2 \log{r} } \) as \(r \rightarrow 0\), where \(r = |z| \).
	\end{cor}
	\begin{proof}
		We put
		\begin{equation}
			\tilde{F} = F \left(
			\begin{array}{cc}
				0 & \lambda \\
				-\lambda^{-1} & 0
			\end{array}
			\right). \nonumber
		\end{equation}
		Since
		\begin{align}
			\tilde{F}^{-1}d\tilde{F} &= B \xi B^{-1} - dB B^{-1} \nonumber \\
			&= \left(\begin{array}{cc}
				\frac{1}{2}v_z & \lambda^{-1}z^3e^v \\
				\lambda^{-1}z^{-1}e^{-v} & -\frac{1}{2}v_z
			\end{array}\right)dz + \left(\begin{array}{cc}
				-\frac{1}{2}v_{\overline{z}} & \lambda \overline{z}^{-1}e^{-v} \\
				\lambda \overline{z}^3e^{v} & \frac{1}{2}v_{\overline{z}}
			\end{array}\right)d\overline{z}, \nonumber
		\end{align}
		and \(\tilde{F}^{-1}d\tilde{F}\) satisfies the zero curvature equation, we obtain
		\begin{equation}
			\frac{1}{4} \left(v_{rr} + \frac{1}{r} v_r  \right) - r^6 e^{2v} + r^{-2} e^{-2v} = 0. \nonumber
		\end{equation}
		From the proof of Proposition A.2, we obtain \(e^{\frac{v}{2}} \sim \sqrt{-a -2 \log{r} } \) as \(r \rightarrow 0\).
		\qed
	\end{proof}
	\section{Bessel functions}
	\def\thesection{\Alph{section}}
	In this section, we review the definition of the Bessel functions and their asymptotic expansions. We refer to section 17 of \cite{WW1996}. For any real number \(\alpha \), the Bessel equation is
	\begin{equation}
		x^2 \frac{d^2 y}{d x^2} + x \frac{d y}{dx} + \left(x^2 - \alpha^2 \right) y = 0,\ \ \ x \in \mathbb{C}. \nonumber 
	\end{equation}
	The Bessel functions are canonical solutions \(y(x) \) of the Bessel equation and we call \(\alpha \) the order of the Bessel function. The Bessel functions are defined as follows.
	\begin{Def}
		For \(\alpha \notin \mathbb{Z} \), \(x \in \mathbb{C} \)
		\begin{align}
			&J_{\alpha }(x) = \sum_{j = 0 }^{\infty }\frac{(-1)^j }{j! (j + \alpha + 1) } \left(\frac{x}{2} \right)^{2j+\alpha } \ : \ {\rm Bessel\ function\ of\ the\ first\ kind }, \nonumber \\
			&Y_{\alpha}(x) = \frac{J_{\alpha}(x) \cos{(\alpha \pi )} - J_{-\alpha}(x) }{\sin{(\alpha \pi)}}\ : \ \begin{gathered}
				{\rm Bessel\ function\ of\ the\ second\ kind} \\
				\ \ \ ({\rm Neumann\ function})
			\end{gathered}, \nonumber \\
			&H_{\alpha}^{(1)}(x) = J_{\alpha }(x) + i Y_{\alpha}(x) \ : \ {\rm Hankel\ function\ of\ the\ first\ kind}, \nonumber \\
			&H_{\alpha}^{(2)}(x) = J_{\alpha }(x) - i Y_{\alpha}(x) \ : \ {\rm Hankel\ function\ of\ the\ second\ kind}, \nonumber
		\end{align} \vspace{2mm}
		For \(n \in \mathbb{Z}, x \in \mathbb{C} \), we define the Bessel functions by
		\begin{align}
			&J_n(x) = \lim_{\alpha \rightarrow n} J_{\alpha }(x),\ \ \ Y_n(x) = \lim_{\alpha \rightarrow n} Y_{\alpha }(x), \nonumber \\
			&H_n^{(1)}(x) = \lim_{\alpha \rightarrow n} H_{\alpha }^{(1)}(x),\ \ \ H_n^{(2)}(x) = \lim_{\alpha \rightarrow n} H_{\alpha }^{(2)}(x). \nonumber
		\end{align}
		We remark that \(Y_n, H_n^{(1)}, H_2^{(2)} \) are multi-valued functions on \(\mathbb{C} \).
	\end{Def} \vspace{2mm}
	
	In the Bessel equation, we replace \(x \) by \( ix\) then we obtain the modified Bessel equation
	\begin{equation}
		x^2 \frac{d^2 x}{d x^2} + x \frac{d y}{dx} - \left(x^2 + \alpha^2 \right) y = 0,\ \ \ x \in \mathbb{C}. \nonumber
	\end{equation}
	The canonical solutions of the modified Bessel equation are given as follows.
	\begin{Def}\label{defA2}
		For \(\alpha \notin \mathbb{Z}, x \in \mathbb{C} \), the modified Bessel function are defined by
		\begin{align}
			&I_{\alpha }(x) = \sum_{j = 0 }^{\infty }\frac{1 }{j! (j + \alpha + 1) } \left(\frac{x}{2} \right)^{2j+\alpha } \ : \ 
			\begin{gathered}
				{\rm modified\ Bessel\ function} \\
				\ \ \ \ \ \ \ \ \ \ \ \ \ \ \ \ \ \ {\rm of\ the\ first\ kind}
			\end{gathered}, \nonumber \\
			&K_{\alpha }(x) = \frac{\pi}{2} \frac{I_{-\alpha}(x) - I_{\alpha}(x) }{\sin{(\alpha \pi )} }\ : \ {\rm modified\ Bessel\ function\ of\ second\ kind}, \nonumber
		\end{align}
		and for \(n \in \mathbb{Z} \) we define
		\begin{equation}
			I_n(x) = \lim_{\alpha \rightarrow n} I_{\alpha }(x),\ \ \ K_n(x) = \lim_{\alpha \rightarrow n} K_{\alpha }(x). \nonumber
		\end{equation}
	\end{Def}
	From the definition, we have
	\begin{equation}
		I_{\alpha }(x) = e^{-i \frac{\alpha \pi}{2} } J_{\alpha }(ix),\ \ \ K_{\alpha}(x) = K_{-\alpha}(x) = i\frac{\pi}{2} e^{i \frac{\alpha \pi}{2} } H_{\alpha}^{(1)}(ix). \nonumber
	\end{equation}
	Furthermore, we obtain the following Proposition.
	\begin{prop}\label{propA1}
		For \(n \in \mathbb{Z}_{\ge 0 }, x \in \mathbb{C} \), the expansion series of \(Y_n, K_n \) at \(x = 0 \) are given by
		\begin{align}
			&Y_0(x) = \frac{2}{\pi} J_0(x) \log{\left(\frac{x}{2} \right)} - \frac{2}{\pi} \sum_{j=0}^{\infty} \frac{\psi(j+1)}{(j!)^2} (-1)^j \left( \frac{x}{2} \right)^{2j }, \nonumber \\
			&Y_n(x) = \frac{2}{\pi} J_n(x) \log{\left(\frac{x}{2} \right)} - \frac{1}{\pi} \sum_{j=0}^{n-1} \frac{(n-j-1)!}{j!} \left( \frac{x}{2} \right)^{-n+2j } \nonumber \\
			&\hspace{3.7cm} - \frac{1}{\pi} \sum_{i=0}^{\infty} \frac{\psi(j+1) + \psi(n+j+1)}{j!(n+j)!}(-1)^j \left( \frac{x}{2} \right)^{n+2j }, \nonumber \\
			&K_0(x) = -I_0(x) \log{\left(\frac{x}{2} \right)} + \sum_{j=0}^{\infty} \frac{\psi(j+1)}{(j!)^2} \left(\frac{x}{2} \right)^{2j}, \nonumber \\
			&K_n(x) = \frac{1}{2} \sum_{j=0}^{n-1} \frac{(-1)^j \psi(n-j-1)!}{j!} \left(\frac{x}{2} \right)^{2j-n} \nonumber \\
			&\hspace{1.2cm} + \sum_{j=0}^{\infty} \frac{(-1)^{n+1} }{j!(n+1)!} \left(\frac{x}{2} \right)^{n +2j } \left\{\log{\left(\frac{x}{2} \right)} - \frac{1}{2} \psi(j+1) - \frac{1}{2} \psi(n+j+1) \right\}, \nonumber
		\end{align}
		where \(\psi(1) = - \gamma \) and for \(j \ge 1 \)
		\begin{equation}
			\psi(j+1) = - \gamma + \left( 1 + \frac{1}{2} + \cdots + \frac{1}{j} \right). \nonumber
		\end{equation}
	\end{prop}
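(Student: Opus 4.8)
The plan is to derive both expansions by the same device: since $J_{-n}=(-1)^n J_n$ and $I_{-n}=I_n$, the quotients
\[
Y_\alpha=\frac{J_\alpha\cos(\alpha\pi)-J_{-\alpha}}{\sin(\alpha\pi)},\qquad
K_\alpha=\frac{\pi}{2}\,\frac{I_{-\alpha}-I_\alpha}{\sin(\alpha\pi)}
\]
are of the form $0/0$ at $\alpha=n\in\mathbb{Z}_{\ge 0}$, so one applies L'Hôpital's rule in the order $\alpha$. The power series defining $J_\alpha$ and $I_\alpha$ converge locally uniformly in $(x,\alpha)$ and depend holomorphically on $\alpha$, so differentiation in $\alpha$ may be carried out term by term; writing $\dot J_\nu=\partial_\nu J_\nu(x)$ and $\dot I_\nu=\partial_\nu I_\nu(x)$, the rule yields
\[
Y_n=\frac{1}{\pi}\left(\dot J_n+(-1)^n\dot J_{-n}\right),\qquad
K_n=\frac{(-1)^{n+1}}{2}\left(\dot I_n+\dot I_{-n}\right).
\]

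Next I would compute the order-derivatives from the series. Using $\partial_\nu\left[1/\Gamma(j+\nu+1)\right]=-\psi(j+\nu+1)/\Gamma(j+\nu+1)$ and $\partial_\nu(x/2)^{2j+\nu}=(x/2)^{2j+\nu}\log(x/2)$ one obtains
\[
\dot J_\nu(x)=J_\nu(x)\log\frac{x}{2}-\sum_{j\ge 0}\frac{(-1)^j\,\psi(j+\nu+1)}{j!\,\Gamma(j+\nu+1)}\left(\frac{x}{2}\right)^{2j+\nu},
\]
and the same identity for $\dot I_\nu$ with the factor $(-1)^j$ deleted. Evaluation at $\nu=n$ is immediate. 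At $\nu=-n$ with $n\ge 1$ the factors $1/\Gamma(j-n+1)$ vanish for $j=0,\dots,n-1$, so in those terms one takes the finite limit of $\psi(j-n+1)/\Gamma(j-n+1)$; from $\Gamma(z)^{-1}=(-1)^m m!(z+m)+O((z+m)^2)$ and $\psi(z)=-(z+m)^{-1}+O(1)$ near $z=-m$ this limit equals $(-1)^{n-j}(n-1-j)!$. Collecting these finitely many terms (together with the prefactor $(-1)^n$ in the case of $Y_n$) reproduces exactly the principal part $-\frac{1}{\pi}\sum_{j=0}^{n-1}\frac{(n-j-1)!}{j!}(x/2)^{-n+2j}$; the terms with $j\ge n$ reassemble after the shift $j\mapsto n+j$ (using $J_{-n}=(-1)^n J_n$) into the stated infinite series $-\frac{1}{\pi}\sum_{j\ge 0}\frac{(-1)^j(\psi(j+1)+\psi(n+j+1))}{j!(n+j)!}(x/2)^{n+2j}$; and the logarithmic contributions add up to $\frac{2}{\pi}J_n(x)\log(x/2)$. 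The computation for $K_n$ is identical up to the overall factor $(-1)^{n+1}/2$ and the absence of alternation, and the case $n=0$ is simply $Y_0=\frac{2}{\pi}\dot J_0$, $K_0=-\dot I_0$, with empty principal part.

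The hard part will be the bookkeeping near the poles of the Gamma function: one must carry the combinatorial constant $(-1)^{n-j}(n-1-j)!$ and its sign correctly through the prefactors $(-1)^n$ and $1/\pi$ (respectively $(-1)^{n+1}/2$), and check that the two contributions to the $(x/2)^{n+2j}$ coefficient — the one from $\dot J_n$ carrying $\psi(n+j+1)$ and the one from $\dot J_{-n}$ carrying $\psi(j+1)$ — combine under a single overall factor $-1/\pi$. Everything else is routine term-by-term differentiation and reindexing of absolutely convergent series. One could instead deduce the $K_n$ formula from the $Y_n$ formula via the relation $K_\alpha(x)=\frac{\pi i}{2}e^{i\alpha\pi/2}H^{(1)}_\alpha(ix)$ recorded above, but the direct route keeps the appendix self-contained.
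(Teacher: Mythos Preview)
Your derivation is the standard one and is correct: apply L'H\^opital in the order $\alpha$ to the defining quotients, differentiate the power series termwise using $\partial_\nu\Gamma(j+\nu+1)^{-1}=-\psi(j+\nu+1)/\Gamma(j+\nu+1)$, and extract the finite principal part from the limits $\lim_{\nu\to -n}\psi(j+\nu+1)/\Gamma(j+\nu+1)=(-1)^{n-j}(n-1-j)!$ for $0\le j\le n-1$. The bookkeeping you flag as ``the hard part'' is genuinely the only delicate point, and you have the signs right.

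There is nothing to compare against, however: in the paper this proposition sits in an appendix whose stated purpose is to \emph{review} the definitions and asymptotics of Bessel functions, and it is recorded there without proof as a classical fact (the asymptotic expansions that follow it are likewise quoted, with a reference to Whittaker--Watson). So your write-up supplies a proof where the paper gives none; if anything, it is more than what the appendix intends.
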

	\begin{rem}
		The function \(\psi \) is called the digamma function. For \(x \in \mathbb{C} \), the digamma function is defined by
		\begin{equation}
			\psi(x) = \frac{d}{dx} \log{\Gamma(x)} = \frac{(\Gamma(x))_x}{\Gamma(x)}, \nonumber
		\end{equation}
		where \(\Gamma \) is the gamma function.
	\end{rem}
	Next, we consider the asymptotic expansions of the Bessel functions.
	\begin{prop}
		Fix \(a, n \in \mathbb{Z}_{\ge 0} \). For \(-\pi < {\rm arg}(x) < 2\pi \), we have
		\begin{equation}
			H_{\alpha}^{(1)}(x) = \sqrt{\frac{2 }{\pi x } } e^{i(x - \frac{\pi}{4} - \frac{\alpha \pi}{2})} \left\{ \sum_{j=0}^{n-1} \frac{\Gamma(\alpha + j + 1/2)}{j!\Gamma(\alpha -j + 1/2)} \left(\frac{i}{2x} \right)^j + R_{1,n}^{\alpha}(x) \right\}, \nonumber
		\end{equation}
		\begin{equation}
			H_{\alpha}^{(2)}(x) = \sqrt{\frac{2 }{\pi x } } e^{-i(x - \frac{\pi}{4} - \frac{\alpha \pi}{2})} \left\{ \sum_{j=0}^{n-1} \frac{\Gamma(\alpha + j + 1/2)}{j!\Gamma(\alpha -j + 1/2)} \left(\frac{-i}{2x} \right)^j + R_{2,n}^{\alpha}(x) \right\}, \nonumber
		\end{equation}
		where
		\begin{equation}
			|R_{1,n}^{\alpha}(x)| < C_1 |x|^{-n},\ \ \ |R_{2,n}^{\alpha}(x)| < C_2 |x|^{-n},\ \ \ C_1,\ C_2:{\rm constant}. \nonumber
		\end{equation}
	\end{prop}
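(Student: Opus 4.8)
The plan is to obtain both expansions from a classical integral representation of Poisson type for the Hankel functions, combined with Taylor's theorem with explicit remainder (for general real $\alpha$ a similar argument applies, using a slightly modified representation when $\alpha \le -1/2$; in the case $\alpha = 0$ needed elsewhere in the paper the representation below applies directly). First I would record
\[
H_\alpha^{(1)}(x) = \sqrt{\frac{2}{\pi x}}\, \frac{e^{i(x - \alpha\pi/2 - \pi/4)}}{\Gamma(\alpha + 1/2)} \int_0^\infty e^{-t}\, t^{\alpha - 1/2} \left(1 + \frac{it}{2x}\right)^{\alpha - 1/2} dt,
\]
valid initially for $x$ in a neighbourhood of the positive real axis (this is standard; see \cite{WW1996}), together with the companion representation for $H_\alpha^{(2)}$ in which $(1 + it/(2x))^{\alpha - 1/2}$ is replaced by $(1 - it/(2x))^{\alpha - 1/2}$ and $e^{i(x - \alpha\pi/2 - \pi/4)}$ by $e^{-i(x - \alpha\pi/2 - \pi/4)}$.

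Next I would expand the analytic function $w \mapsto (1+w)^{\alpha - 1/2}$ about $w = 0$ by Taylor's theorem,
\[
(1+w)^{\alpha - 1/2} = \sum_{j=0}^{n-1} \binom{\alpha - 1/2}{j}\, w^j + r_n(w), \qquad \binom{\alpha - 1/2}{j} = \frac{\Gamma(\alpha + 1/2)}{j!\,\Gamma(\alpha - j + 1/2)},
\]
with $r_n$ the integral form of the remainder, substitute $w = it/(2x)$ into the integrand, and integrate term by term. Using $\int_0^\infty e^{-t}\, t^{\alpha + j - 1/2}\, dt = \Gamma(\alpha + j + 1/2)$, the finite sum reproduces, after dividing by $\Gamma(\alpha + 1/2)$, exactly $\sum_{j=0}^{n-1} \frac{\Gamma(\alpha + j + 1/2)}{j!\,\Gamma(\alpha - j + 1/2)} \left(\frac{i}{2x}\right)^j$, and the leftover is $R_{1,n}^\alpha(x) = \Gamma(\alpha + 1/2)^{-1} \int_0^\infty e^{-t}\, t^{\alpha - 1/2}\, r_n\!\left(\frac{it}{2x}\right) dt$. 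The estimate $|R_{1,n}^\alpha(x)| < C_1 |x|^{-n}$ then reduces to the bound $|r_n(w)| \le C\,|w|^n$, which holds provided $w$ stays a fixed distance from the branch cut of $(1+w)^{\alpha - 1/2}$, i.e. provided $1 + it/(2x)$ stays away from $(-\infty,0]$; this is exactly where the hypothesis $-\pi < \arg(x) < 2\pi$ enters. Substituting $w = it/(2x)$ gives $|r_n(it/(2x))| \le C\,(t/(2|x|))^n$, and since $\int_0^\infty e^{-t}\, t^{\alpha + n - 1/2}\, dt < \infty$ the claimed bound follows with $C_1$ depending only on $n$ and $\alpha$. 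To cover the full sector $-\pi < \arg(x) < 2\pi$ rather than just a neighbourhood of $\arg(x)=0$, I would rotate the contour in the $t$-plane to $e^{i\psi}[0,\infty)$ for a suitable $\psi = \psi(\arg x)$ — legitimate by Cauchy's theorem, since the integrand is holomorphic and decays in the region swept — and then fix the identity on the whole sector by analytic continuation in $x$. The expansion of $H_\alpha^{(2)}$, with its alternating coefficients $\left(\frac{-i}{2x}\right)^j$, drops out of the same computation applied to the companion representation; alternatively, on the overlap of sectors it follows from $H_\alpha^{(2)}(x) = \overline{H_\alpha^{(1)}(\bar x)}$ for real $\alpha$.

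The main obstacle is not the term-by-term integration, which is routine manipulation of Gamma integrals, but obtaining the remainder bound \emph{uniformly} as $\arg(x)$ ranges over the whole interval $(-\pi, 2\pi)$: near its endpoints the point $1 + it/(2x)$ can drift toward the cut of $w \mapsto (1+w)^{\alpha - 1/2}$ once $t$ is large, so one must either pick the rotation angle $\psi$ so as to keep a definite distance from the cut, or split $\int_0^\infty$ as $\int_0^{2|x|} + \int_{2|x|}^\infty$ and bound the tail directly (for instance estimating $(1 + it/(2x))^{\alpha - 1/2}$ by a fixed power of $t/|x|$ there). Producing a single constant $C_1 = C_1(n,\alpha)$ valid on the entire sector is the one genuinely delicate point; everything else can be quoted from \cite{WW1996}.
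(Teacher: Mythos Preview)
The paper does not give a proof of this proposition: it is stated in the appendix as a classical background fact, and the companion Proposition on $J_\alpha$, $Y_\alpha$ is simply attributed to \cite{WW1996}. So there is no argument to compare against; you have supplied the proof the paper omits, and your route via the Poisson--Hankel integral representation, termwise integration against $e^{-t}t^{\alpha+j-1/2}$, and contour rotation to reach the full sector is exactly the classical derivation in Whittaker--Watson.

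Two small remarks. First, the standard sector of validity for $H_\alpha^{(2)}$ is $-2\pi < \arg(x) < \pi$, not $-\pi < \arg(x) < 2\pi$; the paper's statement places both Hankel functions in the same sector, which is at best an imprecision, and your appeal to $H_\alpha^{(2)}(x) = \overline{H_\alpha^{(1)}(\bar x)}$ would naturally produce the reflected sector rather than the one stated. Second, your identification of the genuinely delicate point --- uniformity of the remainder bound as $\arg(x)$ approaches the endpoints of the sector --- is correct, and the contour-rotation fix you propose is the right one; note, though, that the resulting constant $C_1$ will in general depend on how close $\arg(x)$ is allowed to get to the boundary (the estimate is uniform on any closed subsector but not on the full open sector), which is consistent with how such results are usually stated and with how the paper uses the expansion.
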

	\begin{prop}[\cite{WW1996}]
		Fix \(\alpha, n \in \mathbb{Z}_{\ge 0} \). For \(-\pi < {\rm arg}(x) < \pi \), we have
		\begin{equation}
			J_{\alpha}(x) = \frac{1}{\sqrt{2\pi x}}\left\{e^{i\left(\alpha+\frac{1}{2}\right)\frac{\pi}{2}}W_{0,\alpha}(2ix) + e^{-i\left(\alpha+\frac{1}{2}\right)\frac{\pi}{2}}W_{0,\alpha}(-2ix) \right\}, \nonumber
		\end{equation}
		\begin{equation}
			Y_{\alpha}(x) = \cos{(\alpha \pi)} \frac{e^{-i\alpha \pi}}{\sqrt{2\pi x}}\left\{e^{i\left(\alpha+\frac{3}{2}\right)\frac{\pi}{2}}W_{0,\alpha}(2ix) + e^{-i\left(\alpha+\frac{3}{2}\right)\frac{\pi}{2}}W_{0,\alpha}(-2ix) \right\}, \nonumber
		\end{equation}
		where \(W_{0,\alpha}\) is a solution of
		\begin{equation}
			\frac{d^2W}{dx^2} + \left\{-\frac{1}{4} + \frac{1}{z^2}\left(\frac{1}{4} - \alpha^2\right) \right\}, \nonumber
		\end{equation}
		on \(\mathbb{C}^*\) with the asymptoics expansion
		\begin{equation}
			W_{0,\alpha}(x) \sim e^{-\frac{x}{2}}\left\{1 + \sum_{l=1}^{\infty}\frac{\left\{\alpha^2  - \frac{1}{4}\right\}\left\{\alpha^2 - \frac{9}{4}\right\} \cdots \left\{\alpha^2 - \left(l - \frac{1}{2}\right)^2 \right\}}{l!x^l} \right\}, \nonumber
		\end{equation}
		for large \(|x|\), \(-\pi < {\rm arg}(x) < \pi\). Equivalently, we have
		\begin{equation}
			J_{\alpha}(x) = \sqrt{\frac{2}{\pi x}}\left\{(1+T_1^{\alpha}(x))\cos{\left(x-\frac{\alpha \pi}{2}-\frac{\pi}{4}\right) - T_2^{\alpha}(x)\sin{\left(x-\frac{\alpha \pi}{2}-\frac{\pi}{4} \right)}}
			\right\}, \nonumber
		\end{equation}
		\footnotesize
		\begin{equation}
			Y_{\alpha}(x) = \cos{(\alpha \pi)} e^{-i\alpha \pi}\sqrt{\frac{2}{\pi x}}\left\{
			(1+T_1^{\alpha}(x))\sin{\left(x - \frac{\alpha \pi}{2} - \frac{\pi}{4}\right)} + T_2^{\alpha}(x)\cos{\left(x - \frac{\alpha \pi}{2} -\frac{\pi}{4}\right)}
			\right\}, \nonumber
		\end{equation}
		\normalsize
		where
		\begin{align}
			1 + T_1^{\alpha}(x) &= \frac{1}{2}\left\{e^{ix}W_{0,\alpha}(2ix) + e^{-ix}W_{0,\alpha}(-2ix)\right\} \nonumber\\
			&\sim 1 +\sum_{l=1}^{\infty}(-1)^l\frac{\left\{\alpha^2  - \frac{1}{4}\right\}\left\{\alpha^2 - \frac{9}{4}\right\} \cdots \left\{\alpha^2 - \left(2l - \frac{1}{2}\right)^2 \right\}}{2^{2l}l!(x)^{2l}}, \nonumber
		\end{align}
		\begin{align}
			T_2^{\alpha}(x) &= \frac{i}{2}\left\{e^{ix}W_{0,\alpha}(2ix) - e^{-ix}W_{0,\alpha}(-2ix)\right\} \nonumber\\
			&\sim \sum_{l=1}^{\infty}(-1)^{l-1}\frac{\left\{\alpha^2  - \frac{1}{4}\right\}\left\{\alpha^2 - \frac{9}{4}\right\} \cdots \left\{\alpha^2 - \left(2l - \frac{3}{2}\right)^2 \right\}}{2^{2l-1}l!(x)^{2l-1}}, \nonumber
		\end{align}
		for large \(|x|\), \(-\pi < {\rm arg}(x) < \pi\).
	\end{prop}

	%%%%%
	%%%%%%%%%%%%%%%%%%%%%%%%%%%%%%%%%%%%%%%%%%%%%%%%%%
	\subsection*{Acknowledgement}
	The author would like to thank Professor Martin Guest for his considerable support. The author would also like to thank the members of the geometry group at Waseda for their helpful comments and discussion. %He is grateful to the anonymous referee for many detailed comments and suggestions.\\

	\bibliography{mybibfile}
	\bibliographystyle{plain}
	
	\em
	\noindent
	Department of Mathematics\newline
	Faculty of Science and Engineering\newline
	Waseda University\newline
	3-4-1 Okubo, Shinjuku, Tokyo 169-8555\newline
	JAPAN

\end{document}